\documentclass{amsart}
\usepackage{amssymb, amsmath, mathrsfs, verbatim, url, bbm,marvosym}
\usepackage[all,cmtip]{xy}

\theoremstyle{plain}
\newtheorem{theorem}{Theorem}
\newtheorem{lemma}[theorem]{Lemma}
\newtheorem{proposition}[theorem]{Proposition}
\newtheorem{corollary}[theorem]{Corollary}

\theoremstyle{definition}
\newtheorem{definition}[theorem]{Definition}
\newtheorem{question}{Question}

\newcommand{\cl}{\mathsf{cl}}
\newcommand{\dom}{\mathsf{dom}}
\newcommand{\Ps}{\mathsf{(s)}}
\newcommand{\Psp}{\mathsf{(s)^+}}
\newcommand{\Po}{\mathsf{PSP(open)}}
\newcommand{\MAc}{\mathsf{MA(countable)}}
\newcommand{\MAsigma}{\mathsf{MA(\sigma\textrm{-}centered)}}
\newcommand{\CH}{\mathsf{CH}}
\newcommand{\MP}{\mathsf{MP}}
\newcommand{\MPp}{\mathsf{MP^+}}
\newcommand{\CB}{\mathsf{CB}}
\newcommand{\CBP}{\mathsf{CBP}}
\newcommand{\CBPp}{\mathsf{CBP^+}}
\newcommand{\CDH}{\mathsf{CDH}}
\newcommand{\RCDH}{\mathsf{RCDH}}
\newcommand{\ZFC}{\mathsf{ZFC}}
\newcommand{\Lim}{\mathsf{Lim}}
\newcommand{\COF}{\mathsf{Cof}}
\newcommand{\FIN}{\mathsf{Fin}}
\newcommand{\Gd}{\mathsf{G_\delta}}
\newcommand{\Aa}{\mathcal{A}}
\newcommand{\BB}{\mathcal{B}}
\newcommand{\Ss}{\mathcal{S}}
\newcommand{\CC}{\mathcal{C}}
\newcommand{\DD}{\mathcal{D}}

\newcommand{\FF}{\mathcal{F}}
\newcommand{\GG}{\mathcal{G}}

\newcommand{\XX}{\mathcal{X}}

\newcommand{\II}{\mathcal{I}}

\newcommand{\UU}{\mathcal{U}}

\newcommand{\PP}{\mathcal{P}}

\newcommand{\PPP}{\mathbb{P}}

\newcommand{\QQQ}{\mathbb{Q}}
\newcommand{\cccc}{\mathfrak{c}}

\newcommand{\dddd}{\mathfrak{d}}
\newcommand{\rrrr}{\mathfrak{r}}
\newcommand{\uuuu}{\mathfrak{u}}
\newcommand{\gggg}{\mathfrak{g}}

\begin{document}

\title{Seven characterizations of non-meager P-filters}

\author{Kenneth Kunen}
\address{Department of Mathematics
\newline\indent University of Wisconsin
\newline\indent 480 Lincoln Drive
\newline\indent Madison, WI 53706, USA}
\email{kunen@math.wisc.edu}
\urladdr{http://www.math.wisc.edu/\~{}kunen/}

\author{Andrea Medini}
\address{Kurt G\"odel Research Center for Mathematical Logic
\newline\indent University of Vienna
\newline\indent W\"ahringer Stra{\ss}e 25
\newline\indent A-1090 Wien, Austria}
\email{andrea.medini@univie.ac.at}
\urladdr{http://www.logic.univie.ac.at/\~{}medinia2/}

\author{Lyubomyr Zdomskyy}
\address{Kurt G\"odel Research Center for Mathematical Logic
\newline\indent University of Vienna
\newline\indent W\"ahringer Stra{\ss}e 25
\newline\indent A-1090 Wien, Austria}
\email{lyubomyr.zdomskyy@univie.ac.at}
\urladdr{http://www.logic.univie.ac.at/\~{}lzdomsky/}

\keywords{Non-meager P-filter, P-point, countable dense homogeneous, completely Baire, Miller property, Cantor-Bendixson property, types of countable dense subsets}

\thanks{The second-listed and third-listed authors were supported by the FWF grant I 1209-N25. The third-listed author would also like to thank the Austrian Academy of Sciences for its generous support through the APART Program.}

\date{October 3, 2014}

\begin{abstract}
We give several topological/combinatorial conditions that, for a filter on
$\omega$, are equivalent to being a non-meager $\mathsf{P}$-filter. In
particular, we show that a filter is countable dense homogeneous if and only if it is a non-meager $\mathsf{P}$-filter. Here, we identify a filter with a subspace of $2^\omega$ through characteristic functions. Along the way, we generalize to non-meager $\mathsf{P}$-filters a result of Miller about $\mathsf{P}$-points, and we employ and give a new proof of results of Marciszewski. We also employ a theorem of Hern\'andez-Guti\'errez and Hru\v{s}\'ak, and answer two questions that they posed. Our result also resolves several issues raised by Medini and Milovich, and proves false one ``theorem'' of theirs. Furthermore, we show that the statement ``Every non-meager filter contains a non-meager $\mathsf{P}$-subfilter'' is independent of $\ZFC$ (more precisely, it is a consequence of $\uuuu<\gggg$ and its negation is a consequence of $\Diamond$). It follows from results of Hru\v{s}\'ak and van Mill that, under $\uuuu<\gggg$, a filter has less than $\cccc$ types of countable dense subsets if and only if it is a non-meager $\mathsf{P}$-filter. In particular, under $\uuuu<\gggg$, there exists an ultrafilter with $\cccc$ types of countable dense subsets. We also show that such an ultrafilter exists under $\MAc$.
\end{abstract}

\maketitle

By \emph{filter} we mean filter on $\omega$, unless we explicitly say otherwise. Furthermore, we assume that $\COF\subseteq\FF$ for every filter $\FF$, where $\COF=\{x\subseteq\omega:|\omega\setminus x|<\omega\}$. We identify every filter with a subspace of $2^\omega$ by identifying every subset of $\omega$ with its characteristic function. In particular, we say that a filter is \emph{non-meager} if it is non-meager as a subset of $2^\omega$. It is well-known that ultrafilters are non-meager (see for example \cite[Section 2]{medinimilovich}). A filter $\FF$ is a \emph{$\mathsf{P}$-filter} if for every countable $\XX\subseteq\FF$ there exists $z\in\FF$ such that $|z\setminus x|<\omega$ for every $x\in\XX$. An ultrafilter that is a $\mathsf{P}$-filter is called a \emph{$\mathsf{P}$-point}. Ketonen showed that $\mathsf{P}$-points (hence non-meager $\mathsf{P}$-filters) exist under $\dddd=\cccc$ (see \cite[Theorem 9.25]{blass}), while by a theorem of Shelah it is consistent that there are no $\mathsf{P}$-points (see \cite[Theorem 4.4.7]{bartoszynskijudah}). On the other hand, the following is a long-standing open problem\footnote{\,It is known, however, that the statement ``There are no non-meager $\mathsf{P}$-filters'' has large cardinal strength (see \cite[Corollary 4.11]{justmathiasprikrysimon} or \cite[Corollary 4.4.15]{bartoszynskijudah}).} (see \cite[Question 0.1]{justmathiasprikrysimon} or \cite[Section 4.4.C]{bartoszynskijudah}).
\begin{question}[Just, Mathias, Prikry, Simon]\label{nonmeagerPquestion}
Is it possible to prove in $\ZFC$ that there exists a non-meager $\mathsf{P}$-filter?
\end{question}
\noindent Even though Question \ref{nonmeagerPquestion} was not the original motivation for our research, we hope that the results obtained here might shed some light on it.

By \emph{space} we mean separable metrizable topological space. By
\emph{countable} we mean at most countable. Recall that a space $X$ is \emph{completely Baire}\footnote{\,Some authors use ``hereditarily Baire'' or even ``hereditary Baire'' instead of ``completely Baire'' (see for example \cite{marciszewski}).} (briefly, $\CB$) if every closed subspace of $X$ is a Baire space. Recall that a space $X$ is \emph{countable dense homogeneous} (briefly, $\CDH$) if for every pair $(D,E)$ of countable dense subsets of $X$ there exists a homeomorphism $h:X\longrightarrow X$ such that $h[D]=E$. See \cite[Sections 14-16]{arkhangelskiivanmill} for a nice introduction to countable dense homogeneity.

Identifying filters with subsets of $2^\omega$ is certainly not a new approach to the study of filters. For example, much is known about the delicate interplay between category and measure in this context (see \cite[Chapter 4]{bartoszynskijudah}). However, in this article, we will focus on properties of a different flavor, such as being $\CB$ or being $\CDH$ in the subspace topology, and investigate their relationship with the combinatorial property of being a non-meager\footnote{\,That non-meager filters can be characterized combinatorially is a well-known result of Talagrand (see Theorem \ref{talagrandcriterion}).} $\mathsf{P}$-filter. In fact, as one might suspect from the title, our main result (Theorem \ref{main}) shows that being $\CB$, being $\CDH$, and several other properties (that are not equivalent for arbitrary spaces) become equivalent (to being a non-meager $\mathsf{P}$-filter) when the spaces under consideration are filters. The following characterization was already known (see \cite[Theorem 1.2]{marciszewski}), and the proof of the right-to-left direction is used in the proof of Theorem \ref{main}.
\begin{theorem}[Marciszewski]\label{Pequalscb}
Let $\FF$ be a filter. Then $\FF$ is a non-meager $\mathsf{P}$-filter if and only if $\FF$ is $\CB$.
\end{theorem}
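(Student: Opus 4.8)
The plan is to prove the two implications separately, treating the left-to-right implication (non-meager $\mathsf P$-filter $\Rightarrow\CB$) as the substantial one.

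For the easy half I would first record that $\CB$ implies non-meager. Since $\FF$ is itself a closed subspace of $\FF$, being $\CB$ forces $\FF$ to be a Baire space, so it suffices to show that a meager filter is not Baire. Assuming $\FF$ is meager, I apply Talagrand's criterion (Theorem \ref{talagrandcriterion}) to obtain a partition of $\omega$ into finite intervals $\{I_n:n\in\omega\}$ such that every $x\in\FF$ contains $I_n$ for all but finitely many $n$. Setting $\FF_k=\{x\in\FF:I_n\subseteq x\text{ for all }n\geq k\}$ gives $\FF=\bigcup_k\FF_k$ with each $\FF_k$ closed in $\FF$. The point is that each $\FF_k$ is nowhere dense: given $s\in 2^{<\omega}$ with $\FF\cap[s]\neq\emptyset$, pick $x\in\FF\cap[s]$ and an interval $I_n$ with $n\geq k$ and $\min I_n>\max\dom(s)$; then $x\setminus I_n=x\cap(\omega\setminus I_n)\in\FF$ because $\omega\setminus I_n\in\COF\subseteq\FF$, this set still extends $s$, and it lies outside $\FF_k$. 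Hence $\FF$ is meager in itself and not Baire. (Conversely, finite-modification homogeneity of $\FF$ shows that a non-meager filter is Baire, so for filters the two notions coincide; only the direction proved here is needed below.)

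For $\CB\Rightarrow\mathsf P$-filter I would argue contrapositively and build a closed copy of $\QQQ$ inside $\FF$. Suppose $\FF$ is not a $\mathsf P$-filter, witnessed by a decreasing sequence $x_0\supseteq x_1\supseteq\cdots$ in $\FF$ with no pseudointersection in $\FF$; writing $a_j=x_j\setminus x_{j+1}$, this says precisely that no $z\in\FF$ satisfies $z\subseteq x_0$ and $z\cap a_j$ finite for every $j$. After thinning I may assume each $a_j$ is infinite. I then construct points $z_\sigma\in\FF$ indexed by $\sigma\in\omega^{<\omega}$ of the form $z_\sigma=x_0\setminus E_\sigma$ with $E_\sigma$ finite (so $z_\sigma\in\FF$ by invariance of filters under finite modifications), arranged so that $E_\sigma\subseteq E_{\sigma^\frown n}$, so that $\min(E_{\sigma^\frown n}\setminus E_\sigma)\to\infty$ as $n\to\infty$ (forcing $z_{\sigma^\frown n}\to z_\sigma$, i.e.\ crowdedness), and so that along every branch $\beta\in\omega^\omega$ the set $E_\beta=\bigcup_k E_{\beta\restriction k}$ removes all but finitely many points of each block $a_j$. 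Then each branch limit $z_\beta=x_0\setminus E_\beta$ satisfies $z_\beta\subseteq x_0$ and $z_\beta\cap a_j$ finite for all $j$, hence $z_\beta\notin\FF$. With the usual Cantor-scheme separation between distinct subtrees, the only limit points of $D=\{z_\sigma:\sigma\in\omega^{<\omega}\}$ that could lie in $\FF$ are the $z_\sigma$ themselves, so $D$ is a countable, crowded, closed-in-$\FF$ subspace, i.e.\ a closed copy of $\QQQ$. Since $\QQQ$ is not Baire, $\FF$ is not $\CB$.

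The hard direction is non-meager $\mathsf P$-filter $\Rightarrow\CB$, and here I expect the main obstacle. Let $C=\FF\cap K$ be closed in $\FF$ (with $K$ closed in $2^\omega$); to see that $C$ is Baire I would run a Banach--Mazur-style fusion: given a nonempty $C\cap[s_0]$ and dense open sets $U_n\subseteq C$, build $s_0\subseteq s_1\subseteq\cdots$ with witnesses $y_k\in C$ extending $s_k$ and $\emptyset\neq C\cap[s_{k+1}]\subseteq U_{k+1}$. The single point $z=\bigcup_k s_k$ then lies in $K$ and in every $U_n$, so the entire difficulty is to steer the construction so that $z\in\FF$ (whence $z\in C$). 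The naive idea---take a pseudointersection $w\in\FF$ of the $y_k$ and observe that $z$ agrees with $y_k$ below $\dom(s_k)$, so $z\supseteq^{*}w$---fails, because $w\subseteq^{*}y_k$ only gives a finite error set for each $k$ and these errors need not be confined below $\dom(s_k)$; this lack of uniform control is exactly what a bare $\mathsf P$-filter cannot supply and what non-meagerness must repair. The plan, therefore, is to replace the plain $\mathsf P$-filter property by a Talagrand-type combinatorial characterization of non-meager $\mathsf P$-filters: for a decreasing sequence $(y_k)$ in $\FF$ and any partition of $\omega$ into finite intervals, one obtains $w\in\FF$ together with an interval-wise assignment of levels tending to infinity such that $w$ is contained in the appropriate $y_k$ on each interval (an ``interval-controlled'' pseudointersection), with non-meagerness guaranteeing in addition that $w$ is spread across infinitely many intervals. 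Feeding the fusion intervals $[\dom(s_k),\dom(s_{k+1}))$ into this characterization yields a uniform containment $w\setminus\dom(s_K)\subseteq z$ for some $K$, hence $z\supseteq^{*}w\in\FF$ and $z\in\FF$ by upward closure. Establishing this interval-controlled diagonalization and interleaving it correctly with the density requirements $C\cap[s_{k+1}]\subseteq U_{k+1}$ is the crux of the whole argument.
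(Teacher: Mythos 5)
Your proposal does not close either substantive direction. The main gap is the ``hard'' implication (non-meager $\mathsf{P}$-filter $\Rightarrow$ $\CB$), where you explicitly leave the crux unestablished: the ``interval-controlled pseudointersection'' is never formulated or proved, and even as a plan the fusion is circular. Talagrand-type non-meagerness (Theorem \ref{talagrandcriterion}) produces the control set $w$ only \emph{after} the full sequence $(y_k)$ and the full interval partition $[\dom(s_k),\dom(s_{k+1}))$ are known, but your recursion needs to commit to $s_{k+1}$ (hence to which portion of $w$ enters $z$) at stage $k$; nothing in the sketch breaks this circularity. The paper avoids fusion entirely: by Hurewicz (Theorem \ref{hurewicz}) it suffices to rule out closed copies of $\QQQ$ in $\FF$; given such a copy $Q$, the $\mathsf{P}$-property yields a pseudointersection of $Q$ in $\FF$, and Lemma \ref{generalizedmiller} first builds the entire tree of points $q_s$ and only \emph{then} invokes Talagrand once to select a branch $\phi$, yielding a crowded $Q'\subseteq Q$ with $Q'\subseteq z\!\uparrow$ for some $z\in\FF$. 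Since $z\!\uparrow$ is compact and contained in $\FF$, $\cl(Q')$ is a crowded compact subset of $Q\approx\QQQ$, a contradiction. This ``build first, select a branch afterwards'' structure is exactly the after-the-fact use of non-meagerness your single-pass fusion lacks, so the missing step is the heart of the theorem, not a routine interleaving.

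There is also a genuine flaw in your $\CB\Rightarrow\mathsf{P}$-filter construction: the requirements on the $E_\sigma$ are jointly unsatisfiable. Since each $E_\sigma$ is finite and $\min(E_{\sigma^\frown n}\setminus E_\sigma)\to\infty$, an adversarial branch $\beta$ can at each step $k$ choose $\beta(k)$ so large that the floor $H_k$ of all future removals exceeds the $k$-th element of $a_0\setminus E_{\beta\restriction k}$; these points of $a_0$ lie below every later chunk and are never removed, so $z_\beta\cap a_0$ is infinite, $z_\beta$ is not pseudointersection-shaped, and you lose all control of whether $z_\beta\in\FF$ --- closedness of $D$ in $\FF$ fails. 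The repair is to remove \emph{tails} of blocks rather than finite chunks (an infinite, upward-closed removal within each block can start arbitrarily high without creating gaps), and that is precisely the paper's construction: reindex so that $c_n=\{n\}\times\omega$ and take $Q$ to be the set of $x\in\FF$ closed under $(i,j)\mapsto(k,j)$ for $k\geq i$ and $(i,j)\mapsto(i,k)$ for $k\leq j$; any limit point of this $Q$ lying in $\FF$ would be a pseudointersection of the $x_n$ in $\FF$, so $Q$ is closed in $\FF$, countable, and crowded. Finally, a minor slip in your easy half: Talagrand gives, for meager $\FF$, a partition $\{I_n\}$ such that every $x\in\FF$ \emph{meets} (not contains) all but finitely many $I_n$; your decomposition and nowhere-density argument go through verbatim with $\FF_k=\{x\in\FF:x\cap I_n\neq\varnothing\text{ for all }n\geq k\}$, since the witness $x\setminus I_n$ misses $I_n$ entirely.
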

 
Apart from the ``naturalistic'' interest of this line of research, it is worth noting that non-meager filters can be a fruitful source of counterexamples in general topology. In fact, a non-meager filter $\FF$ is automatically a Baire topological group that is neither analytic nor coanalytic (by the arguments in \cite[Section 2]{medinimilovich}), while, by constructing $\FF$ carefully, one can ensure that it possesses further topological properties. For example, Medini and Milovich showed that under $\MAc$ there exists a $\CDH$ ultrafilter (see \cite[Theorem 21]{medinimilovich}), and used the same methods to answer a question of Hru\v{s}\'ak and Zamora Avil\'es.\footnote{\,More precisely, they showed that under $\MAc$ there exists an ultrafilter $\UU$ such that $\UU^\omega$ is $\CDH$ (see \cite[Theorem 24]{medinimilovich}). This gives a (consistent) example of a non-Polish subspace $X$ of $2^\omega$ such that $X^\omega$ is $\CDH$, which is what \cite[Question 3.2]{hrusakzamoraaviles} asks for.} As another example, Repov\v{s}, Zdomskyy and Zhang recently constructed a non-meager filter $\FF$ that is not $\CDH$ (see \cite[Theorem 1]{repovszdomskyyzhang}), thus strenghtening a result of van Mill.\footnote{\,Both $\FF$ and the example $X$ of van Mill (see \cite{vanmills}) are strongly locally homogeneous Baire spaces that are not $\CDH$. On the other hand, $\FF$ is a topological group, while $X$ is merely homogeneous (see the discussion in \cite[page 1323]{medinimilovich}).} Both results can now be viewed as corollaries of Theorem \ref{main}.

The following result (see \cite[Theorem 1.6]{hernandezgutierrezhrusak}) improves on the example of Medini and Milovich mentioned above, and its proof is used in the proof of Theorem \ref{main}.
\begin{theorem}[Hern\'andez-Guti\'errez, Hru\v{s}\'ak]\label{Pimpliescdh}
If $\FF$ is a non-meager $\mathsf{P}$-filter then $\FF$ is $\CDH$.
\end{theorem}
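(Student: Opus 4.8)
The plan is to reduce countable dense homogeneity of $\FF$ to a statement about the ambient Cantor space and then to build the required homeomorphism by a Knaster--Reichbach-style back-and-forth that is forced to respect $\FF$. First I would record the basic reduction: since $D$ and $E$ are dense in $\FF$ and $\FF$ is dense in $2^\omega$ (because $\COF\subseteq\FF$), both $D$ and $E$ are in fact countable dense subsets of $2^\omega$. Thus the problem is not to match them at all --- the compact space $2^\omega$ is already $\CDH$ --- but to produce a self-homeomorphism $H$ of $2^\omega$ with $H[D]=E$ that \emph{additionally} satisfies $H[\FF]=\FF$, for then $h=H\restriction\FF$ witnesses $\CDH$ of $\FF$. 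So all the work is in smuggling the extra constraint $H[\FF]=\FF$ into the classical construction of a homeomorphism of $2^\omega$ carrying one countable dense set onto another.

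Next I would set up the back-and-forth. Enumerate $D=\{d_i:i\in\omega\}$ and $E=\{e_i:i\in\omega\}$, and build $H$ as the limit of finite level-preserving bijections between finer and finer clopen partitions of $2^\omega$, maintaining at each stage a finite partial matching of $D$ with $E$. At even stages I capture $d_i$, at odd stages $e_i$, and in between I refine so that diameters tend to $0$; the limit is then automatically a homeomorphism of the compact space $2^\omega$. The novelty is that every clopen piece $U$ must be split and matched in an \emph{$\FF$-respecting} way, so that the partial map always carries $\FF$-points to $\FF$-points. Keeping the pieces nonempty and crowded is free, since $\COF$ is dense in every relatively clopen subset of $\FF$; the genuine difficulty is to arrange that the two clopen schemes so constructed have branch spaces equal to $\FF$ on each side rather than to its closure $2^\omega$. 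This is exactly where the hypothesis enters: non-meagerness, through Talagrand's criterion (Theorem \ref{talagrandcriterion}), controls which infinite branches (points of $2^\omega$) actually lie in $\FF$ in terms of how the filter meets or avoids the coordinate blocks of the partitions, while the $\mathsf{P}$-filter property supplies a fusion --- taking pseudo-intersections across the countably many refinement steps --- that lets all these local $\FF$-preservation requirements be met simultaneously, so that a branch lands in $\FF$ on the domain side precisely when its $H$-image lands in $\FF$ on the range side.

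I expect the main obstacle to be the verification that the recursion can always be continued while keeping the correspondence \emph{total} on $\FF$, i.e. that every branch of the range-side scheme that lies in $\FF$ is the $H$-image of a domain-side branch in $\FF$, and conversely. Reconciling the global, purely combinatorial condition ``$x\in\FF$'' with the purely local clopen bijections of the back-and-forth is the crux, and it is fundamentally a completeness phenomenon: without some form of completeness the constructed map could collapse $\FF$ onto a proper, meager-in-itself subset and the ``forth'' or ``back'' step would stall. Here I would invoke Theorem \ref{Pequalscb}, which tells us that a non-meager $\mathsf{P}$-filter is $\CB$; complete Baireness of $\FF$ (and of each of its clopen pieces, which are homeomorphic to $\FF$) is what guarantees that the back-and-forth never leaks out of $\FF$ and that both inclusions $H[\FF]\subseteq\FF$ and $H^{-1}[\FF]\subseteq\FF$ survive in the limit. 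Once $H[\FF]=\FF$ and $H[D]=E$ are both secured, the restriction $h=H\restriction\FF$ is the desired homeomorphism and $\FF$ is $\CDH$.
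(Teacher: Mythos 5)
Your global strategy is the right one, and in fact it matches the proof that this paper cites rather than reproves: Theorem \ref{Pimpliescdh} is quoted from Hern\'andez-Guti\'errez and Hru\v{s}\'ak, and their argument does exactly what your first paragraph proposes --- it builds a self-homeomorphism $H$ of $2^\omega$ with $H[D]=E$ and $H[\FF]=\FF$, thereby establishing the stronger property $\RCDH$; this is precisely how the implication $(\ref{nonmeagerPfilter})\rightarrow(\ref{cdhp})$ of Theorem \ref{main} is obtained. The reduction to the ambient problem and the back-and-forth scaffolding (finite clopen matchings, capturing $d_i$ and $e_i$ alternately, diameters tending to $0$) are all fine.

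The genuine gap is at the crux you yourself identify --- securing $H[\FF]=\FF$ in the limit --- where the two tools you invoke do not do the advertised work. First, Talagrand's criterion (Theorem \ref{talagrandcriterion}) does not ``control which infinite branches lie in $\FF$'': membership of an individual $x\in 2^\omega$ in $\FF$ is a global property not determined by how $x$ meets the blocks of any interval partition, so the claimed local $\FF$-preservation requirements are not even well-posed. Second, appealing to Theorem \ref{Pequalscb} is a non sequitur: complete Baireness asserts that closed subspaces are Baire, and it provides no mechanism whatsoever for certifying that the image of a single point under the limit map lands in (or out of) $\FF$; your sketch contains no argument for either inclusion $H[\FF]\subseteq\FF$ or $H^{-1}[\FF]\subseteq\FF$. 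What the actual proof supplies, and what is missing here, is an explicit combinatorial invariant carried through the recursion: using the $\mathsf{P}$-property, fix a single $z\in\FF$ with $z\subseteq^{\ast}x$ for every $x\in D\cup E$ (so all points of $D\cup E$ are eventually $1$ on $z$); using non-meagerness via Theorem \ref{talagrandcriterion}, thin $z$ inside $\FF$ and choose the interval structure so that every correction the back-and-forth ever makes is pushed off $z$, i.e.\ so that the limit satisfies $H(x)\bigtriangleup x\subseteq^{\ast}\omega\setminus z$ for \emph{every} $x\in 2^\omega$. Then $H(x)\supseteq^{\ast}x\cap z$ and $H^{-1}(y)\supseteq^{\ast}y\cap z$, and since $\FF$ is closed under finite intersections and almost-supersets, both inclusions follow at once. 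That this invariant is where all the content lives is shown by the meager $\mathsf{P}$-filter $\FF=\COF$: your sketch as written runs verbatim for it, yet $\COF$, being countable and crowded, is not $\CDH$; and indeed for $\COF$ the invariant is unattainable, since every element of $\COF$ meets all but finitely many blocks of any interval partition, so corrections cannot be pushed off any $z\in\COF$. A correct proof must visibly break at this point for $\COF$, and yours never does.
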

\noindent The article \cite{medinimilovich} also contains the claim that, under $\MAc$, there exists a $\CDH$ ultrafilter that is not a $\mathsf{P}$-point (see \cite[Theorem 41]{medinimilovich}). Unfortunately, the proof is wrong: \cite[Lemma 42]{medinimilovich} is correct, but it is easy to realize that a stronger lemma is needed. In fact, as Theorem \ref{main} shows, the claim itself is false.

An important step towards our main result is achieved in Section 2, where we generalize to non-meager $\mathsf{P}$-filters a result of Miller about $\mathsf{P}$-points. Inspired by his result, we give an explicit definition of a topological property (the Miller property) which seems to be of independent interest. This will allow us to give a new, more systematic proof of the left-to-right direction of Theorem \ref{Pequalscb}. Furthermore, the Miller property will be the key to proving that every $\CDH$ filter must be a non-meager $\mathsf{P}$-filter.

In Section 3, we give the seven characterizations promised in the title and use them to answer several questions from the literature. Inspired by the classical Cantor-Bendixson derivative, we also introduce a topological property (the Cantor-Bendixson property) which seems to be of independent interest. By Theorem \ref{main}, all the properties that we mentioned so far (and stronger versions of some of them) are equivalent for filters. It would be rather silly if some of these properties were actually equivalent for arbitrary spaces. By giving suitable counterexamples, we show that this is not the case.

At this point, it seems natural to investigate whether we can say more about the number of types of countable dense subsets of a filter. Recall that the \emph{type} of a countable dense subset $D$ of a space $X$ is $\{h[D]:h\text{ is a homeomorphism of }X\}$.
In particular, a space is $\CDH$ if and only if it has exactly $1$ type of countable dense subsets. Also notice that the maximum possible number of types of countable dense subsets of a space is $\cccc$. See \cite{hrusakvanmill} for more on this topic.

In Section 6, we show that it is consistent that every filter has either $1$ or $\cccc$ types of countable dense sets. More precisely, under the assumption $\uuuu<\gggg$, a filter has less than $\cccc$ types of countable dense subsets if and only if it is a non-meager $\mathsf{P}$-filter (see Theorem \ref{anotherequivalent}). To achieve this, we employ techniques of Hru\v{s}\'ak and van Mill (see Section 4) plus the fact that, under $\uuuu<\gggg$, every non-meager filter has a non-meager $\mathsf{P}$-subfilter (see Section 5). In Section 8, assuming $\Diamond$, we construct an ultrafilter with no non-meager $\mathsf{P}$-subfilters, thus showing that the statement ``Every non-meager filter contains a non-meager $\mathsf{P}$-subfilter'' is independent of $\ZFC$.

While the existence in $\ZFC$ of an ultrafilter that is not $\CDH$ follows easily from Theorem \ref{main} (see Corollary \ref{notcdhzfc}), we do not know whether it is possible to construct in $\ZFC$ an ultrafilter (or a non-meager filter) with $\cccc$ types of countable dense subsets (see Question \ref{ctypesquestion}). It follows from our consistent characterization that such an ultrafilter exists under $\uuuu<\gggg$ (see Corollary \ref{ctypesunderulessg}). In Section 7, we show that such an ultrafilter also exists under $\MAc$ (see Corollary \ref{ctypesundermac}).

\section{More preliminaries}

Our reference for general topology is \cite{vanmilli}. For notions related to cardinal invariants, we refer to \cite{blass}. For all other set-theoretic notions, we refer to \cite{kunen}.

Recall that a space is \emph{crowded} if it is non-empty and it has no isolated points. We write $X\approx Y$ to mean that the spaces $X$ and $Y$ are homeomorphic. Given spaces $X$ and $Z$, a \emph{copy} of $Z$ in $X$ is a subspace $Y$ of $X$ such that $Y\approx Z$. We say that a subspace $X$ of $2^\omega$ is \emph{relatively countable dense homogeneous} (briefly, $\RCDH$) if for every pair $(D,E)$ of countable dense subsets of $X$ there exists a homeomorphism $h:2^\omega\longrightarrow 2^\omega$ such that $h[X]=X$ and $h[D]=E$. We will need the following classical result (see \cite[Corollary 1.9.13]{vanmilli}) on several occasions.
\begin{theorem}[Hurewicz]\label{hurewicz}
A space is $\CB$ if and only if it does not contain any closed copy of $\QQQ$.
\end{theorem}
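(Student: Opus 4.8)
The plan is to prove both implications by contraposition, the forward one being essentially immediate and the reverse one carrying all the weight. For the easy direction, suppose $X$ contains a closed copy $C$ of $\QQQ$. Since $\QQQ$ is countable and crowded, it is the union of its singletons, each of which is nowhere dense; thus $\QQQ$ is meager in itself and hence not Baire, so $C$ is a closed non-Baire subspace of $X$, witnessing that $X$ is not $\CB$.

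For the substantial direction I would assume $X$ is not $\CB$ and fix a closed subspace $F_0$ that fails to be Baire; inside $F_0$ there is then a nonempty open set $W$ that is meager in $F_0$. The first move is to pass to a cleaner object: the closure $F=\overline{W}$ (taken in $X$) is closed in $X$, and, writing the meagerness of $W$ through countably many nowhere dense sets and adjoining the nowhere dense boundary $F\setminus W$, a routine check (using that $W$ is dense and open in $F$) shows that $F$ is meager in itself. So I may write $F=\bigcup_n C_n$ with each $C_n$ closed and nowhere dense in $F$ and $C_n\subseteq C_{n+1}$. Note that $F$ has no isolated point, since an isolated point would be a nonempty open set contained in some $C_n$; hence $F$ is a nonempty crowded metrizable space. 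The goal is now reduced to producing a subset $D\subseteq F$ that is countable, crowded and closed in $F$: by Sierpi\'nski's characterization of the rationals such a $D$ is homeomorphic to $\QQQ$, and since $F$ is closed in $X$, $D$ is a closed copy of $\QQQ$ in $X$.

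I would build $D=\{x_s:s\in\omega^{<\omega}\}$ together with a Cantor scheme of nonempty open sets $V_s$ by recursion on $|s|$, fixing a compatible metric on $X$ and maintaining: (1) $x_s\in V_s$, $\overline{V_{s^\frown m}}\subseteq V_s$, and the closures $\overline{V_{s^\frown m}}$ ($m\in\omega$) are pairwise disjoint and omit $x_s$; (2) $x_{s^\frown m}\to x_s$ and $\mathrm{diam}(V_{s^\frown m})\to 0$ as $m\to\infty$; and, crucially, (3) $\overline{V_s}\cap C_n=\emptyset$ whenever $n<|s|$. This is where the hypotheses are spent. At the recursion step, given $V_s$, the set $V_s\setminus C_{|s|}$ is nonempty and open because $C_{|s|}$ is nowhere dense, and it is crowded as an open subspace of the crowded space $F$; so I may choose $x_s$ together with a sequence of distinct points converging to it inside $V_s\setminus C_{|s|}$, and then small neighbourhoods of these points yield the $V_{s^\frown m}$ satisfying (1) and (2), the disjointness of their closures from $C_{|s|}$ being possible since $C_{|s|}$ is closed and the chosen points avoid it. This gives (3) at the next level.

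It then remains to verify that $D$ is countable (clear), crowded (each $x_s$ is the limit of the distinct points $x_{s^\frown m}\in D$), and closed in $F$; the last is the main obstacle and the only place where condition (3) is used. Here I would argue by contradiction: if $x_{s_k}\to z\in F$ with $z\notin D$, then, since (2) forces the children of any fixed node to accumulate only at that node and $z$ differs from every $x_s$, a level-by-level pigeonhole argument produces an infinite branch $\beta\in\omega^\omega$ with $z\in\overline{V_{\beta\restriction \ell}}$ for every $\ell$. But then (3) gives $z\notin C_n$ for all $n$, contradicting $z\in F=\bigcup_n C_n$. Hence every limit point of $D$ in $F$ already belongs to $D$, so $D$ is closed, completing the construction and the proof.
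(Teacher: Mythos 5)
Your proof is correct, but note that the paper itself does not prove this statement at all: it is quoted as a classical theorem with a citation to \cite[Corollary 1.9.13]{vanmilli}, so what you have supplied is a self-contained argument where the authors rely on an external reference. Your route is the standard one and all the essential ideas are present and sound: the easy direction via $\QQQ$ being meager in itself; the reduction of the hard direction to a closed subspace $F=\overline{W}$ of $X$ that is crowded and meager in itself (your check that adjoining the nowhere dense boundary $F\setminus W$ makes $F=\bigcup_n C_n$ with $C_n$ closed nowhere dense is right); the Cantor scheme with the level-wise condition $\overline{V_s}\cap C_n=\varnothing$ for $n<|s|$, which is exactly the device that forces closedness, since any limit point of $D$ outside $D$ determines a branch and hence escapes every $C_n$, contradicting $F=\bigcup_n C_n$; and finally Sierpi\'nski's characterization of $\QQQ$ as the unique countable crowded metrizable space. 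Your pigeonhole extraction of the branch is also correct: if no single child subtree of a node $u$ captures infinitely many terms of the sequence, one extracts terms from children with indices tending to infinity, which by condition (2) converge to $x_u$, forcing $z=x_u\in D$. One presentational wrinkle deserves mention: at stage $s$ you write that you ``choose $x_s$'' inside $V_s\setminus C_{|s|}$, yet on the natural reading $x_s$ was already fixed at the previous stage (as a member of the sequence converging to its parent) and may well lie in $C_{|s|}$. This is harmless and fixable in one line: since $C_{|s|}$ is closed and nowhere dense, $V_s\setminus C_{|s|}$ is dense open in $V_s$, so a sequence of distinct points of $V_s\setminus C_{|s|}$ converging to the already-fixed $x_s$ still exists, and conditions (1)--(3) survive verbatim --- only the children's closures, never $x_s$ itself, need to avoid $C_{|s|}$. (Alternatively, read the points chosen at stage $s$ as provisional anchors and choose $x_{s^\frown m}$ afterwards inside $V_{s^\frown m}$; condition (2) then holds automatically because $\mathrm{diam}(V_{s^\frown m})\to 0$.) With that clarification the argument is a complete proof, arguably more elementary and more directly reusable than the citation, since it makes explicit that not-$\CB$ yields the closed copy of $\QQQ$ inside any closed subspace that is meager in itself --- the same phenomenon the paper exploits concretely in the proof of Theorem \ref{main}, where the countable crowded closed witness $Q$ is exhibited by hand.
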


We denote by $\PP(\omega)$ the collection of all subsets of $\omega$. Whenever $\XX\subseteq\PP(\omega)$, we freely identify $\XX$ with the subspace $\XX\subseteq 2^\omega$ consisting of the characteristic functions of the elements of $\XX$. Let $\FIN=\{x\subseteq\omega:|x|<\omega\}$. Given $z\subseteq\omega$, let $z\!\uparrow\,\,=\{x\subseteq\omega:z\subseteq x\}$ and $z\!\downarrow\,\,=\{x\subseteq\omega:x\subseteq z\}$.

Given $x,y\subseteq\omega$, we will write $x\subseteq^\ast y$ to mean $|x\setminus y|<\omega$. Given $\XX\subseteq\PP(\omega)$, we will say that $z\subseteq\omega$ is a \emph{pseudointersection} of $\XX$ if $z$ is infinite and $z\subseteq^\ast x$ for all $x\in\XX$. In particular, a filter $\FF$ is a $\mathsf{P}$-filter if and only if every countable $\XX\subseteq\FF$ has a pseudointersection in $\FF$. A \emph{subfilter} of a filter $\FF$ is a filter $\GG$ such that $\GG\subseteq\FF$. A subfilter that is a $\mathsf{P}$-filter is called a \emph{$\mathsf{P}$-subfilter}.

Recall that $\XX\subseteq\PP(\omega)$ has the \emph{finite intersection property} if $\bigcap F$ is infinite for every non-empty $F\in[\XX]^{<\omega}$. Given $\XX\subseteq\PP(\omega)$ with the finite intersection property, the \emph{filter generated} by $\XX$ is
$$
\FF=\COF\cup\{x\subseteq\omega:\bigcap F\subseteq^\ast x\text{ for some non-empty }F\in[\XX]^{<\omega}\}.
$$
It is easy to check that $\FF$ is the smallest filter such that $\XX\subseteq\FF$. Given $x\subseteq\omega$, define $x^0=\omega\setminus x$ and $x^1=x$. Recall that $\Aa\subseteq\PP(\omega)$ is an \emph{independent family} if $\{x^{\nu(x)}:x\in\Aa\}$ has the finite intersection property for every $\nu:\Aa\longrightarrow 2$.

The following well-known characterization of non-meager filters (see \cite[Proposition 9.4]{blass}) originally appeared as part of \cite[Th\'eor\`eme 21]{talagrand}, and it will prove very useful for our purposes.
\begin{theorem}[Talagrand]\label{talagrandcriterion}
For a filter $\FF$, the following conditions are equivalent.
\begin{itemize}
\item $\FF$ is non-meager.
\item For every partition $\Pi$ of $\omega$ into finite sets there exists $x\in\FF$ such that $x\cap I=\varnothing$ for infinitely many $I\in\Pi$.
\end{itemize}
\end{theorem}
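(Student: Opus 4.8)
The plan is to prove the two implications separately, in each case arguing by contraposition, while freely using that $\FF$ is upward closed: if $x\in\FF$ and $x\subseteq x'$ then $x'\in\FF$. This closure property is the structural feature that makes the combinatorial condition available, and it is the heart of the argument.

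For the first implication I show that the failure of the combinatorial condition forces $\FF$ to be meager. So I fix a partition $\Pi=\{I_n:n\in\omega\}$ of $\omega$ into finite sets such that every $x\in\FF$ satisfies $x\cap I_n\neq\varnothing$ for all but finitely many $n$. For each $m$ I set $N_m=\bigcap_{n\geq m}\{x\in 2^\omega:x\cap I_n\neq\varnothing\}$. Since each $I_n$ is finite, the condition $x\cap I_n\neq\varnothing$ is clopen, so every $N_m$ is closed. Moreover $N_m$ is nowhere dense: given a basic clopen set determined by finitely many coordinates, only finitely many pieces can meet that coordinate set, so I may pick some $n\geq m$ with $I_n$ disjoint from it, and then setting $x$ to be $0$ on all of $I_n$ yields a point of the basic clopen set lying outside $N_m$. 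Finally $\FF\subseteq\bigcup_m N_m$ by the choice of $\Pi$, so $\FF$ is meager, as desired.

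For the converse I show that if $\FF$ is meager then the combinatorial condition fails. The main tool is the classical combinatorial description of meager subsets of $2^\omega$: a set $A\subseteq 2^\omega$ is meager if and only if there are a partition $\{I_n:n\in\omega\}$ of $\omega$ into finite intervals and a point $z\in 2^\omega$ such that every $x\in A$ satisfies $x\upharpoonright I_n=z\upharpoonright I_n$ for at most finitely many $n$. (This follows from writing $A$ as an increasing union of closed nowhere dense sets $F_k$ and, at stage $k$, using nowhere density to choose an interval $I_k$ together with a pattern $z\upharpoonright I_k$ that escapes $F_k$ from every possible configuration on the previously used coordinates.) Applying this with $A=\FF$, I fix such a partition $\Pi=\{I_n\}$ and such a $z$, and claim that $\Pi$ witnesses the failure of the combinatorial condition, i.e.\ that every $x\in\FF$ meets all but finitely many pieces. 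Suppose not: then some $x\in\FF$ has $x\cap I_n=\varnothing$ for all $n$ in some infinite set $S$. Put $x'=x\cup\bigcup_{n\in S}(z\cap I_n)$. Since $x\subseteq x'$ and $\FF$ is upward closed, $x'\in\FF$; but for every $n\in S$ we have $x\cap I_n=\varnothing$, hence $x'\cap I_n=z\cap I_n$, that is $x'\upharpoonright I_n=z\upharpoonright I_n$. As $S$ is infinite this contradicts the defining property of $z$, proving the claim.

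The main obstacle is this converse direction, and specifically the tension between the cover of $\FF$ by closed nowhere dense sets and the requirement that the witnessing set $x'$ still belong to $\FF$. Nowhere density only lets one escape each $F_k$ by prescribing some bit pattern on a block, not by emptying the block; it is exactly the upward closure of $\FF$ that converts this ``eventually different on blocks'' information into the desired ``meets cofinitely many blocks'' conclusion, since it permits filling the empty blocks of a hypothetical bad $x$ with the corresponding bits of $z$ without leaving $\FF$. Verifying the meager-set characterization itself (the bookkeeping over the finitely many configurations on earlier coordinates at each stage) is routine and is where the only genuine computation lies.
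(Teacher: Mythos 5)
Your proof is correct. Note that the paper itself contains no proof of this theorem: it cites Talagrand's original article and \cite[Proposition 9.4]{blass}, and your argument is essentially the standard one found in those sources. The easy direction (failure of the combinatorial condition implies meagerness) via the closed nowhere dense sets $N_m$ is fine, and your converse correctly combines the classical ``chopped real'' characterization of meager subsets of $2^\omega$ (a partition into finite intervals together with a pattern $z$ that elements of the set match on only finitely many blocks) with the upward closure of $\FF$: filling the empty blocks of a hypothetical bad $x\in\FF$ with the corresponding bits of $z$ produces $x'\in\FF$ with $x'\upharpoonright I_n=z\upharpoonright I_n$ for infinitely many $n$, a contradiction. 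All the small verifications you rely on (clopenness of the conditions defining $N_m$, the fact that only finitely many blocks meet a finite coordinate set, and the stage-by-stage bookkeeping over configurations in the meagerness characterization) are standard and check out.
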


\section{Strengthening a result of Miller}

Miller showed that $\mathsf{P}$-points are preserved in rational perfect set forcing extensions (see \cite[Theorem 3.1]{millerf}), and remarked that his proof can be adapted to obtain Theorem \ref{millerPpoint}.

\begin{definition}
A space $X$ has the \emph{Miller property} (briefly, $\MP$) if for every countable crowded subspace $Q$ of $X$ there exists a copy $K$ of $2^\omega$ in $X$ such that $K\cap Q$ is crowded. A subspace $X$ of $2^\omega$ has the \emph{strong Miller property} (briefly, $\MPp$) if for every countable crowded subspace $Q$ of $X$ there exists a copy $K$ of $2^\omega$ in $X$ such that $K\cap Q$ is crowded and $K\subseteq z\!\uparrow$ for some $z\in X$.
\end{definition}
\noindent Notice that the $\MPp$ implies the $\MP$ for every subspace of $2^\omega$. See the next section for a counterexample to the reverse implication.

\begin{theorem}[Miller]\label{millerPpoint}
Every $\mathsf{P}$-point has the $\MPp$.
\end{theorem}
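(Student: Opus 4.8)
The plan is to reduce the statement to a single combinatorial task and then solve that task by a fusion argument. Fix a $\mathsf{P}$-point $\UU$ and a countable crowded $Q\subseteq\UU$. First I record the key elementary observation: if $z\in\UU$ then $z\!\uparrow\subseteq\UU$, because $\UU$ is upward closed; moreover $z\!\uparrow$ is closed in $2^\omega$. Hence it suffices to produce $z\in\UU$ together with a crowded $Q'\subseteq Q$ such that $z\subseteq q$ for every $q\in Q'$ (equivalently $Q'\subseteq z\!\uparrow$): having done this, set $K=\overline{Q'}$ (closure in $2^\omega$). Then $K\subseteq z\!\uparrow\subseteq\UU$, and $K$ is a nonempty compact, zero-dimensional, metrizable space which is crowded (being the closure of a dense-in-itself set), hence a copy of $2^\omega$ by Brouwer's characterization. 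Finally $K\cap Q\supseteq Q'$, and every point of $K\cap Q$ is a limit of points of $Q'$, so $K\cap Q$ is crowded. This witnesses the $\MPp$. The reduction is in fact essentially forced, since $K\subseteq z\!\uparrow$ means that the crowded set $K\cap Q$ must consist of points of $Q$ each containing the single $z$.

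Second, I would construct $Q'$ and $z$ by a simultaneous recursion indexed by $2^{<\omega}$. Along the recursion I build points $q_s\in Q$ ($s\in 2^{<\omega}$) forming a Cantor scheme: the $q_{s^\frown i}$ extend the finite approximation $q_s\!\restriction m_{|s|}$, each split freezes a distinguishing coordinate, and the ``left-returning'' branches are chosen so close to their predecessors that $Q'=\{q_s:s\in 2^{<\omega}\}$ is dense-in-itself; thus $\overline{Q'}$ is a copy of $2^\omega$. Writing $A_k=\bigcap_{|s|\le k}q_s$, each $A_k$ is a finite intersection of members of $\UU$, so $A_k\in\UU$ and $A_0\supseteq A_1\supseteq\cdots$. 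I then assemble $z$ blockwise by setting $z\cap[m_k,m_{k+1}):=A_{k+1}\cap[m_k,m_{k+1})$. The agreement built into the scheme yields honest containment $z\subseteq q_s$ for every $s$: on a block $[m_j,m_{j+1})$ the content lies in $q_{s\restriction(j+1)}$, and $q_s$ agrees with its ancestor $q_{s\restriction(j+1)}$ on $[0,m_{j+1})$, so $z\cap[m_j,m_{j+1})\subseteq q_s$. Hence $Q'\subseteq z\!\uparrow$, and also $z\subseteq^\ast A_k$ for every $k$ (the part of $z$ below $A_k$ is confined to $[0,m_{k-1})$), i.e.\ $z$ is a pseudointersection of the $A_k$.

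The main obstacle, and the only place where the full strength of $\UU$ being a $\mathsf{P}$-point (rather than merely a filter) is used, is to guarantee that this particular pseudointersection $z$ actually belongs to $\UU$, while at the same time never running out of room to split. A pseudointersection of members of $\UU$ need not lie in $\UU$, so the block lengths $m_k$ must be chosen with care. The device is to maintain a decreasing sequence of reservoirs $R_k\in\UU$, where $R_{k+1}$ refines $R_k$ by intersecting it with pseudointersections, supplied by the $\mathsf{P}$-point property, of the countably many points of $Q$ extending the current finite conditions. On the one hand this keeps, inside every basic clopen set met by $Q$, infinitely many points of $Q$ that almost contain the current reservoir, so that two distinct extensions are always available and the Cantor scheme can continue to split; on the other hand, choosing each $m_{k+1}$ past the finitely many defects of the reservoir against the newly selected points forces $z$ to almost contain a fixed member of $\UU$, whence $z\in\UU$. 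Reconciling these two demands, continued splitting together with $z\in\UU$, and upgrading mod-finite containment to honest containment, is the delicate interval bookkeeping at the heart of Miller's argument, and is where I expect essentially all of the work to lie.
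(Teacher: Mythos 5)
Your first paragraph is correct and coincides exactly with the paper's reduction: Theorem \ref{millerPpoint} is obtained there as a special case of Lemma \ref{generalizedmiller} (via Corollary \ref{generalizedmillercor}, since a $\mathsf{P}$-point is an ultrafilter, hence non-meager), and that lemma produces precisely what you ask for, namely $z\in\UU$ and a crowded $Q'\subseteq Q$ with $Q'\subseteq z\!\uparrow$; your closure argument ($K=\cl(Q')$ is compact, crowded, zero-dimensional, hence a copy of $2^\omega$, and $K\cap Q$ is crowded because every point of it is a limit of $Q'$) is fine. Your blockwise containment computation is also fine as far as it goes: if $z\cap[m_j,m_{j+1})\subseteq A_{j+1}$ and children agree with parents below the markers, then $z\subseteq q_s$ for all $s$.

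The genuine gap is the step you yourself flag: getting $z\in\UU$, and the reservoir mechanism you sketch does not close it. The claim that ``choosing each $m_{k+1}$ past the finitely many defects of the reservoir against the newly selected points forces $z$ to almost contain a fixed member of $\UU$'' is circular. To have $R\cap[m_k,m_{k+1})\subseteq z$ for a \emph{fixed} $R\in\UU$, the level-$(k+1)$ nodes must have their defects against $R$ below $m_k$; but those nodes are constrained to lie in the clopen sets $[q_s\upharpoonright m_k]$, within which \emph{every} available point of $Q$ may have defects beyond $m_k$ (defects of the members of a countable set against a pseudointersection are unbounded). Shrinking the reservoir to repair this replaces the target set, and the pseudointersection of the resulting $\omega$-sequence of reservoirs is only available after all the markers $m_k$ have been committed, so no single member of $\UU$ ends up almost contained in $z$; your $z$ is merely \emph{some} pseudointersection of the $A_k$, which, as you note, need not belong to $\UU$. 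The paper's proof evades this entirely by a different device: it never defines $z$ diagonally and never keeps the whole tree. It fixes one pseudointersection $x\in\UU$ of $Q$ at the outset, builds candidate points $q_s$ for all $s$ in the tree $\Ss$ together with markers $k_n$ satisfying $x\setminus k_n\subseteq\bigcap\{q_s:|s|\leq n\}$, and then invokes Talagrand's characterization of non-meagerness (Theorem \ref{talagrandcriterion}, applicable since ultrafilters are non-meager) to obtain \emph{simultaneously} a branch $\phi$ with $\phi^{-1}(0)$ infinite and a set $w\in\UU$ avoiding all blocks $[k_n,k_{n+1})$ with $n\in\phi^{-1}(0)$. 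Then $z=x\cap w\in\UU$ for free, $Q'$ consists only of the nodes along the branch $\phi$ (crowded thanks to the bookkeeping function $f$ that revisits every earlier node infinitely often along every branch), and the only block where $z\subseteq q_{\phi\upharpoonright n}$ could fail is exactly one that $w$ annihilates. In short: membership of $z$ in the ultrafilter is bought by intersecting two members of $\UU$, with the non-meagerness witness selecting the branch, rather than engineered along a full Cantor scheme; without some such exterior selection principle, the core of your plan would remain unprovable as stated.
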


Next, we generalize Miller's result to non-meager $\mathsf{P}$-filters (see Corollary \ref{generalizedmillercor}) by suitably modifying his proof. This will be a crucial ingredient in the proof of Theorem \ref{main}. In fact, it will allow us to give a new, more systematic proof of the left-to-right direction of Theorem \ref{Pequalscb}, and to show that having the $\MP$ is actually equivalent to being a non-meager $\mathsf{P}$-filter. Finally, using this characterization, we will be able to prove that a $\CDH$ filter must be a non-meager $\mathsf{P}$-filter.

\begin{lemma}\label{generalizedmiller}
Let $\FF$ be a non-meager filter. Let $Q$ be a countable crowded
subspace of $\FF$ such that $Q$ has a pseudointersection in $\FF$. Then there exists a crowded $Q'\subseteq Q$ such that $Q'\subseteq z\!\uparrow$ for some $z\in\FF$.
\end{lemma}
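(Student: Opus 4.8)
The plan is to reformulate the conclusion combinatorially and then feed Talagrand's criterion (Theorem~\ref{talagrandcriterion}) into a Miller-style fusion. Fix a pseudointersection $w\in\FF$ of $Q$, and for $x\in Q$ write $d(x)=w\setminus x$, which is finite since $w\subseteq^\ast x$. Since $z\!\uparrow$ is closed in $2^\omega$ and a non-scattered separable metrizable space contains a crowded subspace (its perfect kernel), it suffices to produce some $z\in\FF$ for which $Q\cap z\!\uparrow=\{x\in Q:z\subseteq x\}$ is non-scattered: its perfect kernel is then a crowded $Q'\subseteq Q$ with $Q'\subseteq z\!\uparrow$. Replacing $z$ by $z\cap w$ (still in $\FF$) we may look for $z\subseteq w$, and then $z\subseteq x$ is equivalent to $z\cap d(x)=\varnothing$. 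So the whole task is: \emph{find $z\in\FF$ with $z\subseteq w$ such that $\{x\in Q:z\cap d(x)=\varnothing\}$ is non-scattered}.

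Next I would dispose of the easy case. Enumerate $w=\{a_0<a_1<\cdots\}$ and set $Q_N=\{x\in Q:w\setminus\{a_0,\dots,a_{N-1}\}\subseteq x\}=Q\cap z_N\!\uparrow$, where $z_N=w\setminus\{a_0,\dots,a_{N-1}\}\in\FF$ (it differs from $w$ by a finite set). Since every $x\in Q$ has finite $d(x)$, we have $Q=\bigcup_N Q_N$, an increasing union of closed subsets. If some $Q_N$ is non-scattered we are done with $z=z_N$. Thus the interesting case is when every $Q_N$ is scattered; equivalently, no crowded subfamily of $Q$ is contained in any $z_N\!\uparrow$, so the defects are unbounded along every crowded subfamily and we are forced to use a $z$ with $w\setminus z$ infinite. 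This is exactly where non-meagerness must be invoked.

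Here I would run the fusion adapted from Miller's proof of Theorem~\ref{millerPpoint}. Using that $Q$ is crowded, build a Cantor scheme $\{q_t:t\in 2^{<\omega}\}\subseteq Q$ that is splitting at every node and forms a crowded set, choosing children that agree with their parent on longer and longer initial segments; do this simultaneously with a partition of $\omega$ into finite intervals $\{P_k\}$ arranged so that the increment $d(q_t)\setminus d(q_{t\restriction(|t|-1)})$ introduced at each node falls into a prescribed block. Then apply Talagrand's criterion to $\{P_k\}$ to obtain $z\in\FF$ with $z\cap P_k=\varnothing$ for infinitely many $k$. Along any node all of whose ancestral increments lie in $z$-avoided blocks one has $z\cap d(q_t)=\varnothing$, i.e.\ $q_t\in z\!\uparrow$; so the plan is to thin the scheme to such nodes and read off a crowded $Q'\subseteq Q\cap z\!\uparrow$.

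The main obstacle is precisely this last thinning. Talagrand's criterion only guarantees that $z$ misses \emph{infinitely many} blocks, and an arbitrary infinite set of avoided blocks need not thread a branching (perfect) subtree through the scheme---it could meet only a single branch, leaving a scattered remnant. Overcoming this is the heart of the argument and the point where Miller's fusion for $\mathsf{P}$-points must be genuinely strengthened: one has to build the scheme with enough redundancy (for instance, infinitely many candidate children at each node, with increments placed in pairwise distinct blocks) and interleave the application of Talagrand's criterion with the construction, so that the avoided blocks are forced to support branching at cofinally many nodes. Keeping each increment confined to its designated block while still retaining enough branching after thinning is the delicate bookkeeping that makes this work.
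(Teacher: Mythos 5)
Your reduction (pass to $z\subseteq w$, look for $z\in\FF$ making $Q\cap z\!\uparrow$ non-scattered, take its perfect kernel) and your choice of ingredients (Talagrand's criterion fed into a Miller-style fusion) match the paper's strategy, but the proposal has a genuine gap, and you have in fact named it yourself: the ``delicate bookkeeping'' that is supposed to turn infinitely many avoided blocks into a crowded set of surviving nodes is never carried out, and everything before it (the easy case where some $Q_N=Q\cap z_N\!\uparrow$ is non-scattered) is dispensable. Worse, the fix you gesture at --- redundant children, interleaving Talagrand with the construction, forcing the avoided blocks to ``support branching'' --- is misdirected, because no branching is needed: the lemma asks only for a crowded $Q'$, not for a copy of $2^\omega$ (that is extracted later, in Theorem \ref{main}, by taking the closure of $Q'$ inside the compact set $z\!\uparrow$). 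Your worry that an infinite set of avoided blocks ``could meet only a single branch, leaving a scattered remnant'' is exactly right \emph{for the scheme you describe}, in which each child agrees with its parent on a long initial segment: along any single branch such a scheme produces a Cauchy, hence convergent, hence scattered sequence. The missing idea is to change the copying rule, not to add redundancy.

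Concretely, the paper's proof fixes a bookkeeping function $f$ with $f(n)<n$ and every fiber $f^{-1}(m)$ infinite, indexes the construction by the tree $\Ss$ of sequences ending in $0$, and requires each $q_s$ to agree up to the current threshold $k_{|s|-1}$ not with its parent but with its $f(\ell_s)$-th ancestor in $\Ss$ (where $\ell_s$ counts ancestors of $s$ in $\Ss$). Consequently, along any branch through infinitely many $\Ss$-nodes, every earlier point is approximated infinitely often, so the set of $q_s$ along a \emph{single} branch is already crowded. Talagrand's criterion is then applied once, to the interval partition $\{[k_n,k_{n+1}):n\in\omega\}$ built during the recursion, and the resulting selector $\phi$ (with $\phi^{-1}(0)$ infinite, and $w\in\FF$ avoiding the intervals indexed by $\phi^{-1}(0)$) is read simultaneously as a branch of $2^{<\omega}$: taking $Q'=\{q_s:s\subseteq\phi,\ s\in\Ss\}$ and $z=x\cap w$, the levels of the tree are synchronized with the partition so that the only possibly new defect of $q_{\phi\upharpoonright n}$ below $k_n$ lies in $[k_{n-1},k_n)$, which $z$ avoids precisely because $\Ss$-nodes of the branch occur where $\phi(n-1)=0$; an induction along the branch then yields $z\subseteq q_s$ for all such $s$. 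So the obstruction at the heart of your plan dissolves once the scheme is designed this way, but as submitted your argument stops exactly where the real work begins.
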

\begin{proof}
Fix $x\in\FF$ such that $x\subseteq^\ast q$ for all $q\in Q$. Let
$$
\Ss=\{\varnothing\}\cup\{s\in 2^{<\omega}:|s|\geq 1\text{ and
}s(|s|-1)=0\}.
$$
\noindent We will also need a bookkeeping function
$f:\omega\longrightarrow\omega$ such that the following conditions are
satisfied.
\begin{itemize}
\item $f(n)<n$ for every $n\geq 1$. 
\item $f^{-1}(m)$ is infinite for every $m\in\omega$.
\end{itemize}
Constructing such a function is an easy exercise, left to the reader.

We will recursively choose natural numbers $k_0<k_1<\cdots$ and $q_s\in Q$ for
$s\in\Ss$. By induction, we will make sure that the following conditions are
satisfied. Define $\ell_s\in\omega$ for every $s\in 2^{<\omega}$ so that
$\{t\in\Ss:t\subsetneq s\}=\{t^s_i:i<\ell_s\}$, where
$\varnothing=t^s_0\subsetneq\cdots\subsetneq t^s_{\ell_s-1}$. Also set
$q^s_i=q_{t^s_i}$ for every $i<\ell_s$. Notice that if $s'\in 2^{<\omega}$ and
$s'\supseteq s$ then $t^s_i=t^{s'}_i$ and $q^s_i=q^{s'}_i$ for every $i<\ell_s$.
\begin{enumerate}
\item\label{distinctalongbranch} $t\subsetneq s$ implies $q_t\neq q_s$ for all
$t,s\in\Ss$.
\item\label{trulyincluded} $x\setminus
k_n\subseteq\bigcap\{q_s:s\in\Ss\text{ and }|s|\leq n\}$ for all
$n\in\omega$.
\item\label{crowded} $q_s\upharpoonright
k_{|s|-1}=q^s_{f(\ell_s)}\upharpoonright k_{|s|-1}$ for every $s\in\Ss$ such
that $|s|\geq 1$.
\end{enumerate}

Start by letting $q_\varnothing$ be any element of $Q$. Assume without loss of
generality that $x\subseteq q_\varnothing$, and let $k_0=0$. Now fix $n\geq 1$.
Assume that $q_t$ has been constructed for every $t\in\Ss$ such that $|t|<n$,
and that $k_i$ has been constructed for every $i<n$. Fix $s\in\Ss$ such that
$|s|=n$. Define $q_s$ to be any element of
$$
(Q\cap [q^s_{f(\ell_s)}\upharpoonright k_{n-1}])\setminus\{q^s_i:i<\ell_s\}.
$$
Now simply choose $k_n>k_{n-1}$ big enough so that condition
$(\ref{trulyincluded})$ is satisfied.

Since $\FF$ is a non-meager filter, applying Theorem \ref{talagrandcriterion} yields a function $\phi:\omega\longrightarrow 2$
such that $\phi^{-1}(0)$ is infinite and $w\in\FF$ such that
$$
w\cap\bigcup\{[k_n,k_{n+1}):n\in\phi^{-1}(0)\}=\varnothing.
$$
Let $z=x\cap w$ and $Q'=\{q_s:s\subseteq\phi\text{ and }s\in\Ss\}$.

First we will show that $Q'$ is crowded. So let $q\in Q'$ and fix
$\ell\in\omega$. We will find $q'\in Q'$ such that $q'\neq q$ and
$q'\upharpoonright\ell=q\upharpoonright\ell$. Let $s\subseteq\phi$ and
$m<\ell_s$ be such that $q=q^s_m$. Notice that $m=f(\ell_{s'})$ for infinitely
many $s'\in\Ss$ such that $s'\subseteq\phi$, therefore it is possible to
choose one with $n=|s'|>|s|$ big enough so that $k_{n-1}\geq\ell$. Let
$q'=q_{s'}$. Condition $(\ref{distinctalongbranch})$ implies that $q'\neq q$.
Condition $(\ref{crowded})$ implies that $q'\upharpoonright
k_{n-1}=q^{s'}_{f(\ell_{s'})}\upharpoonright k_{n-1}$, which is sufficient
because $q^{s'}_{f(\ell_{s'})}=q^s_m$.

In conclusion, we will use induction on $n$ to show that $z\subseteq
q_{\phi\upharpoonright n}$ for every $n\in\omega$ such that $\phi\upharpoonright
n\in\Ss$. The claim is clear for $n=0$ by the choice of $x$. Now assume that
$n\geq 1$ and $s=\phi\upharpoonright n\in\Ss$. Let $k\in z$. We will show that
$k\in q_s$ by considering the following cases.
\begin{itemize}
\item $k\in [0,k_{n-1})$.
\item $k\in [k_{n-1},k_n)$.
\item $k\in [k_n,\infty)$.
\end{itemize}
By condition $(\ref{crowded})$, there exists an $m<\ell_s$ such that
$q_s\upharpoonright k_{n-1}=q^s_m\upharpoonright k_{n-1}$. Since $q^s_m=q_t$ for
some $t\in\Ss$ such that $t\subsetneq s$, the inductive hypothesis guarantees
that $z\subseteq q^s_m$. So $k\in q_s$ in the first case. Notice that
$\phi(n-1)=s(n-1)=0$, so $[k_{n-1},k_n)\cap z=\varnothing$. This shows that the
second case never happens. Finally, condition $(\ref{trulyincluded})$ implies
that $k\in q_s$ in the third case.
\end{proof}
\begin{corollary}\label{generalizedmillercor}
Every non-meager $\mathsf{P}$-filter has the $\MPp$.
\end{corollary}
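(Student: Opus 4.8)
The plan is to reduce the statement immediately to Lemma \ref{generalizedmiller} and then promote the countable crowded set it produces to a genuine copy of $2^\omega$ by taking its closure in $2^\omega$. So let $\FF$ be a non-meager $\mathsf{P}$-filter and let $Q$ be a countable crowded subspace of $\FF$. Since $Q$ is a countable subset of $\FF$ and $\FF$ is a $\mathsf{P}$-filter, $Q$ has a pseudointersection in $\FF$. Hence $Q$ satisfies the hypotheses of Lemma \ref{generalizedmiller}, which yields a crowded $Q'\subseteq Q$ together with some $z\in\FF$ such that $Q'\subseteq z\!\uparrow$.

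Next I would set $K=\overline{Q'}$, the closure of $Q'$ taken in $2^\omega$, and verify that $K$ witnesses the $\MPp$ for $Q$. First, $K$ is a copy of $2^\omega$: being closed in $2^\omega$ it is compact, and as a subspace of $2^\omega$ it is metrizable and zero-dimensional; moreover the closure of a crowded set is crowded, so $K$ is crowded. By the classical topological characterization of the Cantor set, $K\approx 2^\omega$. Second, $K\subseteq\FF$: the set $z\!\uparrow$ is closed in $2^\omega$, so from $Q'\subseteq z\!\uparrow$ we get $K=\overline{Q'}\subseteq z\!\uparrow$; and since $z\in\FF$ and filters are upward closed, $z\!\uparrow\subseteq\FF$. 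Thus $K\subseteq z\!\uparrow\subseteq\FF$, which simultaneously places the copy inside $\FF$ and supplies the extra clause $K\subseteq z\!\uparrow$ with $z\in\FF$ required by $\MPp$. Third, $K\cap Q$ is crowded: it contains $Q'$, which is dense in the crowded space $K$, so every neighborhood of every point of $K\cap Q$ meets $Q'\subseteq K\cap Q$ in infinitely many points; hence no point of $K\cap Q$ is isolated.

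The genuinely delicate point --- and the whole reason Lemma \ref{generalizedmiller} is phrased with the clause $Q'\subseteq z\!\uparrow$ rather than merely producing a crowded $Q'\subseteq Q$ --- is the containment $K\subseteq\FF$. A non-meager filter is a dense, non-closed subset of $2^\omega$, so in general the closure of a subset of $\FF$ will spill out of $\FF$, and there is no reason for the closure of an arbitrary crowded $Q'\subseteq Q$ to remain inside the filter. The trick is that $z\!\uparrow$ is at once compact (hence contains the closure of any of its subsets) and, by upward closure of the filter, entirely contained in $\FF$; trapping $Q'$ inside $z\!\uparrow$ therefore traps its closure inside $\FF$ as well. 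Everything else (identifying $K$ as a Cantor set and checking that $K\cap Q$ inherits crowdedness from the dense copy $Q'$) is routine, so I expect no further obstacle.
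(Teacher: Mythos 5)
Your proof is correct and is essentially the argument the paper intends: the corollary is stated without proof precisely because it follows from Lemma \ref{generalizedmiller} by the closure trick you describe, and the same step (a crowded $Q'\subseteq z\!\uparrow$ with $z\in\FF$ has compact closure inside $\FF$ that is a copy of $2^\omega$ meeting $Q$ in a crowded set) appears verbatim in the paper's proof of the implication $(8)\rightarrow(3)$ of Theorem \ref{main}. Your commentary on why the clause $Q'\subseteq z\!\uparrow$ is the crux --- compactness of $z\!\uparrow$ plus upward closure of $\FF$ trap the closure inside the filter --- matches the paper's design exactly.
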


\section{The main result}

This section contains our main result, which gives the seven characterizations promised in the title (see Theorem \ref{main}). The proof of the implication $(\ref{completelybaire})\rightarrow(\ref{nonmeagerPfilter})$ is due to Marciszewski (see \cite[Lemma 2.1]{marciszewski}), and it is included for completeness.

\begin{definition}
A space $X$ has the \emph{Cantor-Bendixson property} (briefly, $\CBP$) if every closed subspace of $X$ is either scattered or it contains a copy of $2^\omega$. A subspace $X$ of $2^\omega$ has the \emph{strong Cantor-Bendixson property} (briefly, $\CBPp$) if every closed subspace of $X$ is either scattered or it contains a copy $K$ of $2^\omega$ such that $K\subseteq z\!\uparrow$ for some $z\in X$.
\end{definition}
\noindent Notice that the $\CBPp$ implies the $\CBP$ for every subspace of $2^\omega$. Furthermore, one can easily check that the $\MP$ implies the $\CBP$ for every space, and that the $\MPp$ implies the $\CBPp$ for every subspace of $2^\omega$. Finally, using Theorem \ref{hurewicz}, one can show that every space with the $\CBP$ is $\CB$.

The above definition is of course motivated by the classical Cantor-Bendixson derivative. Notice that the $\CBPp$ is to the $\CBP$ what the $\MPp$ is to the $\MP$. In both cases, one version of the property is purely topological, while the other requires that the copy $K$ of $2^\omega$ can be bounded (in the ordering given by reverse-inclusion) by an element of the space.\footnote{\,This is partly inspired by \cite{millers}, where Miller studies the relation between property $\Ps$ and preservation by Sacks forcing for ultrafilters. Recall that a subset $X$ of $2^\omega$ has \emph{property $\Ps$} (or is \emph{Marczewski measurable}) if every copy $K$ of $2^\omega$ in $2^\omega$ contains a copy $K'$ of $2^\omega$ such that $K'\subseteq X$ or $K'\subseteq 2^\omega\setminus X$. We say that an ultrafilter $\UU$ has \emph{property $\Psp$} if every copy $K$ of $2^\omega$ in $2^\omega$ contains a copy $K'$ of $2^\omega$ such that $K'\subseteq z\!\uparrow$ or $K'\subseteq (\omega\setminus z)\!\downarrow$ for some $z\in\UU$. This notion is due to Miller (even though he did not give it a name), who obtained the following results (see \cite[Theorem 1]{millers} and \cite[Theorem 3]{millers} respectively).
\begin{itemize}
\item An ultrafilter has property $\Psp$ if and only if it is preserved by Sacks forcing.
\item Assume $\MAc$. Then there exists an ultrafilter that has property $\Ps$ but not property $\Psp$.
\end{itemize}
The second result seems particularly interesting to us, because it exhibits a property of ultrafilters that is not equivalent to its strong version.} Furthermore, it is easy to realize that the strong versions of these properties are only of interest under some additional combinatorial assumption on $X$ (such as being a filter). In fact, given a coinfinite $z\subseteq\omega$ and a non-empty zero-dimensional space $Z$, one can always find a subspace $X$ of $2^\omega$ such that $X\approx Z$ and $z\in X\subseteq z\!\uparrow$ (since $z\!\uparrow\,\approx 2^\omega$ and $2^\omega$ is homogeneous). In particular, every zero-dimensional space with the $\MP$ (respectively, the $\CBP$) is homeomorphic to a subspace of $2^\omega$ with the $\MPp$ (respectively, the $\CBPp$).

As we already mentioned, some obvious relationships hold among the properties that we considered so far. Next, we will show that the implications in the following diagram (and their obvious consequences) are the only ones that hold in $\ZFC$ for arbitrary subspaces of $2^\omega$.

\smallskip

\begin{center}
$
\xymatrix{
& \MP \ar@/^/[rd]& & & &\\
  \MPp \ar@/^/[ru] \ar@/_/[rd] & & \CBP \ar@{->}[r] &\CB& &\RCDH \ar@{->}[r]&\CDH\\
& \CBPp \ar@/_/[ru]& & & & 
}
$
\end{center}

\smallskip

For an example (based on a result of Brendle) of a subspace of $2^\omega$ that has the $\CBP$ but not the $\MP$, see \cite[Proposition 3.2]{medinizdomskyy}. For an example of a $\CB$ subspace of $2^\omega$ without the $\CBP$, see \cite[Proposition 3.3]{medinizdomskyy}. 

To see that the $\MP$ does not imply the $\MPp$ and that the $\CBP$ does not imply the $\CBPp$, a single example will suffice. Let $\Aa$ be an independent family that is homeomorphic to $2^\omega$ (see \cite[Lemma 7]{medinimilovich}). Since $\Aa$ is compact, it is clear that $\Aa$ has the $\MP$. Notice that $\Aa\cap (z\!\uparrow)=\{z\}$ for every $z\in\Aa$ because $\Aa$ is an independent family. Since $\Aa$ is a non-scattered closed subspace of itself, it follows that $\Aa$ does not have the $\CBPp$.

To see that the $\CBPp$ does not imply the $\MPp$, let $X$ be a subspace of $2^\omega$ that has the $\CBP$ but not the $\MP$. As we mentioned above, we can assume without loss of generality that $z\in X\subseteq z\!\uparrow$ for some coinfinite $z\subseteq\omega$ (for example $z=\varnothing$). It is trivial to check that $X$ has the desired properties.

For an example, under $\MAsigma$, of a $\CDH$ subspace of $2^\omega$ that is not $\RCDH$, see \cite[Corollary 10]{medinivanmillzdomskyy}. Furthermore, an $\RCDH$ subspace of $2^\omega$ need not be $\CB$. In fact, Hern\'andez-Guti\'errez, Hru\v{s}\'ak and van Mill recently gave $\ZFC$ examples of meager $\RCDH$ dense subspaces of $2^\omega$ (see \cite[Theorem 4.1]{hernandezgutierrezhrusakvanmill}).

Finally, to see that a subspace of $2^\omega$ with the $\MPp$ need not be $\CDH$, consider $X=\{\omega\setminus n:n\in\omega\}\cup\{\varnothing\}$. Since $X\approx\omega +1$ and $\varnothing\in X$, it is clear that $X$ has the desired properties.

\begin{theorem}\label{main}
For a filter $\FF$, the following conditions are equivalent.
\begin{enumerate}
\item\label{nonmeagerPfilter} $\FF$ is a non-meager $\mathsf{P}$-filter.
\item\label{millerpropertyplus} $\FF$ has the $\MPp$.
\item\label{millerproperty} $\FF$ has the $\MP$.
\item\label{cantorbendixsonplus} $\FF$ has the $\CBPp$.
\item\label{cantorbendixson} $\FF$ has the $\CBP$.
\item\label{completelybaire} $\FF$ is $\CB$.
\item\label{cdhp} $\FF$ is $\RCDH$.
\item\label{cdh} $\FF$ is $\CDH$.
\end{enumerate}
\end{theorem}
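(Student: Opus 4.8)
The plan is to prove the theorem by first closing a cycle of implications among conditions (\ref{nonmeagerPfilter})--(\ref{completelybaire}) and then grafting the two homogeneity conditions onto that block. Most of the arrows among the first six are already available: Corollary \ref{generalizedmillercor} gives (\ref{nonmeagerPfilter})$\rightarrow$(\ref{millerpropertyplus}), Marciszewski's Theorem \ref{Pequalscb} gives (\ref{completelybaire})$\rightarrow$(\ref{nonmeagerPfilter}), and the diagram of generic implications recorded before the theorem supplies (\ref{millerpropertyplus})$\rightarrow$(\ref{millerproperty}), (\ref{millerpropertyplus})$\rightarrow$(\ref{cantorbendixsonplus}), (\ref{millerproperty})$\rightarrow$(\ref{cantorbendixson}), (\ref{cantorbendixsonplus})$\rightarrow$(\ref{cantorbendixson}), and (\ref{cantorbendixson})$\rightarrow$(\ref{completelybaire}). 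Chaining these as (\ref{nonmeagerPfilter})$\rightarrow$(\ref{millerpropertyplus})$\rightarrow$(\ref{millerproperty})$\rightarrow$(\ref{cantorbendixson})$\rightarrow$(\ref{completelybaire})$\rightarrow$(\ref{nonmeagerPfilter}), with the side branch (\ref{millerpropertyplus})$\rightarrow$(\ref{cantorbendixsonplus})$\rightarrow$(\ref{cantorbendixson}) absorbing (\ref{cantorbendixsonplus}), already makes (\ref{nonmeagerPfilter})--(\ref{completelybaire}) pairwise equivalent. It then remains only to fit (\ref{cdhp}) and (\ref{cdh}) into this block.

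For the easy side I would prove (\ref{nonmeagerPfilter})$\rightarrow$(\ref{cdhp}) and invoke the generic arrow (\ref{cdhp})$\rightarrow$(\ref{cdh}). Theorem \ref{Pimpliescdh} already yields that a non-meager $\mathsf{P}$-filter is $\CDH$, but I want the stronger conclusion that it is $\RCDH$, i.e.\ that the matching homeomorphism can be realized by an autohomeomorphism of $2^\omega$ carrying $\FF$ onto itself. The point is that the back-and-forth constructed in the proof of Theorem \ref{Pimpliescdh} acts by permuting coordinates and toggling finite patterns, so a careful reading shows it is (the restriction of) an autohomeomorphism of $2^\omega$ preserving $\FF$; this upgrades the conclusion to (\ref{cdhp}) with no work beyond bookkeeping.

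The substantial new direction is closing the loop from the homogeneity side, for which it suffices to prove (\ref{cdh})$\rightarrow$(\ref{millerproperty}); equivalently, I would prove the contrapositive $\neg(\ref{millerproperty})\rightarrow\neg(\ref{cdh})$. We may assume $\FF\neq\COF$, since $\COF\approx\QQQ$ is countable and a countable crowded space is trivially not $\CDH$ (the whole space is a countable dense subset that no self-homeomorphism can carry onto a proper dense subset). So assume $\FF$ fails the $\MP$ and fix a countable crowded $Q\subseteq\FF$ such that $K\cap Q$ is not crowded for every copy $K$ of $2^\omega$ contained in $\FF$. The key observation is that the property ``there is a copy $K$ of $2^\omega$ in $\FF$ with $K\cap D$ crowded'' is invariant under homeomorphisms of $\FF$, since such a homeomorphism sends copies of $2^\omega$ to copies of $2^\omega$ and preserves crowdedness; hence it suffices to exhibit two countable dense subsets of $\FF$ on which this invariant disagrees. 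A dense set $E$ on which it holds is easy: as $\FF\neq\COF$ we may fix a coinfinite $z\in\FF$, note that $z\!\uparrow\,\approx 2^\omega$ is a copy of $2^\omega$ inside $\FF$, and let $E$ be a countable dense subset of $z\!\uparrow$ together with a countable dense subset of $\FF$, so that $E\cap(z\!\uparrow)$ is crowded. The difficulty is to extend the witness $Q$ to a countable dense set $D$ for which the invariant still fails, i.e.\ no copy of $2^\omega$ meets $D$ in a crowded set: the points added to make $Q$ dense must be inserted so ``sparsely'' (relatively discretely) that they cannot conspire with any $K\cap Q$ to produce a crowded intersection. This construction, where Talagrand's criterion (Theorem \ref{talagrandcriterion}) and the combinatorics underlying Lemma \ref{generalizedmiller} do the real work, is the main obstacle; once $D$ and $E$ are in hand, no homeomorphism of $\FF$ can map $D$ to $E$, so $\FF$ is not $\CDH$, and the cycle through (\ref{cdhp}) and (\ref{cdh}) is complete.
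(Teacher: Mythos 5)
Your handling of conditions (\ref{nonmeagerPfilter})--(\ref{completelybaire}) and of (\ref{nonmeagerPfilter})$\rightarrow$(\ref{cdhp})$\rightarrow$(\ref{cdh}) matches the paper (the paper reproves Marciszewski's implication (\ref{completelybaire})$\rightarrow$(\ref{nonmeagerPfilter}) for completeness, but citing Theorem \ref{Pequalscb} is legitimate, and (\ref{nonmeagerPfilter})$\rightarrow$(\ref{cdhp}) is indeed extracted from the proof of Theorem \ref{Pimpliescdh}). The genuine gap is in (\ref{cdh})$\rightarrow$(\ref{millerproperty}), the one implication that is actually new here, and you leave its central step unproven: the construction of a countable dense $D\supseteq Q$ such that no copy of $2^\omega$ in $\FF$ meets $D$ in a crowded set is not bookkeeping but the entire content of the implication, and your guiding intuition for it is unworkable. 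Since $\COF\subseteq\FF$, the filter $\FF$ is crowded, so \emph{every} countable dense $D\subseteq\FF$ is automatically crowded; moreover $Q$ is not dense in $\FF$ (in the relevant non-meager case a countable set is nowhere dense, and the canonical $\neg\MP$ witness is even closed), so $D\setminus Q$ must itself be dense, hence crowded, in a nonempty open piece of $\FF$ --- there is no ``sparse, relatively discrete'' insertion compatible with density. You would have to defeat your invariant on a necessarily crowded dense set, and you give no argument that this is possible for an arbitrary non-meager non-$\mathsf{P}$-filter. You also never treat the meager case in the contrapositive, where your tools (Theorem \ref{talagrandcriterion} and Lemma \ref{generalizedmiller}) do not apply; the paper disposes of it via Corollary \ref{meagerfiltersmanytypes}.

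The paper avoids your obstacle by using the $\CDH$ hypothesis \emph{positively} rather than contrapositively: given a countable crowded $Q\subseteq\FF$, extend it to a countable dense $D$ and take a homeomorphism $h$ of $\FF$ with $h[D]=E$ where $E=\COF$ is the fixed \emph{target} dense set. Then $R=h[Q]$ is a countable crowded subspace of $\FF$ consisting of cofinite sets, so $\omega\in\FF$ is a pseudointersection of $R$ for free; $\FF$ is non-meager because a $\CDH$ filter has one type of countable dense subsets while meager filters have $\cccc$ many (Corollary \ref{meagerfiltersmanytypes}); hence Lemma \ref{generalizedmiller} applies to $R$ and yields a crowded $R'\subseteq R$ with $R'\subseteq z\!\uparrow$ for some $z\in\FF$. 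Pulling back, $Q'=h^{-1}[R']\subseteq Q$ is crowded with compact closure $K$ in $\FF$, and $K$ is the desired copy of $2^\omega$ with $K\cap Q$ crowded. In other words, the homeomorphism transports the difficulty into $\COF$, where pseudointersections are trivial, so no ``bad'' dense set ever has to be built --- which is exactly the construction your plan founders on.
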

\begin{proof}
First we will show that the first six properties are equivalent.
The implication $(\ref{nonmeagerPfilter})\rightarrow(\ref{millerpropertyplus})$ is the content of Corollary \ref{generalizedmillercor}. Given the discussion above, it will be enough to prove the implication $(\ref{completelybaire})\rightarrow(\ref{nonmeagerPfilter})$. Assume that $\FF$ is either meager or not a $\mathsf{P}$-filter. If $\FF$ is meager then $\FF$ is not even Baire. So assume that $\FF$ is not a $\mathsf{P}$-filter. Then there exists a sequence $x_0\supseteq x_1\supseteq\cdots$ consisting of elements
of $\FF$ with no pseudointersection in $\FF$. Without loss of generality, we can
assume that $c_n=x_n\setminus x_{n+1}$ is infinite for each $n$
and $\bigcup_{n\in\omega}c_n=\omega$. Hence, we can also assume that $\FF$
is a filter on $\omega\times\omega$ and $c_n=\{n\}\times\omega$ for each $n$. It
is clear that the following conditions hold.
\begin{itemize}
\item For all $x\in\FF$ there exists $n\in\omega$ such that $x\cap c_n$ is
infinite.
\item $\bigcup_{m\geq n}c_m\in\FF$ for every $n\in\omega$.
\end{itemize}
\noindent Consider the set $Q\subseteq 2^{\omega\times\omega}$ consisting of all $x\in\FF$ that satisfy the following requirements.
\begin{itemize}
\item If $(i,j)\in x$ and $k\geq i$ then $(k,j)\in x$.
\item If $(i,j)\in x$ and $k\leq j$ then $(i,k)\in x$.
\end{itemize}
It is not hard to see that $Q$ is a countable crowded closed subspace of $\FF$. Therefore $\FF$ is not $\CB$.

We will finish the proof by showing that $(\ref{nonmeagerPfilter})\rightarrow
(\ref{cdhp})\rightarrow (\ref{cdh})\rightarrow (\ref{millerproperty})$. The
implication $(\ref{nonmeagerPfilter})\rightarrow(\ref{cdhp})$ follows from the proof of \cite[Theorem 1.6]{hernandezgutierrezhrusak}. The implication $(\ref{cdhp})\rightarrow(\ref{cdh})$ is obvious. In order to show that $(\ref{cdh})\rightarrow (\ref{millerproperty})$,
assume that $\FF$ is $\CDH$. Fix a countable crowded subspace $Q$ of $\FF$. Extend $Q$ to a countable dense subset $D$ of
$\FF$ and let $E=\COF$. Since $\FF$ is $\CDH$, there exists a homeomorphism $h:\FF\longrightarrow\FF$ such that $h[D]=E$. Let $R=h[Q]$ and observe that $R$ is a
countable crowded subspace of $\FF$ with $\omega\in\FF$ as a pseudointersection. Also notice that $\FF$ must be non-meager by Corollary \ref{meagerfiltersmanytypes}. Therefore, by Lemma \ref{generalizedmiller}, there exists a crowded $R'\subseteq
R$ and $z\in\FF$ such that $R'\subseteq z\!\uparrow$. In particular,
$R'$ has compact closure in $\FF$. Let $Q'=h^{-1}[R']$. Since $h$ is a homeomorphism, it follows that $Q'\subseteq Q$ is crowded and has compact closure in $\FF$. It is clear that the closure $K$ of $Q'$ in $\FF$ is a copy of $2^\omega$ such that $K\cap Q$ is crowded.
\end{proof}

Clearly, Theorem \ref{main} implies that, for an ultrafilter, being a
$\mathsf{P}$-point is equivalent to being $\CDH$. Hence, the well-known fact that there exist ultrafilters that are not $\mathsf{P}$-points (simply apply Lemma \ref{nonP} with $\Aa=\varnothing$) immediately yields the following corollary. This answers \cite[Question 2]{medinimilovich} and simultaneously strengthens \cite[Theorem 15]{medinimilovich} (which gives, under $\MAc$, an ultrafilter that is not $\CDH$) and \cite[Theorem 1]{repovszdomskyyzhang} (which
gives a non-meager filter that is not $\CDH$).

\begin{corollary}\label{notcdhzfc}
There exists an ultrafilter that is not $\CDH$.
\end{corollary}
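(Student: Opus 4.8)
The plan is to combine Theorem \ref{main} with the fact, recalled at the start of the article, that every ultrafilter is non-meager. Since a non-meager $\mathsf{P}$-filter is just a $\mathsf{P}$-filter that happens to be non-meager, and non-meagerness is automatic for ultrafilters, I would first observe that for an ultrafilter $\UU$ the condition ``$\UU$ is a non-meager $\mathsf{P}$-filter'' is equivalent to ``$\UU$ is a $\mathsf{P}$-point''. Feeding this into the equivalence $(\ref{nonmeagerPfilter})\leftrightarrow(\ref{cdh})$ of Theorem \ref{main} yields that an ultrafilter is $\CDH$ if and only if it is a $\mathsf{P}$-point. Therefore the corollary reduces to the purely combinatorial statement that $\ZFC$ proves the existence of an ultrafilter that is not a $\mathsf{P}$-point.

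For the latter I would use the classical sum-of-ultrafilters construction. Fix a partition $\{c_n:n\in\omega\}$ of $\omega$ into infinite sets, a free ultrafilter $\VV$ on the index set $\omega$, and for each $n$ a free ultrafilter $\UU_n$ concentrated on $c_n$. Define $\UU$ by declaring $A\in\UU$ if and only if $\{n:A\cap c_n\in\UU_n\}\in\VV$. A routine check (which I would not spell out) shows that $\UU$ is a free ultrafilter on $\omega$.

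To see that $\UU$ is not a $\mathsf{P}$-point, I would exhibit a countable subset of $\UU$ with no pseudointersection in $\UU$, namely the decreasing sequence $d_n=\bigcup_{m\geq n}c_m$. Each $d_n$ lies in $\UU$ because $\VV$ contains every cofinite set. If some $z\in\UU$ satisfied $z\subseteq^\ast d_n$ for all $n$, then $z\cap c_i$ would be finite for every $i$, so $z\cap c_i\notin\UU_i$ for all $i$ (as each $\UU_i$ is free), whence $\{i:z\cap c_i\in\UU_i\}=\varnothing\notin\VV$ and thus $z\notin\UU$, a contradiction. Hence $\{d_n:n\in\omega\}$ has no pseudointersection in $\UU$, so $\UU$ is not a $\mathsf{P}$-point.

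I do not expect a genuine obstacle here: the conceptual content of the corollary lies entirely in the reduction afforded by Theorem \ref{main}, and the existence of non-$\mathsf{P}$-point ultrafilters is a well-known $\ZFC$ theorem (the alternative appeal indicated in the text is to Lemma \ref{nonP} with $\Aa=\varnothing$). The only step requiring any care is the final verification, and even there the argument is elementary.
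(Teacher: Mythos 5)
Your proposal is correct and takes essentially the same route as the paper: Theorem \ref{main}, combined with the automatic non-meagerness of ultrafilters (so that ``non-meager $\mathsf{P}$-filter'' means ``$\mathsf{P}$-point'' for ultrafilters), reduces the corollary to the $\ZFC$ fact that there exist ultrafilters that are not $\mathsf{P}$-points. The only divergence is in how that classical fact is witnessed --- you give the standard sum-of-ultrafilters construction, correctly verified, while the paper invokes Lemma \ref{nonP} with $\Aa=\varnothing$ (a lemma it needs later anyway) --- and these are interchangeable witnesses for the same well-known fact.
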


Furthermore, since Shelah showed that it is consistent that there
are no $\mathsf{P}$-points (see \cite[Theorem 4.4.7]{bartoszynskijudah}), it follows that it is consistent that there are no $\CDH$ ultrafilters. This answers \cite[Question 3]{medinimilovich}.

Similarly, Theorem \ref{main} implies that for an ultrafilter, being a
$\mathsf{P}$-point is equivalent to being $\CB$. This answers \cite[Question 10]{medinimilovich}. Therefore, as above, it is consistent that there are no $\CB$ ultrafilters. This answers \cite[Question 1]{medinimilovich}. However, the answer to both questions follows already from Theorem \ref{Pequalscb}, of which the authors of \cite{medinimilovich} were not aware. Also notice that Theorem \ref{main} answers \cite[Question 4]{medinimilovich} (which asks whether a $\CDH$ ultrafilter is necessarily $\CB$).

We also remark that Theorem \ref{main} and Corollary \ref{notcdhzfc} answer two questions of Hern\'andez-Guti\'errez and Hru\v{s}\'ak, and clarify a third. More precisely, the equivalence $(\ref{nonmeagerPfilter})\leftrightarrow (\ref{cdh})$ answers \cite[Question 1.8]{hernandezgutierrezhrusak} (which asks for a combinatorial
characterization of $\CDH$ filters), while Corollary \ref{notcdhzfc} answers the second part of \cite[Question 1.9]{hernandezgutierrezhrusak}. The first part of
\cite[Question 1.9]{hernandezgutierrezhrusak} asks whether the existence of a $\CDH$ filter can be proved in $\ZFC$. By Theorem \ref{main}, this is equivalent to Question \ref{nonmeagerPquestion}, which is a long-standing open problem (see the introduction).

Finally, it is natural to ask whether Theorem 10 can be improved. As we have seen, none of the equivalences can be extended to arbitrary subspaces of $2^\omega$. However, we do not know to what extent the combinatorial assumptions on $\FF$ can be relaxed. Recall that a collection $\FF\subseteq\PP(\omega)$ is a \emph{semifilter} if $\COF\subseteq\FF$ and $\FF$ is closed under supersets and finite modifications of its elements (see \cite{banakhzdomskyy}).
\begin{question}
Exactly which fragments of Theorem \ref{main} remain valid for semifilters?\footnote{\,It is not clear what the analogue of $\mathsf{P}$-filter should be for semifilters. The following is a plausible candidate. Define a semifilter $\FF$ to be a \emph{$\mathsf{P}$-semifilter} if every sequence $x_0\supseteq x_1\supseteq\cdots$ consisting of elements of $\FF$ has a pseudointersection in $\FF$. Notice that a filter is a $\mathsf{P}$-semifilter if and only if it is a $\mathsf{P}$-filter. Furthermore, it is easy to realize that the proof of the implication $(\ref{completelybaire})\rightarrow(\ref{nonmeagerPfilter})$ would generalize to this context.} For semifilters with the finite intersection property?
\end{question}

\section{How to obtain $\cccc$ types of countable dense subsets}

The main results of this section (Theorem \ref{hrusakvanmillctypes} and Theorem \ref{hrusakvanmillnotbaire}) give conditions under which a space is guaranteed to have $\cccc$ types of countable dense subsets, and are essentially due to Hru\v{s}\'ak and van Mill. In fact, even if they did not explicitly notice them, their proofs are taken almost verbatim from the proof of \cite[Theorem 4.5]{hrusakvanmill}. We decided to keep the weaker Proposition \ref{notcdh} as well, since its proof seems particularly transparent.

The first of these conditions involves spaces that contain a dense $\CB$ subspace. While it is easy to see that any space containing a dense Baire subspace must be Baire, a space containing a dense $\CB$ subspace need not be $\CB$. Consider for example the subspace $C\cup Q$ of $2^\omega\times 2^\omega$, where $C=2^\omega\times (2^\omega\setminus\{x\})$ and $Q$ is a countable dense subset of $2^\omega\times\{x\}$ for some fixed $x\in 2^\omega$.

Lemma \ref{nonhomeocountable} first appeared in \cite{mazurkiewicz}, then Brian, van Mill and Suabedissen gave a new proof (see \cite[Lemma 14]{brianvanmillsuabedissen} or \cite[Lemma 4.3]{hrusakvanmill}).
 
\begin{lemma}[Mazurkiewicz, Sierpi\'{n}ski]\label{nonhomeocountable}
There exists a family $\CC$ of size $\cccc$ consisting of pairwise
non-homeomorphic countable spaces.
\end{lemma}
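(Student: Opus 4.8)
The plan is to exhibit an injection $A\mapsto X_A$ from $\PP(\omega)$ into the homeomorphism classes of countable spaces; since $|\PP(\omega)|=\cccc$ and $A\mapsto X_A$ will be injective up to homeomorphism, the image $\CC=\{X_A:A\subseteq\omega\}$ is the desired family of size $\cccc$. The subtlety to keep in mind is that the scattered compact countable spaces (the $\omega^\alpha\cdot n+1$ of the Mazurkiewicz--Sierpi\'nski classification) only furnish $\aleph_1$ homeomorphism types, so in order to reach $\cccc$ I would encode an arbitrary \emph{subset} of $\omega$ rather than a single countable ordinal. The idea is to attach to each point of a copy of $\QQQ$ a local topological signature that reads off a natural number, and then to use the \emph{set} of signatures that actually occur as the invariant distinguishing the spaces $X_A$.

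For the construction, I would first build, for each $n\in\omega$, a countable metrizable space $B^{(n)}$ whose perfect kernel (the largest dense-in-itself subspace, equivalently $\bigcap_\alpha (B^{(n)})^{(\alpha)}$, where $(\cdot)^{(\alpha)}$ denotes the iterated Cantor--Bendixson derivative) is a copy of $\QQQ$ sitting at Cantor--Bendixson \emph{height} exactly $n$. Concretely, set $B^{(0)}=\QQQ$, and obtain $B^{(n+1)}$ from $B^{(n)}$ by adjoining a countable discrete set $I$ of isolated points accumulating precisely onto $B^{(n)}$, so that $I$ is the set of isolated points of $B^{(n+1)}$ and $(B^{(n+1)})'=B^{(n)}$. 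An easy induction then gives $(B^{(n)})^{(n)}=\QQQ$, while $(B^{(n)})^{(\beta)}$ still carries a nontrivial scattered halo around each kernel point for every $\beta<n$. Finally I set $X_A=\bigsqcup_{n\in A}B^{(n)}$, realized as a subspace of $\RRR$ (or of the Hilbert cube) by placing the clopen summands in disjoint pieces; this is a countable separable metrizable space.

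The key invariant is the local scattered height: for a point $x$ in the perfect kernel $P$ of a countable space $X$, let $\lambda(x)$ be the least ordinal $\beta$ such that $x$ has a neighborhood $U$ with $U\cap X^{(\beta)}\subseteq P$. Since $P$, the derivatives $X^{(\beta)}$, and neighborhoods are all preserved by homeomorphisms, $\lambda$ is a topological invariant, and hence so is $\{\lambda(x):x\in P\}$. Because the summands $B^{(n)}$ are clopen in $X_A$, the kernel is $P=\bigsqcup_{n\in A}(\text{kernel of }B^{(n)})$ and the value of $\lambda$ at a kernel point of $B^{(n)}$ is computed inside $B^{(n)}$ alone, where the construction forces it to equal $n$; thus $\{\lambda(x):x\in P\}=A$, and $X_A\approx X_B$ forces $A=B$. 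The main obstacle is precisely this last verification: I must check carefully that the inductive construction of $B^{(n)}$ makes the scattered halo around each kernel point disappear exactly at the $n$-th derivative (so that $\lambda\equiv n$ there) and no sooner, and that passing to the free union creates no spurious kernel points and does not alter these local heights. Both facts follow from each $B^{(n)}$ being clopen in $X_A$, but they deserve a careful induction on $n$.
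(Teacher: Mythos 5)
Your proof is correct; note that the paper itself does not prove Lemma \ref{nonhomeocountable} at all, but simply cites \cite{mazurkiewicz} and the newer proof in \cite{brianvanmillsuabedissen} (see also \cite[Lemma 4.3]{hrusakvanmill}), so there is no in-paper argument to compare against. Your construction is in the classical Mazurkiewicz--Sierpi\'nski spirit: since compact scattered countable spaces yield only $\aleph_1$ homeomorphism types, one encodes an arbitrary $A\subseteq\omega$ into the set of local Cantor--Bendixson heights along a copy of $\QQQ$, and your invariant $\lambda$ does exactly that. Two points deserve emphasis. First, the phrase ``accumulating precisely onto $B^{(n)}$'' must be read as $\overline{I}\setminus I=B^{(n)}$, i.e.\ \emph{every} point of $B^{(n)}$ is a limit of $I$; the weaker requirement $(B^{(n+1)})'=B^{(n)}$ alone would be satisfied if $I$ accumulated only on the isolated points of $B^{(n)}$, and then the scattered halo at kernel points could vanish after fewer than $n$ derivatives, breaking $\lambda\equiv n$. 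You stated the stronger version, and with it the computation goes through: for $x$ in the kernel of $B^{(n)}$ and $\beta<n$, the first layer $I_1$ survives in $(B^{(n)})^{(\beta)}=\QQQ\cup I_1\cup\cdots\cup I_{n-\beta}$ and clusters at $x$, so no neighborhood $U$ satisfies $U\cap X^{(\beta)}\subseteq P$, while $\beta=n$ works since $(B^{(n)})^{(n)}=\QQQ$. Second, the routine facts you flag are indeed routine: the $B^{(n)}$ are realizable (embed $B^{(n-1)}$ in the Cantor set inside $\RRR$ and choose a discrete set of points in the complementary intervals converging to each of its points), derivatives and the perfect kernel split across the clopen summands of $X_A$, and the CB process of a countable space stabilizes exactly at the perfect kernel, so $\lambda$ is well defined and homeomorphism-invariant; hence $\{\lambda(x):x\in P\}=A$ recovers $A$. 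The only cosmetic issue is $A=\varnothing$ (giving the empty space), which you may simply exclude, as nonempty subsets of $\omega$ still give $\cccc$ pairwise non-homeomorphic countable separable metrizable spaces --- exactly what the lemma, and its use in Theorem \ref{hrusakvanmillctypes} (where one additionally embeds the members of $\CC$ into $\QQQ$, possible since $\QQQ$ is universal for countable metrizable spaces), requires.
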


\begin{proposition}\label{notcdh}
Assume that $X$ is not $\CB$ but has a dense $\CB$ subspace $C$. Then $X$ is not $\CDH$.
\end{proposition}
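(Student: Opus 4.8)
The plan is to produce two countable dense subsets $D$ and $E$ of $X$ and show that no autohomeomorphism of $X$ can carry one to the other, distinguishing them by an invariant extracted from Hurewicz's theorem (Theorem \ref{hurewicz}). The invariant I would attach to a countable set $A\subseteq X$ is the property ``\emph{$A$ contains a subset that is homeomorphic to $\QQQ$ and closed in $X$}''. This is manifestly preserved by autohomeomorphisms: if $h\colon X\to X$ is a homeomorphism and $F\subseteq A$ is a closed copy of $\QQQ$ in $X$, then $h[F]\subseteq h[A]$ is again a closed copy of $\QQQ$ in $X$. So it suffices to exhibit one countable dense set enjoying this property and another lacking it, since $\CDH$ would demand a homeomorphism matching them.

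For the set \emph{with} the property I would use that $X$ is not $\CB$: by Theorem \ref{hurewicz}, $X$ contains a closed copy $P$ of $\QQQ$. Fixing any countable dense $D_1\subseteq X$ (available since $X$ is separable metrizable) and setting $E=P\cup D_1$, I get a countable dense set that contains the closed copy $P$ of $\QQQ$. For the set \emph{without} the property I would use the dense $\CB$ subspace $C$: being a subspace of a separable metrizable space, $C$ is separable, so it has a countable dense subset $D$, which is then dense in $X$ as well (the closure of $D$ contains $C$, and $C$ is dense in $X$). The key claim is that $D$ contains no closed-in-$X$ copy of $\QQQ$: if $F\subseteq D\subseteq C$ were closed in $X$ and homeomorphic to $\QQQ$, then $F$ would be closed in $C$ (a set that is closed in $X$ and contained in $C$ is closed in $C$), producing a closed copy of $\QQQ$ inside the $\CB$ space $C$, contradicting Theorem \ref{hurewicz}.

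To finish, I would argue by contradiction: if $h\colon X\to X$ is a homeomorphism with $h[D]=E$, then $h^{-1}[P]\subseteq D$ is a closed copy of $\QQQ$ in $X$, contradicting the claim of the previous paragraph. Hence no autohomeomorphism sends $D$ to $E$, so $X$ is not $\CDH$. The conceptual heart of the argument, and the only step requiring real insight, is isolating the correct invariant and applying Hurewicz's theorem twice in opposite directions—once to $X$ (not $\CB$, hence a closed copy of $\QQQ$ exists) and once to $C$ ($\CB$, hence a countable dense subset of it carries no such closed copy). Everything else—density of $D$ in $X$, and the passage from ``closed in $X$ and inside $C$'' to ``closed in $C$''—is routine and I would not belabor it.
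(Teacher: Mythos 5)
Your proposal is correct and follows essentially the same route as the paper: the paper likewise takes $D$ to be a countable dense subset of the $\CB$ subspace $C$, uses Theorem \ref{hurewicz} to fix a closed copy of $\QQQ$ in $X$ and extends it to a countable dense set $E$, and then asserts that no autohomeomorphism can send $D$ to $E$. The only difference is one of exposition—the paper compresses into a single ``clearly'' the invariant you spell out (containing a closed-in-$X$ copy of $\QQQ$, preserved by autohomeomorphisms, present in $E$ but ruled out for $D\subseteq C$ by a second application of Hurewicz's theorem to $C$)—so your write-up is a faithful, fully detailed version of the same argument.
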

\begin{proof}
Let $D$ be a countable dense subset of $C$. By Theorem \ref{hurewicz}, we can fix a closed copy $Q$ of $\QQQ$ in $X$. Now extend $Q$ to a countable dense subset $E$ of $X$. Clearly, there is no homeomorphism $h:X\longrightarrow X$ such that $h[D]=E$.
\end{proof}

\begin{theorem}\label{hrusakvanmillctypes}
Assume that $X$ is not $\CB$ but has a dense $\CB$ subspace $C$. Then $X$ has $\cccc$ types of countable dense subsets.
\end{theorem}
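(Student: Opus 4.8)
The plan is to exploit the contrast, already visible in the proof of Proposition \ref{notcdh}, between the dense $\CB$ subspace $C$ and a closed copy of $\QQQ$ that non-$\CB$-ness forces into $X$. First I would fix a countable dense subset $D$ of $C$ and isolate the engine behind Proposition \ref{notcdh}: since $D\subseteq C$ and $C$ is $\CB$, every subset of $D$ that is closed in $X$ is a closed subspace of $C$, hence $\CB$; being countable it is therefore scattered. In particular $D$ contains \emph{no} copy of $\QQQ$ that is closed in $X$. On the other side, Theorem \ref{hurewicz} hands us a closed copy $Q$ of $\QQQ$ in $X$, and every subspace of $Q$ that is closed in $Q$ is automatically closed in $X$.

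Next I would feed in the $\cccc$ pairwise non-homeomorphic countable (metrizable) spaces $\{S_\alpha:\alpha<\cccc\}$ from Lemma \ref{nonhomeocountable}. Using the classical fact that every countable metrizable space is homeomorphic to a closed subspace of $\QQQ$, and modifying the family if necessary so that each $S_\alpha$ is non-scattered while the family stays pairwise non-homeomorphic, I would realize each $S_\alpha$ as a subspace $S_\alpha^\ast$ of $Q$ that is closed in $Q$, and hence closed in $X$. Put $E_\alpha=D\cup S_\alpha^\ast$. Each $E_\alpha$ is a countable dense subset of $X$, the density coming entirely from $D$. It then suffices to show that $\alpha\neq\beta$ forces $E_\alpha$ and $E_\beta$ into different types; this produces $\cccc$ types and proves the theorem.

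For the separation I would argue by contradiction: a self-homeomorphism $h$ of $X$ with $h[E_\alpha]=E_\beta$ would carry the $X$-closed set $S_\alpha^\ast$ to an $X$-closed subset $A=h[S_\alpha^\ast]$ of $E_\beta$ with $A\approx S_\alpha$. The decisive sub-lemma, which I would prove first, is that the perfect kernel $A^{(\infty)}$ of \emph{any} $X$-closed $A\subseteq E_\beta$ lands inside $S_\beta^\ast$: the set $A\cap D=A\setminus S_\beta^\ast$ is open in $A$, so if it met $A^{(\infty)}$ (a copy of $\QQQ$, hence zero-dimensional) it would contain a nonempty clopen crowded piece that lies in $D$ and is closed in $X$, contradicting the first paragraph. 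The main obstacle is to upgrade this from the perfect kernel to the \emph{full} homeomorphism type of $S_\alpha$, and hence to $S_\alpha\approx S_\beta$. The difficulty is genuine: the scattered part of $A$ can leak into $D$, and $D$ by itself already realizes a rich supply of scattered $X$-closed subsets, so no single closed set can be read off canonically. The recovery must therefore be extracted from the whole lattice of $X$-closed subsets of $E_\alpha$, together with their homeomorphism types and the way they cluster around the $X$-closed copy $(S_\alpha^\ast)^{(\infty)}$ of $\QQQ$ — this is exactly the Cantor--Bendixson-style bookkeeping carried out in the proof of \cite[Theorem 4.5]{hrusakvanmill}, and I would follow that analysis, choosing the family $\{S_\alpha\}$ so that each member is rigidly tied to its perfect kernel and the reconstruction becomes faithful.
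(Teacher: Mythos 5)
Your first paragraph is sound and is in fact the exact engine of the paper's proof: since $C$ is $\CB$, no subset of $D\subseteq C$ that is closed in $X$ can be a copy of $\QQQ$, and your perfect-kernel sublemma for $X$-closed $A\subseteq E_\beta$ goes through (modulo one tacit point you should make explicit: you need $D\cap Q=\varnothing$ for $D$ to be open in $E_\beta$, and this is arranged by observing that $X$ is Baire, having a dense Baire subspace, so $Q$ is nowhere dense). But the proposal has a genuine gap at the decisive step, and your plan to close it cannot work as stated. Any two crowded countable metrizable spaces are homeomorphic (both are $\QQQ$), so the $\cccc$ spaces from Lemma \ref{nonhomeocountable} are necessarily distinguished by their \emph{scattered} structure; your sublemma pins down only the perfect kernel of $h[S_\alpha^\ast]$, which carries no distinguishing information whatsoever, while the scattered layers --- the only parts that separate $S_\alpha$ from $S_\beta$ --- are exactly what can leak into $D$, and dually $h[D]$ may swallow scattered points of $S_\beta^\ast$. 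Asking for a family whose members are ``rigidly tied to their perfect kernel'' is a hope, not an argument: nothing in the proposal recovers $S_\alpha^\ast$ or $S_\beta^\ast$ from the pair $(X,E_\beta)$, so $S_\alpha\approx S_\beta$ cannot be extracted. You also misremember the mechanism of \cite[Theorem 4.5]{hrusakvanmill}: it contains no Cantor--Bendixson bookkeeping or reconstruction from the lattice of closed sets.

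What that proof (and the paper's) actually does is sidestep reconstruction by encoding $A$ \emph{negatively} rather than positively: realize each $A\in\CC$ as a \emph{nowhere dense} subspace of $Q$ (possible since $Q\approx\QQQ\approx\QQQ^2$), and set $D_A=(Q\setminus A)\cup D$ instead of your $E_\alpha=D\cup S_\alpha^\ast$. If $h[D_A]=D_B$, then an argument of exactly the kind in your first paragraph shows $h[Q\setminus A]\subseteq Q$: if some $x\in Q\setminus A$ had $h(x)\notin Q$, continuity gives a neighborhood $U$ of $x$ in $Q\setminus A$ with $h[U]\subseteq D$, and nowhere density of $A$ in $Q$ yields a non-empty open $V\subseteq Q$ with $\cl(V)\subseteq U\setminus\cl(A)$, so $h[\cl(V)]\subseteq D\subseteq C$ is a closed copy of $\QQQ$, contradicting that $C$ is $\CB$. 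Hence $h[Q\setminus A]=Q\setminus B$, and --- this is what the complementation buys --- since $A$ and $B$ are nowhere dense, $\cl(Q\setminus A)=\cl(Q\setminus B)=Q$, so $h[Q]=Q$ and therefore $h[A]=B$ \emph{on the nose}; pairwise non-homeomorphism of $\CC$ now applies directly to $A$ and $B$, scattered parts and all. In short: keep your first paragraph, discard the positive encoding, and replace $E_\alpha$ by $D_A=(Q\setminus A)\cup D$; that single change turns your sketch into a complete proof.
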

\begin{proof}
By Theorem \ref{hurewicz}, we can fix a closed copy $Q$ of $\QQQ$ in $X$. Since $X$ is a Baire space (because it has a dense Baire subspace), $Q$ must be nowhere dense. Therefore, it is easy to obtain a countable dense subset $D$ of $X$ such that $D\subseteq C$ and $D\cap Q=\varnothing$.

By Lemma \ref{nonhomeocountable}, there exists a family $\CC$ of size $\cccc$ consisting of pairwise non-homeomorphic countable spaces. Since $Q\approx\QQQ\approx\QQQ^2$, we can also assume that every member of $\CC$ is a nowhere dense subspace of $Q$. For every $A\in\CC$, define $D_A=(Q\setminus
A)\cup D$. We claim that $D_A$ and $D_B$ are countable dense subsets of a different type whenever $A,B\in\CC$ and $A\neq B$.

Assume that $A,B\in\CC$ are such that there exists a homeomorphism $h:X\longrightarrow X$ such that $h[D_A]=D_B$. We will show that $A=B$. Assume, in order to get a contradiction, that $h(x)\notin Q$ for some $x\in Q\setminus A$. Since $D$ is a neighborhood of $h(x)$ in $D_B$, by continuity there exists a
neighborhood $U$ of $x$ in $Q\setminus A$ such that $h[U]\subseteq D$. Notice that $U\setminus\cl(A)$ is a non-empty open subset of $Q$ because $A$ is nowhere dense in $Q$. So there exists a non-empty open subset $V$ of $Q$ such that $V\subseteq\cl(V)\subseteq U\setminus\cl(A)\subseteq Q\setminus A$. It follows that $\cl(V)$ is a
copy of $\QQQ$ that is closed in $X$. But then $h[\cl(V)]\subseteq
D\subseteq C$ is also a copy of $\QQQ$ that is closed in $X$, which
contradicts the fact that $C$ is $\CB$.

So $h[Q\setminus A]\subseteq Q$. Since $h[D_A]=D_B$, we must have $h[Q\setminus A]\subseteq Q\setminus B$. A similar reasoning yields $h^{-1}[Q\setminus B]\subseteq Q\setminus A$. Therefore $h[Q\setminus A]= Q\setminus B$. Notice that $\cl(Q\setminus A)=\cl(Q\setminus B)=Q$. Since $h$ is a homeomorphism, it follows that $h[Q]=Q$. Hence $h[A]=B$, which concludes the proof.
\end{proof}

We will say that a space $X$ has the \emph{perfect set property for open sets} (briefly, $\Po$) if every uncountable open subset of $X$ contains a copy of $2^\omega$. Lemma \ref{meagerdenseGdelta} first appeared as \cite[Lemma 3.2]{fitzpatrickzhou}.

\begin{lemma}[Fitzpatrick, Zhou]\label{meagerdenseGdelta}
Every meager space has a countable dense
$\Gd$ subset.
\end{lemma}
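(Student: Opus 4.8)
The plan is to produce an explicit countable dense subset $D\subseteq X$ whose complement is $F_\sigma$, which is exactly the assertion that $D$ is $\Gd$. Since $X$ is meager, I would first write $X=\bigcup_{n\in\omega}F_n$ with each $F_n$ closed and nowhere dense; replacing $F_n$ by $\bigcup_{k\le n}F_k$ (a finite union of closed nowhere dense sets is again closed nowhere dense) I may assume the sequence is increasing, and I set $F_{-1}=\varnothing$. Fix a compatible metric $\rho$ on $X$ and a countable base $\{V_i:i\in\omega\}$ of non-empty open sets.

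The key idea is to choose the points of $D$ so that they enter the sets $F_n$ ``as late as possible''. Recursively, for each $i\in\omega$ I would pick $d_i\in V_i\setminus F_{i-1}$, which is possible because $F_{i-1}$ is nowhere dense and $V_i$ is non-empty open, so $V_i\setminus F_{i-1}\neq\varnothing$. Let $D=\{d_i:i\in\omega\}$. Density is immediate: since $\{V_i\}$ is a base and $d_i\in V_i$, the set $D$ meets every non-empty open subset of $X$. The crucial consequence of the choice $d_i\notin F_{i-1}$ is that $d_i\in F_n$ forces $i\le n$; hence $D\cap F_n\subseteq\{d_0,\dots,d_n\}$ is finite, and in particular closed in $F_n$ (thus in $X$), for every $n$.

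It then remains to check that $X\setminus D$ is $F_\sigma$. For $n,m\in\omega$ I would set $A_{n,m}=\{x\in F_n:\rho(x,d)\ge 1/m\text{ for all }d\in D\cap F_n\}$, which is $F_n$ minus the union of the balls $B(d,1/m)$ over the finitely many $d\in D\cap F_n$; since that union is open, each $A_{n,m}$ is closed in $X$. Because $D\cap F_n$ is finite and closed, a point $x\in F_n$ fails to lie in $D\cap F_n$ precisely when $\rho(x,D\cap F_n)>0$, i.e.\ when $x\in A_{n,m}$ for some $m$. Therefore $F_n\setminus D=\bigcup_m A_{n,m}$, and unioning over $n$ yields $X\setminus D=\bigcup_{n}(F_n\setminus D)=\bigcup_{n,m}A_{n,m}$, an $F_\sigma$ set. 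Hence $D$ is a countable dense $\Gd$ subset of $X$.

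The only genuinely delicate point, and the step I expect to be the crux, is the selection of $D$: an arbitrary countable dense set will not do, since its trace on some $F_n$ may be dense in $F_n$, in which case $F_n\setminus D$ does not break up into the required closed pieces. The ``rank'' device of demanding $d_i\notin F_{i-1}$, which makes each $D\cap F_n$ finite, is exactly what forces the complement to decompose into closed sets; once that is arranged, the remaining verifications are routine.
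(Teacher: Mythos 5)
Your proof is correct, and all the key steps check out: the choice $d_i\in V_i\setminus F_{i-1}$ is possible because $F_{i-1}$ is closed nowhere dense (having first replaced the meager decomposition by the increasing sequence of closures), the monotonicity of the $F_n$ does force $D\cap F_n\subseteq\{d_0,\dots,d_n\}$, and the sets $A_{n,m}$ are genuinely closed since they delete only a \emph{finite} union of open balls from the closed set $F_n$, so $X\setminus D=\bigcup_{n,m}A_{n,m}$ is $F_\sigma$ and $D$ is a countable dense $\Gd$ set. One point worth noting is that the paper itself gives no proof of this lemma: it is stated with attribution and the citation \cite[Lemma 3.2]{fitzpatrickzhou}, so there is no in-paper argument to compare against. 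Your argument is essentially the standard (and, up to presentation, the original Fitzpatrick--Zhou) one; your closing remark correctly identifies the crux, namely that an arbitrary countable dense set fails because its trace on some $F_n$ can be dense in $F_n$, and that the ``enter $F_n$ late'' device making each $D\cap F_n$ finite is exactly what lets the complement decompose into closed pieces. The only cosmetic simplification available is to observe directly that $F_n\setminus D=F_n\setminus(D\cap F_n)$ is open in the closed set $F_n$, hence $F_\sigma$ in $X$ (open subsets of metrizable spaces are $F_\sigma$), which renders the explicit $A_{n,m}$ unnecessary --- but your version is a correct unwinding of that same fact.
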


\begin{theorem}\label{hrusakvanmillnotbaire}
Assume that $X$ has the $\Po$ but is not a Baire space. Then $X$ has $\cccc$ types of countable dense subsets.
\end{theorem}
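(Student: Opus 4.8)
The plan is to reuse the perturbation scheme of Theorem \ref{hrusakvanmillctypes}: encode the $\cccc$ pairwise non-homeomorphic countable spaces supplied by Lemma \ref{nonhomeocountable} as modifications of a single ``fragile'' copy of $\QQQ$ sitting inside $X$, and then recover each such space, up to homeomorphism, from the type of the associated countable dense set. Since $X$ is not a Baire space it is not $\CB$ (being a non-Baire closed subspace of itself), so Hurewicz's theorem (Theorem \ref{hurewicz}) yields a copy $Q$ of $\QQQ$ that is closed in $X$. I would realize each $A$ in the Mazurkiewicz--Sierpi\'nski family $\CC$ as a nowhere dense subset of $Q$, fix a countable dense $D\subseteq X$ with $D\cap Q=\varnothing$, and set $D_A=(Q\setminus A)\cup D$. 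As in Theorem \ref{hrusakvanmillctypes}, $D=D_A\cap(X\setminus Q)$ is relatively open in $D_A$ because $Q$ is closed in $X$, so if $h$ is a self-homeomorphism with $h[D_A]=D_B$ that moved some point of $Q\setminus A$ into $D$, continuity would produce a copy $\cl(V)\approx\QQQ$ closed in $X$ with $\cl(V)\subseteq Q\setminus A\subseteq D_A$ and $h[\cl(V)]\subseteq D$. Ruling this out forces $h[Q]=Q$, whence $h$ induces a bijection of the countable set $Q$ carrying $Q\setminus A$ onto $Q\setminus B$, hence $A$ onto $B$; as the members of $\CC$ are pairwise non-homeomorphic, this produces $\cccc$ distinct types.

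The two hypotheses enter precisely in making this last ``escape'' impossible. In Theorem \ref{hrusakvanmillctypes} the conclusion $h[Q]=Q$ was free because the ambient space had a dense $\CB$ subspace $C\supseteq D$, and then $h[\cl(V)]\subseteq C$ was a copy of $\QQQ$ closed in $C$, contradicting Hurewicz. In the present setting no such dense $\CB$ subspace need exist (for instance $\QQQ$ and $2^\omega\times\QQQ$ satisfy the hypotheses but, being crowded, have no dense $\CB$ — indeed no dense Baire — subspace), so I cannot route the argument through $C$. What I actually need is the strictly weaker statement that the fixed background $D$ contains \emph{no} copy of $\QQQ$ that is closed in $X$; this already yields the desired contradiction with $h[\cl(V)]\subseteq D$. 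This is exactly where the $\Po$ should be used: under the $\Po$ every uncountable open set carries copies of $2^\omega$, i.e. compacta, and since a copy of $\QQQ$ closed in $X$ would be a countable closed \emph{crowded} set, I would build $D$ so that it meets each such potential closed copy in a scattered set, re-routing $D$ through the perfect-set structure furnished by the $\Po$. Fitzpatrick--Zhou (Lemma \ref{meagerdenseGdelta}), applied to a non-empty open meager $U$ witnessing non-Baireness, provides the countable dense $\Gd$ bookkeeping set inside $U$ that I would use both to locate $Q$ and to organize this construction of $D$.

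The hard part will be this construction of a suitable background: a countable dense $D\subseteq X\setminus Q$ containing no copy of $\QQQ$ closed in $X$. Since $X$ is not $\CB$, such closed copies of $\QQQ$ genuinely exist (one of them is $Q$ itself), so they cannot be avoided globally and must instead be dodged \emph{inside} $D$, simultaneously keeping $D$ dense. I expect to first reduce to the crowded case — isolated points belong to every dense set and are merely permuted by homeomorphisms, so they do not affect the count — and then to split according to whether $X$ has a non-empty countable open set (handled directly, in the spirit of the recovery ``$A=X\setminus D_A$'' valid when the relevant region is countable) or whether every non-empty open set is uncountable (where the $\Po$ saturates $X$ with compacta and drives the construction of $D$). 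Carrying out this dodging argument cleanly, and verifying that it forces $h[Q]=Q$, is the step that genuinely departs from, and is more delicate than, the proof of Theorem \ref{hrusakvanmillctypes}.
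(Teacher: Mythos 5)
Your plan hinges on one lemma that you explicitly leave open --- the existence of a countable dense $D\subseteq X\setminus Q$ containing no copy of $\QQQ$ closed in $X$ --- and this lemma is not just hard, it is false in general, so the approach cannot be completed. Take $X=2^\omega\times\QQQ$: it has the $\Po$ (every non-empty open set contains some $[s]\times\{q\}\approx 2^\omega$), it is meager in itself, and every non-empty open subset is uncountable, so it lands squarely in the case you identify as the delicate one. But \emph{every} countable dense $D\subseteq X$ contains a closed-in-$X$ copy of $\QQQ$: working in the Polish superspace $2^\omega\times\RRR$, fix an enumeration $\{c_j:j\in\omega\}$ of $\QQQ$ and recursively choose $p_\sigma\in D$ for $\sigma\in\omega^{<\omega}$ by a Luzin-scheme construction with $p_{\sigma^\frown n}\to p_\sigma$, confining second coordinates below the node $\sigma$ to nested closed rational-endpoint intervals $J_\sigma$ that omit $c_j$ for all $j<|\sigma|$ (density of $D$ makes each choice possible). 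Then $P=\{p_\sigma:\sigma\in\omega^{<\omega}\}$ is crowded, and every accumulation point of $P$ other than some $p_\sigma$ is a branch limit: its first coordinate converges by compactness of $2^\omega$, while its second coordinate lies in $\bigcap_k J_{\beta\upharpoonright k}$ and is therefore irrational, i.e.\ the limit falls outside $X$. Hence $\cl(P)\cap X=P$, and $P\subseteq D$ is a closed copy of $\QQQ$. Note also that ``routing $D$ through the perfect-set structure furnished by the $\Po$'' cannot rescue this: lying inside a countable union of Cantor sets does not obstruct closed copies of $\QQQ$ (already $\{z\}\times\QQQ\subseteq\bigcup_n 2^\omega\times\{q_n\}$), since a countable crowded space is meager in itself anyway and so decomposes into compact nowhere dense pieces for free; and since there are $\cccc$ many potential closed copies of $\QQQ$, no countable recursion can dodge them individually --- a structural property of $D$ would be required, and the example shows none is available. (A smaller glossed point: your $Q$ need not be nowhere dense once $X$ is not Baire --- for $X=\QQQ$ one can have $Q=X$ --- so even fixing $D$ dense with $D\cap Q=\varnothing$ already needs the case split you only sketch.)

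The paper's actual proof abandons closed copies of $\QQQ$ altogether in the uncountable-open case and uses a different recognition invariant. After reducing to crowded $X$ and handling the countable-open case via the canonical (hence homeomorphism-invariant) set $V=\bigcup\{U:U\text{ countable open}\}\approx\QQQ$, it fixes a non-empty open meager $U$ with $X\setminus\cl(U)\neq\varnothing$, a Cantor set $K\subseteq U$ given by the $\Po$ (automatically nowhere dense), and a regular open meager $W\subseteq U$ with $K\subseteq\cl(W)\setminus W$. Lemma \ref{meagerdenseGdelta} provides a countable dense $\Gd$ set $D\subseteq W$, while outside $\cl(W)$ one spreads $E=\bigcup_n E_n$ with $E_n$ countable dense in a Cantor set $K_n\subseteq U_n$ for a base $\{U_n\}$ of $X\setminus\cl(W)$, so that $O\cap E$ is never $\Gd$ in any non-empty open $O\subseteq X\setminus\cl(W)$. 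With $D_A=D\cup A\cup E$ for $A\in\CC$ taken as arbitrary countable subspaces of $K$, any homeomorphism $h$ with $h[D_A]=D_B$ must satisfy $h[W]\subseteq\cl(W)$ (a subset of the $\Gd$ set $D$ stays $\Gd$, whereas $O\cap E$ never is), and regular-openness upgrades this to $h[W]=W$, whence $h[\cl(W)\setminus W]=\cl(W)\setminus W$ and $h[A]=B$. So the distinguishing family lives on a compactum sitting on the boundary of a meager regular open set, and the rigidity comes from the $\Gd$/non-$\Gd$ asymmetry rather than from Hurewicz-style closed copies of $\QQQ$ --- this is the mechanism your proposal is missing and, by the example above, cannot be replaced by the one you propose.
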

\begin{proof}
Write $X$ as the disjoint union $S\cup C$, where $S$ is scattered open and $C$ is crowded. Notice that $C$ has the $\Po$ because $S$ is countable. Since $C$ is invariant under every homeomorphism of $X$, if $C$ has $\cccc$ types of countable dense subsets then the same is true for $X$. Furthermore, using the fact that every meager open subset of $X$ is disjoint from $\cl(S)$, it is easy to check that $C$ is not Baire. Therefore, we can assume without loss of generality that $X$ is crowded.

First assume that some non-empty open subset of $X$ is countable. Then 
$$
V=\bigcup\{U:U\text{ is a countable open subset of }X\}
$$
is a non-empty countable open subset of $X$. Since $X$ is crowded, it follows that $V$ is crowded, hence $V\approx\QQQ$. As in the proof of Theorem \ref{hrusakvanmillctypes}, there exists a family $\CC$ of size $\cccc$ consisting of pairwise non-homeomorphic countable nowhere dense subspaces of $V$. Fix a countable dense subset $D$ of $X\setminus V$. For every $A\in\CC$, define $D_A=(V\setminus
A)\cup D$. Since $V$ is invariant under every homeomorphism of $X$, it is clear that $D_A$ and $D_B$ are countable dense subsets of a
different type whenever $A,B\in\CC$ and $A\neq B$.

Now assume that every non-empty open subset of $X$ is uncountable. Since $X$ is not Baire, there exists a non-empty open subset $U$ of $X$ such that $U$ is meager and $X\setminus\cl(U)$ is non-empty. Since $X$ has the $\Po$, there exists a copy $K$ of $2^\omega$ contained in $U$. Notice that $K$ is nowhere dense because $K$ is compact and $U$ is meager. Therefore, using the compactness of $K$, it is possible to construct a regular open subset $W$ of $X$ such that $W\subseteq U$ and $K\subseteq\cl(W)\setminus W$.

Since $W$ is meager, it contains a countable dense $\Gd$ subset $D$ by Lemma \ref{meagerdenseGdelta}. Fix an open base $\{U_n:n\in\omega\}$ for $X\setminus\cl(W)$. Since $X$ has the $\Po$, each $U_n$ contains a copy $K_n$ of $2^\omega$. Fix a countable dense subset $E_n$ of each $K_n$. Let $E=\bigcup_{n\in\omega}E_n$.
Notice that $O\cap E$ is not a $\Gd$ subset of $O$ for any non-empty open subset $O$ of $X\setminus\cl(W)$, otherwise $K_n\cap E\supseteq E_n$ would be a countable dense $\Gd$ subset of $K_n\approx 2^\omega$ for some $n$.

By Lemma \ref{nonhomeocountable}, there exists a family $\CC$ of size $\cccc$ consisting of pairwise non-homeomorphic countable subspaces of $K$. For every $A\in\CC$, define $D_A=D\cup A\cup E$. We claim that $D_A$ and $D_B$ are countable dense subsets of a
different type whenever $A,B\in\CC$ and $A\neq B$. Assume that $A,B\in\CC$ are such that there exists a homeomorphism
$h:X\longrightarrow X$ such that $h[D_A]=D_B$. We will show that $A=B$. First we will show that $h[W]\subseteq\cl(W)$.

Let $O=h[W]\setminus\cl(W)$. Since $O\subseteq h[W]$, we have
$h^{-1}[O]\subseteq h^{-1}[h[W]]=W$. Therefore
$$
h^{-1}[O\cap E]=h^{-1}[O]\cap h^{-1}[E]\subseteq W\cap h^{-1}[E]\subseteq W\cap
D_A=D.
$$
But $D$ is a countable $\Gd$ subset of $W$ by construction, so
every subset of $D$ is also $\Gd$ in $W$. In particular $h^{-1}[O\cap E]$ is $\Gd$ in $W$, hence in $h^{-1}[O]$. It follows that $O\cap E$ is a $\Gd$ subset of $O$, which implies $O=\varnothing$.

Notice that $h[W]\subseteq\cl(W)$ implies $h[W]\subseteq W$, because $W$ is regular open and $h$ is a homeomorphism. A similar argument shows that $h^{-1}[W]\subseteq W$. Therefore $h[W]=W$, which implies $h[\cl(W)\setminus W]=\cl(W)\setminus W$. Hence $h[A]=B$, which concludes the proof.
\end{proof}

\begin{corollary}\label{meagerfiltersmanytypes}
Let $\FF$ be a meager filter. Then $\FF$ has $\cccc$ types of countable dense subsets.
\end{corollary}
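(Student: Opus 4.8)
The plan is to deduce this from Theorem \ref{hrusakvanmillnotbaire}, so I need to check that a meager filter $\FF$ is not a Baire space and that it has the $\Po$; once both hold, the conclusion is immediate.

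For the first point, I would exploit the fact that $\COF\subseteq\FF$ makes $\FF$ dense in $2^\omega$. Writing $\FF\subseteq\bigcup_{n\in\omega}M_n$ with each $M_n$ closed and nowhere dense in $2^\omega$, density of $\FF$ guarantees that each $M_n\cap\FF$ is nowhere dense in $\FF$: indeed, if some non-empty open $U\subseteq 2^\omega$ satisfied $U\cap\FF\subseteq M_n$, then, since $U\cap\FF$ is dense in $U$, we would get $U\subseteq\cl(U\cap\FF)\subseteq M_n$, contradicting the nowhere density of $M_n$. Hence $\FF=\bigcup_{n\in\omega}(M_n\cap\FF)$ exhibits $\FF$ as meager in itself, and a non-empty space that is meager in itself cannot be Baire.

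For the $\Po$, I would first reduce to basic clopen sets: any uncountable open subset of $\FF$ has the form $W\cap\FF$ with $W$ open in $2^\omega$, hence is a countable union of sets $[s]\cap\FF$ with $s\in 2^{<\omega}$, so one of them, say for $s$ of length $n$, is uncountable. Only countably many cofinite sets lie in $[s]$, so I can fix a coinfinite $g\in\FF\cap[s]$. Then $[n,\omega)\setminus g$ is infinite (as $\omega\setminus g$ is infinite while $[0,n)$ is finite), and every $g\cup y$ with $y\subseteq[n,\omega)\setminus g$ is a superset of $g$, hence lies in $\FF$, and still extends $s$, hence lies in $[s]$. Thus $\{g\cup y:y\subseteq[n,\omega)\setminus g\}\subseteq\FF\cap[s]$ is a copy of $2^\omega$, since the map $y\mapsto g\cup y$ is a continuous injection of the compact space $2^{[n,\omega)\setminus g}\approx 2^\omega$ onto it. As this copy sits inside the given open set, $\FF$ has the $\Po$ (in fact every filter does).

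With both hypotheses verified, Theorem \ref{hrusakvanmillnotbaire} yields that $\FF$ has $\cccc$ types of countable dense subsets. The only real content is the verification of the $\Po$, and the observation that makes it painless is that closure under supersets turns any coinfinite member $g$ of an uncountable piece $\FF\cap[s]$ into a ready-made Cantor set $g\!\uparrow$ lying in the filter; I expect the mild bookkeeping needed to keep this Cantor set inside $[s]$ (by growing $g$ only on $[n,\omega)$) to be the only slightly delicate point.
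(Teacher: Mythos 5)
Your proof is correct and takes essentially the same route as the paper's: both deduce the result from Theorem \ref{hrusakvanmillnotbaire} by showing that every filter has the $\Po$, exhibiting a Cantor set of the form $[s]\cap(g\!\uparrow)$ for a coinfinite $g\in\FF\cap[s]$ (your parametrized set $\{g\cup y:y\subseteq[n,\omega)\setminus g\}$ is exactly this set). The only differences are cosmetic: you obtain a coinfinite $g$ from the uncountability of $\FF\cap[s]$ instead of the paper's case split on $\FF\supsetneq\COF$, and you spell out the fact (left implicit in the paper) that a meager filter, being dense in $2^\omega$, is meager in itself and hence not Baire.
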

\begin{proof}
It will be enough to show that every filter has the $\Po$. This is trivial if
$\FF=\COF$, so assume that $\FF\supsetneq\COF$. Let $U$ be an uncountable open subset of $\FF$. In particular $U\neq\varnothing$, so $[s]\cap\FF\subseteq U$ for some $s\in 2^{<\omega}$. Now pick any coinfinite $z\in\FF$ such that $z\upharpoonright\dom(s)=s$. It is easy to see that $[s]\cap (z\!\uparrow)$ is a copy of $2^\omega$ contained in $U$.
\end{proof}

\section{Non-meager P-subfilters}

Given a function $f:\omega\longrightarrow\omega$ and $\XX\subseteq\PP(\omega)$, define
$$
f(\XX)=\{x\subseteq\omega:f^{-1}[x]\in\XX\}.
$$
Recall that a function $f:\omega\longrightarrow\omega$ is \emph{finite-to-one} if it is surjective and $f^{-1}(n)$ is finite for every $n\in\omega$. It is easy to check that if $f$ is finite-to-one, then $f(\FF)$ is a filter
(respectively ultrafilter) whenever $\FF$ is a filter (respectively
ultrafilter).

We will make use of the following well-known theorem. Recall that $\uuuu<\gggg$ holds, for example, in Miller's model (see \cite[Section 11.9]{blass}).
\begin{theorem}\label{coherence}
Assume $\uuuu<\gggg$. Then there exists a $\mathsf{P}$-point $\UU$ such that for every non-meager filter $\FF$ there exists a finite-to-one $f:\omega\to\omega$ such that $f(\FF)=f(\UU)$.
\end{theorem}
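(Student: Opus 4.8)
The plan is to read the conclusion as a near-coherence statement and to derive it from the Blass--Laflamme theory. Since the assertion ``there is a finite-to-one $f$ with $f(\FF)=f(\UU)$'' is exactly the statement that $\FF$ and $\UU$ are \emph{nearly coherent}, and since near coherence is an equivalence relation on filters (the only non-trivial point being transitivity, which is proved by passing to a common finite-to-one refinement of two interval structures; see \cite{blass}), the whole theorem reduces to chaining a few known consequences of $\uuuu<\gggg$. The three facts I would invoke, all part of the semifilter-trichotomy circle of ideas associated with $\uuuu<\gggg$ (see \cite{blass} and \cite{banakhzdomskyy}), are: the \emph{filter dichotomy} (every filter is either feeble or nearly coherent to an ultrafilter); \emph{near coherence of filters}, i.e. that any two ultrafilters are nearly coherent; and the existence of a $\mathsf{P}$-point under $\uuuu<\gggg$.

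Granting these, the assembly is short. Fix a $\mathsf{P}$-point $\UU$ and let $\FF$ be an arbitrary non-meager filter. A filter is feeble (that is, nearly coherent to $\COF$) if and only if it is meager; this is in essence the contrapositive of Theorem \ref{talagrandcriterion}, since an interval partition witnessing feebleness is precisely a partition for which no element of $\FF$ misses infinitely many pieces. Thus $\FF$ is not feeble, so by the filter dichotomy there is an ultrafilter $\VV$ nearly coherent to $\FF$. By near coherence of filters, $\VV$ is nearly coherent to $\UU$, and by transitivity so is $\FF$. Unravelling the definition, this produces a finite-to-one $f:\omega\to\omega$ with $f(\FF)=f(\UU)$; since $\FF$ was arbitrary, the single $\mathsf{P}$-point $\UU$ witnesses the theorem.

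The genuine work lies entirely in the cited combinatorics rather than in this assembly, and that is where I expect the main obstacle. Establishing the filter dichotomy and near coherence of filters from $\uuuu<\gggg$ is the delicate groupwise-density argument of Blass and Laflamme, and the accompanying transitivity of near coherence rests on the refinement lemma for finite-to-one maps. The one subtlety in the packaging is that the fixed representative must be a $\mathsf{P}$-point: the filter dichotomy alone only lands $\FF$ at \emph{some} ultrafilter, so I rely on near coherence of filters to transport it to a $\mathsf{P}$-point, which is exactly why the separate existence of a $\mathsf{P}$-point under $\uuuu<\gggg$ is needed. All three inputs I would cite to \cite{blass} (and \cite{banakhzdomskyy}) rather than reprove.
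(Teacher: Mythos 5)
Your argument is correct, but it is assembled differently from the paper's. You factor the statement as (filter dichotomy) $+$ (NCF) $+$ (transitivity of near coherence) $+$ (existence of a $\mathsf{P}$-point), all under $\uuuu<\gggg$; each ingredient is a genuine theorem (the dichotomy and trichotomy results of Blass--Laflamme and Laflamme, covered in \cite{blass} and \cite{banakhzdomskyy}), your identification of meager with feeble via Theorem \ref{talagrandcriterion} is right, and your observation that the dichotomy only lands $\FF$ at \emph{some} ultrafilter, so that NCF plus transitivity is needed to transport it to the fixed $\mathsf{P}$-point, is exactly the correct subtlety. The paper instead takes $\UU$ to be \emph{any} ultrafilter generated by a family of size $<\gggg$ (which exists since $\uuuu<\gggg$, and which is automatically a $\mathsf{P}$-point because $\gggg\leq\dddd$ and every ultrafilter with a generating set of size $<\dddd$ is a $\mathsf{P}$-point, by \cite[Proposition 6.27]{blass} and \cite[Theorem 9.25]{blass}), and then observes that the groupwise-density \emph{proof} of \cite[Theorem 9.16]{blass} (NCF from $\uuuu<\gggg$) uses only non-feebleness of the second filter and the small generating set of the first, so run with an arbitrary non-meager $\FF$ it directly yields a finite-to-one $f$ with $f(\FF)=f(\UU)$. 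The trade-off: your route is fully modular and never reopens a proof, but it invokes the filter dichotomy, a strictly stronger consequence of $\uuuu<\gggg$ than NCF and one not proved in \cite{blass} itself, and it additionally needs the refinement lemma behind transitivity; the paper's route is more economical, requiring a single proof inspection in \cite{blass}, dispensing with dichotomy and transitivity entirely, and making the witness canonical (any ultrafilter with a small generating set works, rather than an arbitrary $\mathsf{P}$-point reached by transport). Both arguments ultimately rest on the same Blass--Laflamme groupwise-density machinery.
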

\begin{proof}
Let $\UU$ be any ultrafilter generated by a set $\XX\subseteq\PP(\omega)$ such that $|\XX|<\gggg$. Notice that $\UU$ is a $\mathsf{P}$-point because $\gggg\leq\dddd$ by \cite[Proposition 6.27]{blass}, and every ultrafilter generated by less than $\dddd$ sets is a $\mathsf{P}$-point by \cite[Theorem 9.25]{blass}. The desired conclusion follows from the proof of \cite[Theorem 9.16]{blass}.
\end{proof}

\begin{proposition}\label{preserve}
Let $\FF$ be a filter, and let $f:\omega\longrightarrow\omega$ be finite-to-one. Notice that $\XX=\{f^{-1}[x]:x\in\FF\}$ has the finite intersection property, and let $\GG$ be the filter generated by $\XX$.
\begin{enumerate}
\item\label{Ppreimage} If $\FF$ is a $\mathsf{P}$-filter then $\GG$ is a $\mathsf{P}$-filter.
\item\label{nonmeagerpreimage} If $\FF$ is non-meager then $\GG$ is non-meager.
\end{enumerate}
\end{proposition}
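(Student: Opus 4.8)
The plan is to verify each clause directly from the relevant characterization, orienting everything by the remark that $\FF$ lives on the codomain of $f$ while $\GG$ lives on the domain. Throughout I would rely on three elementary facts about a finite-to-one $f$: that $f^{-1}[\,\cdot\,]$ commutes with finite intersections (so $\XX$ is closed under finite intersections, and hence every non-cofinite element of $\GG$ contains some $f^{-1}[x]$ with $x\in\FF$ modulo a finite set), that $f^{-1}$ sends infinite sets to infinite sets, and that $f$ sends finite sets to finite sets.

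For $(\ref{Ppreimage})$, I would start from a countable family $\{y_n:n\in\omega\}\subseteq\GG$ and, for each $n$, extract $x_n\in\FF$ with $f^{-1}[x_n]\subseteq^\ast y_n$ (taking $x_n=\omega$ when $y_n$ is merely cofinite). Applying the $\mathsf{P}$-filter property of $\FF$ to $\{x_n:n\in\omega\}$ yields a pseudointersection $x\in\FF$. I then claim that $z=f^{-1}[x]$ is the required pseudointersection of $\{y_n\}$ in $\GG$: it lies in $\XX\subseteq\GG$; it is infinite because $f$ is finite-to-one and $x$ is infinite; and since $x\setminus x_n$ is finite we get that $f^{-1}[x]\setminus f^{-1}[x_n]=f^{-1}[x\setminus x_n]$ is finite, whence $z\subseteq^\ast f^{-1}[x_n]\subseteq^\ast y_n$. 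This part is routine.

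For $(\ref{nonmeagerpreimage})$, I would use Talagrand's criterion (Theorem \ref{talagrandcriterion}) in both directions. Given a partition $\Pi=\{I_k:k\in\omega\}$ of the domain into finite sets, the images $F_k=f[I_k]$ are finite and, crucially, each point $m$ of the codomain lies in only finitely many $F_k$ (because $f^{-1}(m)$ is finite and so meets only finitely many blocks of $\Pi$). This finite-multiplicity fact lets me greedily select indices $k_0<k_1<\cdots$ with $\max F_{k_j}<\min F_{k_{j+1}}$, so that the selected images are pairwise separated. Writing $M_j=\max F_{k_j}$ and setting $J_0=[0,M_0]$ and $J_{j+1}=(M_j,M_{j+1}]$, I obtain a partition $\Pi'=\{J_j:j\in\omega\}$ of the codomain into finite sets with $F_{k_j}\subseteq J_j$ for every $j$. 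Applying Talagrand's criterion to the non-meager $\FF$ produces $x\in\FF$ with $x\cap J_j=\varnothing$ for infinitely many $j$; for each such $j$ we get $x\cap f[I_{k_j}]=\varnothing$, i.e.\ $f^{-1}[x]\cap I_{k_j}=\varnothing$. Since $y=f^{-1}[x]\in\GG$ and the $k_j$ are distinct, $y$ avoids infinitely many blocks of $\Pi$, and the other direction of Talagrand's criterion yields that $\GG$ is non-meager.

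The only genuinely delicate point is the construction of the codomain partition $\Pi'$ in $(\ref{nonmeagerpreimage})$: one must convert the cover $\{f[I_k]:k\in\omega\}$, which need be neither disjoint nor a partition, into an honest finite-block partition that nonetheless isolates infinitely many individual images $f[I_k]$ inside single blocks. The finite-multiplicity observation is exactly what makes the greedy separation possible, and checking that the resulting intervals are nonempty and exhaust $\omega$ is straightforward once one notes that the $M_j$ are strictly increasing and hence tend to infinity.
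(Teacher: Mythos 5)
Your proof is correct and takes essentially the same route as the paper: part (\ref{Ppreimage}) is exactly the routine argument the paper leaves to the reader, and for part (\ref{nonmeagerpreimage}) the paper likewise applies Theorem \ref{talagrandcriterion} in both directions, choosing indices $k_0<k_1<\cdots$ with pairwise disjoint images $f[I_{k_j}]$ and a partition $\Pi'$ of the codomain into finite sets with $f[I_{k_j}]\subseteq J_j$. Your greedy $\max/\min$ separation and explicit interval blocks merely spell out (in slightly stronger, monotone form) the construction the paper asserts in one line.
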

\begin{proof}
The straightforward proof of (\ref{Ppreimage}) is left to the reader. To show that (\ref{nonmeagerpreimage}) holds, assume that
$\FF$ is non-meager. Fix a partition $\Pi$ of $\omega$ into finite sets and let $\Pi=\{I_k:k\in\omega\}$ be an injective enumeration. In order to show that $\GG$ is non-meager, by Theorem \ref{talagrandcriterion}, it will be enough to show that there exists $z\in\GG$ such that $z\cap I_k=\varnothing$ for infinitely many $k$.

Since $f$ is finite-to-one, there exists a sequence $k_0<k_1<\cdots$ of natural numbers such that $f[I_{k_m}]\cap f[I_{k_n}]=\varnothing$ whenever $m\neq n$. Let $\Pi'=\{J_n:n\in\omega\}$ be a partition of $\omega$ into finite sets such that $f[I_{k_n}]\subseteq J_n$ for each $n$. By Theorem \ref{talagrandcriterion}, there exists $x\in\FF$ such that $x\cap J_n=\varnothing$ for infinitely many $n$. It is easy to check that $z=f^{-1}[x]$ is the desired element of $\GG$.
\end{proof}

\newpage

\begin{theorem}\label{Psubfilter}
Assume $\uuuu<\gggg$. Then every non-meager filter has a non-meager $\mathsf{P}$-subfilter.
\end{theorem}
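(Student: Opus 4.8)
The plan is to exploit the $\mathsf{P}$-point $\UU$ produced by Theorem \ref{coherence}, together with the preimage construction of Proposition \ref{preserve}, the point being that the coherence condition $f(\FF)=f(\UU)$ is precisely what guarantees that the resulting $\mathsf{P}$-filter will be contained in $\FF$. So fix a non-meager filter $\FF$, and use Theorem \ref{coherence} to fix a $\mathsf{P}$-point $\UU$ and a finite-to-one $f:\omega\to\omega$ with $f(\FF)=f(\UU)$.

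First I would observe that $f(\UU)$ is itself a non-meager $\mathsf{P}$-filter. It is an ultrafilter, since finite-to-one images of ultrafilters are ultrafilters (as noted before Theorem \ref{coherence}), and hence it is non-meager. To see that it is a $\mathsf{P}$-point, take countably many $A_n\in f(\UU)$; then $f^{-1}[A_n]\in\UU$, so $\UU$ being a $\mathsf{P}$-point yields a pseudointersection $C\in\UU$ with $C\subseteq^\ast f^{-1}[A_n]$ for all $n$. Then $f[C]\in f(\UU)$, and since $f[C\setminus f^{-1}[A_n]]$ is finite one checks that $f[C]\subseteq^\ast A_n$ for every $n$, so $f[C]$ is the required pseudointersection in $f(\UU)$.

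Next I would apply Proposition \ref{preserve} with $f(\UU)$ in the role of the filter. Setting $\XX=\{f^{-1}[x]:x\in f(\UU)\}$ and letting $\GG$ be the filter generated by $\XX$, part (\ref{Ppreimage}) shows that $\GG$ is a $\mathsf{P}$-filter (because $f(\UU)$ is) and part (\ref{nonmeagerpreimage}) shows that $\GG$ is non-meager (because $f(\UU)$ is). Thus $\GG$ is a non-meager $\mathsf{P}$-filter, and it remains only to check that $\GG\subseteq\FF$.

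This last inclusion is where the coherence is used. For every $x\in f(\UU)=f(\FF)$ we have $f^{-1}[x]\in\FF$ by the very definition of $f(\FF)$, so $\XX\subseteq\FF$; since $\FF$ is a filter and $\GG$ is the smallest filter containing $\XX$, this gives $\GG\subseteq\FF$. Hence $\GG$ is the desired non-meager $\mathsf{P}$-subfilter of $\FF$. I expect the main (if modest) obstacle to be the verification that $f(\UU)$ stays a $\mathsf{P}$-point under the finite-to-one image; the inclusion $\GG\subseteq\FF$ then follows immediately once one keeps track of the definition of $f(\FF)$, and the rest is a direct appeal to Proposition \ref{preserve}.
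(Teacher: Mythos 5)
Your proof is correct and takes essentially the same route as the paper's: both fix the $\mathsf{P}$-point $\UU$ from Theorem \ref{coherence}, set $\XX=\{f^{-1}[x]:x\in f(\UU)\}=\{f^{-1}[x]:x\in f(\FF)\}$ (which lies in $\FF$ by the definition of $f(\FF)$), and apply Proposition \ref{preserve} to the filter $f(\UU)$ to conclude that the subfilter of $\FF$ generated by $\XX$ is a non-meager $\mathsf{P}$-filter. Your verification that $f(\UU)$ is a $\mathsf{P}$-point simply fills in a step the paper labels ``easy to check.''
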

\begin{proof}
Let $\UU$ be the $\mathsf{P}$-point given by Theorem \ref{coherence}. Fix a non-meager filter $\FF$. Then there exists a finite-to-one $f:\omega\to\omega$ such that $f(\FF)=f(\UU)$. Let 
$$
\XX=\{f^{-1}[x]:x\in f(\FF)\}.
$$
Notice that $\XX\subseteq\FF$ by the definition of $f(\FF)$. Let $\GG$ be the subfilter of $\FF$ generated by $\XX$. It is easy to check that $f(\UU)$ is a $\mathsf{P}$-point, hence a non-meager $\mathsf{P}$-filter. Since $\XX=\{f^{-1}[x]:x\in f(\UU)\}$, it follows from Proposition \ref{preserve} that $\GG$ is a non-meager $\mathsf{P}$-filter.
\end{proof}

\begin{corollary}\label{cbdense}
Assume $\uuuu<\gggg$. Then every non-meager filter has a dense $\CB$ subspace.
\end{corollary}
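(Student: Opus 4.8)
The plan is to combine the two results just established. Given a non-meager filter $\FF$, I would first invoke Theorem \ref{Psubfilter} --- which is precisely where the hypothesis $\uuuu<\gggg$ enters --- to obtain a non-meager $\mathsf{P}$-subfilter $\GG\subseteq\FF$. By Marciszewski's characterization (Theorem \ref{Pequalscb}), any such $\GG$ is automatically $\CB$. So $\GG$ is a $\CB$ subspace of $\FF$, and the only remaining task is to verify that $\GG$ is dense in $\FF$.

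For the density, the key observation is that $\GG$, being a filter, contains $\COF$, so it suffices to show that $\COF$ is dense in $2^\omega$. This is routine: given any $s\in 2^{<\omega}$, the set $x=\{n<\dom(s):s(n)=1\}\cup\{n\in\omega:n\geq\dom(s)\}$ is cofinite and satisfies $x\upharpoonright\dom(s)=s$, hence $x\in[s]\cap\COF$. Thus $\cl(\COF)=2^\omega$, and since $\COF\subseteq\GG\subseteq\FF$, the closure of $\GG$ in $\FF$ is all of $\FF$. Putting these together, $\GG$ is the desired dense $\CB$ subspace of $\FF$.

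I do not expect any genuine obstacle here: the entire content of the corollary is carried by Theorem \ref{Psubfilter}, while the density of the subfilter is a triviality that holds for \emph{any} subfilter (indeed for any filter inside $2^\omega$). The one point worth flagging, to avoid confusion, is that what we actually need is density of $\GG$ in $\FF$ rather than in $2^\omega$; but both hold, and for the same reason, so the distinction causes no difficulty.
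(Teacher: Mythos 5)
Your proposal is correct and matches the paper's intended argument exactly: the paper's proof reads ``Simply apply Theorem \ref{main}'', which (given that the corollary immediately follows Theorem \ref{Psubfilter}) abbreviates precisely your chain --- obtain a non-meager $\mathsf{P}$-subfilter $\GG\subseteq\FF$ from Theorem \ref{Psubfilter}, conclude that $\GG$ is $\CB$ from the equivalence of being a non-meager $\mathsf{P}$-filter with being $\CB$ (you cite Theorem \ref{Pequalscb}, which is the relevant fragment of Theorem \ref{main}), and note that density is automatic since $\COF\subseteq\GG$ is dense in $2^\omega$ and hence in $\FF$. Your explicit verification of the density step, including the remark that density in $\FF$ rather than in $2^\omega$ is what is needed, is a correct and welcome expansion of what the paper leaves implicit.
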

\begin{proof}
Simply apply Theorem \ref{main}.
\end{proof}

It is natural to ask whether the above theorem and corollary actually hold in $\ZFC$. In Section 8, we will show that this is not the case for Theorem \ref{Psubfilter} (see Corollary \ref{kunenstrange}). However, we do not know the answers to the following questions.

\begin{question}\label{cbzfc}
Is it possible to prove in $\ZFC$ that every non-meager filter has a dense $\CB$ subspace?
\end{question}

\begin{question}\label{cbdensePsubfilter}
For a filter $\FF$, is having a non-meager $\mathsf{P}$-subfilter equivalent to having a dense $\CB$ subspace?
\end{question}

Assume that $D$ is a dense $\CB$ subspace of a filter $\FF$, and let $\GG$ denote then the subfilter of $\FF$ generated by $D$. Notice that $\GG$ is non-meager (because it has a dense Baire subspace). Hence, in order to answer Question \ref{cbdensePsubfilter}, one might try to show that $\GG$ is necessarily a $\mathsf{P}$-filter. The following proposition shows that this approach is not going to work.

\begin{proposition}
Assume $\MAc$. Then there exists a dense $\CB$ subspace $\Aa$ of $2^\omega$ with the finite intersection property such that the filter generated by $\Aa$ is not a $\mathsf{P}$-filter.
\end{proposition}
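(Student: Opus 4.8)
The plan is to build $\Aa$ as a dense subset of the $\sigma$-compact ``tail set''
$$
T=\{a\subseteq\omega:y_M\subseteq^\ast a\text{ for some }M\in\omega\},
$$
where I first fix a partition $\omega=\bigcup_{n\in\omega}c_n$ into infinite pieces with $\min c_n\geq n$, and set $y_n=\bigcup_{m\geq n}c_m$, so that $y_0\supseteq y_1\supseteq\cdots$ is a decreasing sequence of infinite sets with $y_M\subseteq[M,\infty)$. The point of passing to $T$ is that it settles everything except complete Baire-ness for free. Indeed, $T$ is closed under finite intersections (if $y_M\subseteq^\ast a$ and $y_N\subseteq^\ast b$ then $y_{\max(M,N)}\subseteq^\ast a\cap b$), so any $\Aa\subseteq T$ automatically has the finite intersection property; moreover every finite intersection of members of $\Aa$ almost contains a whole column $c_M$, hence meets it infinitely. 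Consequently, if $z$ is a pseudointersection of $\{y_n:n\in\omega\}$ (so that $z\cap c_m$ is finite for every $m$), then no finite intersection of members of $\Aa$ is $\subseteq^\ast z$, i.e.\ $z\notin\FF$, where $\FF$ is the filter generated by $\Aa$. Thus, provided I arrange $y_n\in\Aa$ for all $n$ (legitimate, since $y_n\in T$) and $\Aa$ dense, the sequence $\{y_n:n\in\omega\}\subseteq\FF$ witnesses that $\FF$ is not a $\mathsf{P}$-filter. The condition $\min c_n\geq n$ guarantees $y_M\subseteq[M,\infty)$, which is exactly what keeps the tail requirement compatible with density: given $s\in 2^{<\omega}$, one may pick $M\geq|s|$ and an element extending $s$ that contains $y_M$. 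This reduces the proposition to constructing a dense $\CB$ set $\Aa\subseteq T$.

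For the $\CB$ part I would use Hurewicz's theorem (Theorem \ref{hurewicz}): $\Aa$ is $\CB$ if and only if it contains no closed copy of $\QQQ$, i.e.\ for every countable crowded $D\subseteq\Aa$ some point of $\cl(D)\setminus D$ again lies in $\Aa$. Recalling that $\MAc$ is equivalent to $\mathrm{cov}(\MM)=\cccc$, I would run a recursion of length $\cccc$, enumerating both the density requirements (the basic clopen sets) and all countable crowded $D_\alpha\subseteq T$ (the only possible threats), and build $\Aa=\{a_\xi:\xi<\cccc\}\subseteq T$ together with a set $B\subseteq T$ of forbidden points, keeping $\Aa\cap B=\varnothing$ and both sets of size $<\cccc$ at each stage. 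At stage $\alpha$, if $D_\alpha$ is not yet entirely inside the part of $\Aa$ already built, I forbid one of its missing points by putting it into $B$, which permanently prevents $D_\alpha$ from being a closed copy of $\QQQ$ in $\Aa$; if instead $D_\alpha$ has already been absorbed into $\Aa$, I add a limit point $q\in(\cl(D_\alpha)\setminus D_\alpha)\cap T$ so that $D_\alpha$ fails to be closed in $\Aa$. The density steps are handled by adding, for the relevant $s$, a point of $\Aa$ lying in $[s]\cap T$ and containing some tail $y_M$, chosen to avoid $B$. Here $\mathrm{cov}(\MM)=\cccc$ does the work: each relevant pool of candidates contains a perfect set (e.g.\ $[s]\cap\{a:y_M\subseteq a\}\approx 2^\omega$ for large $M$, or the dense $\Gd$ set $(\cl(D_\alpha)\setminus D_\alpha)\cap T$ inside a perfect subset of $T$), and removing the $<\cccc$ points of $B$ and the co-$\Gd$ meager part cannot exhaust it.

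The hard part, and the genuine reason $\MAc$ is needed, is that $\cl(D)\setminus D$ is comeager in the perfect set $\cl(D)$ while $T$ itself is meager, so for a typical crowded $D\subseteq T$ the limit point $q$ one wants to add might not lie in $T$ at all (a countable crowded $D\approx\QQQ$ is meager in itself, so no na\"ive Baire-category argument inside $D$ locates a piece of $T$ accumulating inside $T$). The plan is therefore to make sure that the recursion only ever reaches the ``kill'' case for threats $D_\alpha$ that are $T$-killable, by defusing the dangerous ones through forbidding \emph{before} they can be completed. Concretely, I would maintain as an invariant that no crowded $D$ with $(\cl(D)\setminus D)\cap T=\varnothing$ is ever closed-and-complete inside the current approximation, and choose each newly added density or killing point generically so as not to complete any yet-unhandled crowded copy lacking a $T$-limit point. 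This is a demand to meet $<\cccc$ many dense conditions of a countable poset simultaneously, which is precisely what $\MAc$ provides and what mere cardinal arithmetic does not. Once the recursion terminates, every threat $D_\alpha$ has been either forbidden or rendered non-closed, so by Hurewicz $\Aa$ is $\CB$; by construction it is dense, has the finite intersection property, and generates a filter that is not a $\mathsf{P}$-filter, as required.
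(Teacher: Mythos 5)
Your reduction fails at the very first step, and the failure is absolute: in $\ZFC$ there is \emph{no} dense $\CB$ subset of your tail set $T$, so no amount of bookkeeping with $B$ and the ``dangerous'' crowded sets can make the recursion succeed. The set $T$ is meager in $2^\omega$: it is the countable union over $M\in\omega$ and $F\in[\omega]^{<\omega}$ of the sets $\{a\subseteq\omega: y_M\setminus F\subseteq a\}$, each of which is closed and nowhere dense (one can always extend a finite condition by placing a $0$ on a coordinate of the infinite set $y_M\setminus F$). Now recall the standard fact that a dense subset of $2^\omega$ contained in a meager set is meager \emph{in itself}: if $\Aa\subseteq\bigcup_n N_n$ with each $N_n$ closed nowhere dense, then each $\Aa\cap N_n$ is closed in $\Aa$ and has empty interior in $\Aa$, since $U\cap\Aa\subseteq N_n$ for a non-empty open $U$ would give $U\subseteq\cl(U\cap\Aa)\subseteq N_n$ by density. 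Hence any $\Aa\subseteq T$ that is dense in $2^\omega$ is meager in itself, therefore not a Baire space, therefore not $\CB$ (the whole space is closed in itself). This is exactly the fact the paper invokes when observing that a meager filter is not even Baire. Your own instinct that the meagerness of $T$ is ``the hard part'' was correct, but you misdiagnosed it as a difficulty about locating limit points inside $T$; it is an outright obstruction. More generally, any scheme that hard-codes non-$\mathsf{P}$-ness by demanding that every member of $\Aa$ almost contain a tail of a fixed decreasing sequence confines $\Aa$ to a meager $\sigma$-compact envelope and thereby destroys complete Baireness before the construction begins. (Even setting this aside, your invariant about never completing any of the $\cccc$ many dangerous crowded sets is not a demand to meet $<\cccc$ dense subsets of a countable poset, so the appeal to $\MAc$ there is unsubstantiated; but the point is moot.)

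The paper's proof avoids the trap precisely by not fixing any envelope in advance: it takes the independent family $\Aa$ of Theorem \ref{CBdenseindependent}, which $\MAc$ produces dense and $\CB$ via the countable poset of Lemma \ref{keyncbdensecb} (for each crowded $Q\subseteq\Aa$ one generically adds a point of $\cl(Q)\setminus Q$ \emph{preserving independence} --- a relational constraint that generic points satisfy, unlike membership in a fixed meager set). Non-$\mathsf{P}$-ness then comes for free from independence rather than from a pre-placed decreasing sequence: if $z\in\FF$ were a pseudointersection of some countable $\BB\in[\Aa]^\omega$, then $\bigcap F\subseteq^\ast z$ for some non-empty finite $F\subseteq\Aa$, and picking $x\in\BB\setminus F$ gives $\bigcap F\subseteq^\ast z\subseteq^\ast x$, so $\bigcap F\cap(\omega\setminus x)$ is finite, contradicting independence. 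If you want to salvage your plan, you must replace the tail set $T$ by a constraint of this flexible, relational kind; as stated, the proposal proves a statement that is false.
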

\begin{proof}
Let $\Aa$ be the independent family given by Theorem \ref{CBdenseindependent}, and let $\FF$ the filter generated by $\Aa$. Fix $\BB\in [\Aa]^\omega$. We claim that $\BB$ has no pseudointersection in $\FF$. Assume, in order to get a contradiction, that $z\in\FF$ is a pseudointersection of $\BB$. Since $\FF$ is generated by $\Aa$, there exists a non-empty $F\in [\Aa]^{<\omega}$ such that $\bigcap F\subseteq^\ast z$. Fix $x\in\BB\setminus F$. Notice that $z\subseteq^\ast x$ because $z$ is a pseudointersection of $\BB$. Therefore $\bigcap F\subseteq^\ast x$. It follows that $\bigcap F\cap(\omega\setminus x)$ is finite, which contradicts the fact that $\Aa$ is an independent family.
\end{proof}

\section{Two consistent characterizations}

In this section, we will combine several of the results discussed so far to show that, consistently, Theorem \ref{main} can be improved. Given a subspace $X$ of $2^\omega$ and a countable dense subset $D$ of $X$, define the \emph{relative type} of $D$ to be 
$$
\{h[D]:h\text{ is a homeomorphism of }2^\omega\text{ such that }h[X]=X\}.
$$
Notice that a space is $\RCDH$ if and only if it has exactly one relative type of countable dense subsets.

\begin{theorem}\label{anotherequivalent}
Assume $\uuuu<\gggg$. Then the following can be added to the list of equivalent conditions in Theorem \ref{main}.
\begin{enumerate}
\item[(9)] $\FF$ has less than $\cccc$ relative types of countable dense subsets.
\item[(10)] $\FF$ has less than $\cccc$ types of countable dense subsets.
\end{enumerate}
\end{theorem}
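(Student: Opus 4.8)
The plan is to augment the equivalences of Theorem \ref{main} by proving the chain $(\ref{cdhp})\rightarrow(9)\rightarrow(10)\rightarrow(\ref{nonmeagerPfilter})$; since $(\ref{nonmeagerPfilter})\rightarrow(\ref{cdhp})$ already holds by Theorem \ref{main}, this closes a cycle through all ten conditions. Note that the hypothesis $\uuuu<\gggg$ will be needed only in the final implication.

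The implication $(\ref{cdhp})\rightarrow(9)$ is immediate: if $\FF$ is $\RCDH$ then, as observed just before the statement, it has exactly one relative type of countable dense subsets, and $1<\cccc$. For $(9)\rightarrow(10)$ the key observation is that relative types refine types. Indeed, any homeomorphism $h$ of $2^\omega$ with $h[\FF]=\FF$ restricts to a homeomorphism of $\FF$; hence two countable dense subsets lying in the same relative type necessarily lie in the same type. Thus every type is a union of relative types, the number of types is at most the number of relative types, and fewer than $\cccc$ relative types forces fewer than $\cccc$ types.

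The substantive step is $(10)\rightarrow(\ref{nonmeagerPfilter})$, which I would prove by contraposition: assuming that $\FF$ is \emph{not} a non-meager $\mathsf{P}$-filter, I would produce $\cccc$ types of countable dense subsets, splitting into two cases. If $\FF$ is meager, then Corollary \ref{meagerfiltersmanytypes} gives $\cccc$ types directly. If $\FF$ is non-meager but not a $\mathsf{P}$-filter, then by Theorem \ref{main} (the equivalence of $(\ref{nonmeagerPfilter})$ with being $\CB$) the filter $\FF$ fails to be $\CB$; on the other hand, Corollary \ref{cbdense}---which is exactly where $\uuuu<\gggg$ enters, via the non-meager $\mathsf{P}$-subfilter produced in Theorem \ref{Psubfilter}---guarantees that $\FF$ has a dense $\CB$ subspace. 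So $\FF$ is a non-$\CB$ space possessing a dense $\CB$ subspace, and Theorem \ref{hrusakvanmillctypes} yields $\cccc$ types.

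I do not anticipate a genuine obstacle here, as all the heavy machinery lives in the earlier sections; the proof is essentially an exercise in assembling Theorem \ref{main}, Corollary \ref{meagerfiltersmanytypes}, Corollary \ref{cbdense} and Theorem \ref{hrusakvanmillctypes} in the right order. The one point deserving care is the refinement claim underlying $(9)\rightarrow(10)$, namely that an ambient homeomorphism of $2^\omega$ preserving $\FF$ does restrict to a self-homeomorphism of $\FF$, so that sameness of relative type implies sameness of type.
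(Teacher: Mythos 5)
Your proposal is correct and takes essentially the same route as the paper: the paper likewise notes that $(\ref{cdhp})\rightarrow(9)\rightarrow(10)$ is clear, and then handles the remaining implication by the same case split, using Corollary \ref{meagerfiltersmanytypes} when $\FF$ is meager and Corollary \ref{cbdense} together with Theorem \ref{hrusakvanmillctypes} when $\FF$ is non-meager. The only cosmetic difference is that the paper states the last implication as $(10)\rightarrow(\ref{completelybaire})$ rather than $(10)\rightarrow(\ref{nonmeagerPfilter})$, which is immaterial given the equivalences of Theorem \ref{main}.
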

\begin{proof}
It is clear that $(\ref{cdhp})\rightarrow (9)\rightarrow (10)$. We will finish the proof by showing that $(10)\rightarrow (\ref{completelybaire})$. Suppose that $\FF$ is not $\CB$. If $\FF$ is meager then $\FF$ has $\cccc$ types of countable dense subsets by Corollary \ref{meagerfiltersmanytypes}, so assume that $\FF$ is non-meager. Then $\FF$ has a dense $\CB$ subspace by Corollary \ref{cbdense}.
Therefore $\FF$ has $\cccc$ types of countable dense subsets by Theorem \ref{hrusakvanmillctypes}. 
\end{proof}
\begin{corollary}\label{ctypesunderulessg}
Assume $\uuuu<\gggg$. Then there exists an ultrafilter with $\cccc$ types of countable dense subsets.
\end{corollary}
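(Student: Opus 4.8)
The plan is to read this off directly from Theorem \ref{anotherequivalent} together with the well-known existence of ultrafilters that are not $\mathsf{P}$-points. First I would fix, working in $\ZFC$, an ultrafilter $\UU$ that is not a $\mathsf{P}$-point; such an ultrafilter exists (for instance, apply Lemma \ref{nonP} with $\Aa=\varnothing$, as remarked after Theorem \ref{main}). Recall that every ultrafilter is non-meager, and that for an ultrafilter being a $\mathsf{P}$-filter is the same as being a $\mathsf{P}$-point. Hence $\UU$ is a non-meager filter that fails to be a non-meager $\mathsf{P}$-filter.

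Next I would invoke Theorem \ref{anotherequivalent}: under the assumption $\uuuu<\gggg$, condition (10) (having less than $\cccc$ types of countable dense subsets) is equivalent to condition (\ref{nonmeagerPfilter}) of Theorem \ref{main} (being a non-meager $\mathsf{P}$-filter). Since $\UU$ is not a non-meager $\mathsf{P}$-filter, condition (10) must fail for $\UU$, so $\UU$ has at least $\cccc$ types of countable dense subsets. For the matching upper bound, note that since $\UU$ is a subspace of $2^\omega$, the number of its countable dense subsets is at most $\cccc^{\aleph_0}=\cccc$, so it has at most $\cccc$ types; as observed in the introduction, $\cccc$ is the maximum possible number of types of countable dense subsets of any space. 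Combining the two bounds, $\UU$ has exactly $\cccc$ types of countable dense subsets, which is the claim.

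The only step carrying any genuine content is the existence of a non-$\mathsf{P}$-point ultrafilter in $\ZFC$, and even this is a standard fact already packaged in Lemma \ref{nonP}; everything else is a direct application of the equivalence established in Theorem \ref{anotherequivalent} plus the trivial cardinality bound. So I do not expect a real obstacle here: the corollary is essentially immediate once one observes that an ultrafilter is automatically non-meager while a non-$\mathsf{P}$-point ultrafilter is not a $\mathsf{P}$-filter, which is exactly the gap between the hypothesis of Theorem \ref{anotherequivalent} and its conclusion.
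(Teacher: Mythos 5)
Your proposal is correct and is exactly the argument the paper intends: the corollary is stated without proof as an immediate consequence of Theorem \ref{anotherequivalent}, obtained by taking a $\ZFC$ ultrafilter that is not a $\mathsf{P}$-point (Lemma \ref{nonP} with $\Aa=\varnothing$, as the paper itself remarks after Theorem \ref{main}), noting it is non-meager but not a $\mathsf{P}$-filter, and applying the failure of condition (10) together with the trivial upper bound of $\cccc$ types.
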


It is natural to ask whether the above theorem and corollary hold in $\ZFC$. Observe that, by Theorem \ref{main}, the answer to the following question is ``no'' if and only if Theorem \ref{anotherequivalent} holds in $\ZFC$. See also Corollary \ref{ctypesundermac}.

\begin{question}
Is it consistent that there exists a filter with $\kappa$ types of countable dense subsets, where $1<\kappa<\cccc$?
\end{question}

\begin{question}\label{ctypesquestion}
Is it possible to construct in $\ZFC$ an ultrafilter (or a non-meager filter) with $\cccc$ types of countable dense subsets?
\end{question}

\section{Another ultrafilter with $\cccc$ types of countable dense subsets}

In this section, we construct an ultrafilter with $\cccc$ types of countable dense subsets under $\MAc$ (see Corollary \ref{ctypesundermac}). Notice that this result does not overlap with Corollary \ref{ctypesunderulessg} because the assumptions $\uuuu<\gggg$ and $\MAc$ are incompatible (since $\mathsf{cov}(\BB)\leq\rrrr\leq\uuuu$ by \cite[Propositions 5.19 and 9.7]{blass}, and $\MAc$ is equivalent to $\mathsf{cov}(\BB)=\cccc$ by \cite[Theorem
7.13]{blass}). We will need the following preliminary lemma, inspired by \cite{kunenweak}.

\begin{lemma}\label{nonP}
Let $\Aa$ be an independent family. Then there exists an ultrafilter $\UU$ extending $\Aa$ that is not a $\mathsf{P}$-point.
\end{lemma}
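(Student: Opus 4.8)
The plan is to extend the filter generated by $\Aa$ to an ultrafilter while deliberately arranging a countable subfamily with no pseudointersection. I would first recall the standard reformulation of the P-point property in terms of partitions: an ultrafilter $\UU$ fails to be a P-point exactly when there is a partition $\{A_n:n\in\omega\}$ of $\omega$ into infinite sets such that $A_n\notin\UU$ for every $n$ and no $z\in\UU$ is finite on every block. Setting $\JJ=\{z\subseteq\omega:|z\cap A_n|<\omega\text{ for all }n\}$ — a proper ideal containing $\FIN$ — this says precisely that $\{\omega\setminus A_n:n\in\omega\}\subseteq\UU$ and $\UU\cap\JJ=\varnothing$, and for an ultrafilter the latter is equivalent to $\{\omega\setminus z:z\in\JJ\}\subseteq\UU$. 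So it is enough to find a partition for which
$$
\Aa\cup\{\omega\setminus A_n:n\in\omega\}\cup\{\omega\setminus z:z\in\JJ\}
$$
has the finite intersection property: any ultrafilter $\UU$ extending the filter it generates contains $\Aa$, while the decreasing sequence $\omega\setminus(A_0\cup\cdots\cup A_k)\in\UU$ witnesses that $\UU$ is not a P-point, since a pseudointersection of it lying in $\UU$ would belong to $\JJ$.

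Second, I would reduce this FIP requirement to a transparent condition on the partition alone. Since $\JJ$ is an ideal and, for an infinite set $P$, one has $P\setminus z$ infinite for every $z\in\JJ$ if and only if $P\notin\JJ$, unwinding finite intersections shows that the displayed family has the FIP exactly when the following holds: for every nonempty finite $F\in[\Aa]^{<\omega}$, the set $\bigcap F$ has infinite intersection with infinitely many of the blocks $A_n$. Thus the whole lemma is reduced to constructing a partition of $\omega$ into infinitely many infinite pieces satisfying this condition (call it $(\ast)$). Notice that for $\Aa=\varnothing$ — the case used to deduce Corollary \ref{notcdhzfc} — every $\bigcap F$ is cofinite, so any partition into infinitely many infinite sets works and there is nothing more to do.

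Third, for the construction I would make the blocks \emph{transverse} to $\Aa$. The clean way is to produce a countably infinite family $\{E_n:n\in\omega\}$ such that $\Aa\cup\{E_n:n\in\omega\}$ is again independent, and to let the $A_n$ enumerate the nonempty finite Boolean combinations of the $E_n$. Then for every nonempty finite $F\subseteq\Aa$ and every block $A$, the set $\bigcap F\cap A$ is a Boolean combination over $\Aa\cup\{E_n:n\in\omega\}$ in which every element of $F$ occurs positively, hence it is infinite by independence; so in fact every $\bigcap F$ meets every block infinitely, which is much stronger than $(\ast)$. This route succeeds whenever $\Aa$ leaves room for additional independent sets, which is the situation in the intended application.

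The step I expect to be the main obstacle is producing such a partition in complete generality, i.e.\ when no fresh independent sets over $\Aa$ are available (as could happen for a maximal independent family). The difficulty is genuine: any block cut out using a coordinate $x\in\Aa$ automatically misses every $\bigcap F$ with $x\in F$, so the naive linear constructions confine some $\bigcap F$ to finitely many blocks and violate $(\ast)$; moreover, while for a \emph{single} fixed $\bigcap F$ a generic coloring of $\omega$ meets the requirement, one cannot in general satisfy $\cccc$-many such requirements at once by a category or measure argument. I would therefore attempt a direct transfinite recursion that enumerates the finite $F\subseteq\Aa$ and, at each stage, guarantees $\bigcap F$ is spread over infinitely many blocks, exploiting that each point of $\omega$ lies in $\bigcap F$ for cofinally many $F$ (so that points committed at earlier stages can serve later requirements without exhausting $\omega$), together with the fact that the filter generated by an independent family is nowhere an ultrafilter. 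Making this bookkeeping consistent is the heart of the matter.
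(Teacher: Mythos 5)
Your first two reduction steps are sound: the partition reformulation of failing to be a $\mathsf{P}$-point, the equivalence of the finite intersection property of $\Aa\cup\{\omega\setminus A_n:n\in\omega\}\cup\{\omega\setminus z:z\in\JJ\}$ with your condition $(\ast)$, and the transversal construction in the case where countably many fresh independent sets over $\Aa$ exist are all correct (modulo the minor slip that the ``nonempty finite Boolean combinations of the $E_n$'' do not literally form a partition; one should take, say, $A_n=(E_0\cap\cdots\cap E_{n-1})\setminus E_n$ and absorb $\bigcap_{n\in\omega}E_n$ into $A_0$). But the proposal does not prove the lemma: when $\Aa$ is a maximal independent family no fresh $E_n$ exist, and your last paragraph only gestures at a transfinite recursion without carrying it out. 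Since the statement concerns an arbitrary independent family, this is a genuine gap, and you have correctly located it yourself.

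The deeper problem is that the obstacle you call ``genuine'' is illusory: condition $(\ast)$ does not require blocks transverse to $\Aa$, because a fixed finite $F\subseteq\Aa$ excludes only \emph{finitely many} blocks cut out by coordinates of $\Aa$. Concretely, fix $\BB=\{y_n:n\in\omega\}\in[\Aa]^\omega$ (if $\Aa$ is finite it is not maximal --- split the Boolean atoms --- so your transversal construction already applies; the paper likewise assumes without loss of generality that $\Aa$ is infinite) and let $A_n=(y_0\cap\cdots\cap y_{n-1})\setminus y_n$, merging the leftover $\bigcap_{n\in\omega}y_n$ into $A_0$. For a nonempty $F\in[\Aa]^{<\omega}$ and any $n$ with $y_n\notin F$, the set $\bigcap F\cap A_n$ is a Boolean combination in which every member of $F\cup\{y_0,\ldots,y_{n-1}\}$ occurs positively and $y_n$ negatively, hence is infinite by independence; since $y_n\in F$ for at most $|F|$ values of $n$, condition $(\ast)$ holds. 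Your requirement in step three (every $\bigcap F$ meets \emph{every} block infinitely) is strictly stronger than $(\ast)$ and is what manufactured the spurious need for transversality, leading you to misjudge the naive ``linear'' construction as failing. The paper's actual proof is this same idea in leaner packaging, avoiding partitions altogether: it fixes $\BB\in[\Aa]^\omega$ and checks that $\XX=\Aa\cup\{\omega\setminus x:x\subseteq^\ast y\text{ for every }y\in\BB\}$ has the finite intersection property --- given a nonempty $F\in[\Aa]^{<\omega}$ and finitely many such $x$'s with union $x$, pick $y\in\BB\setminus F$; then $\bigcap F\setminus y$ is infinite by independence while $x\setminus y$ is finite --- so any ultrafilter $\UU\supseteq\XX$ contains the countable set $\BB$ but no pseudointersection of it, and hence is not a $\mathsf{P}$-point.
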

\begin{proof}
Without loss of generality, assume that $\Aa$ is infinite. Fix $\BB\in [\Aa]^\omega$. It is easy to check that 
$$
\XX=\Aa\cup\{\omega\setminus x:x\subseteq^\ast y\text{ for every }y\in\BB\}
$$
has the finite intersection property. Let $\UU$ be any ultrafilter extending $\XX$. It is clear that $\BB$ has no pseudointersection in $\UU$.
\end{proof}

\begin{theorem}\label{CBdenseindependent}
Assume $\MAc$. Then there exists an independent family $\Aa$ that is dense in $2^\omega$ and $\CB$.
\end{theorem}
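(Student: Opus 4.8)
The plan is to construct $\Aa$ by a transfinite recursion of length $\cccc$, building an increasing chain $\langle\Aa_\alpha:\alpha<\cccc\rangle$ of independent families, each of size less than $\cccc$, and setting $\Aa=\bigcup_{\alpha<\cccc}\Aa_\alpha$. Since independence is a finitary condition, any increasing union of independent families is independent, so $\Aa$ will automatically be independent. To guarantee density I would let $\Aa_0$ be a fixed countable dense independent family, which exists in $\ZFC$: enumerating $2^{<\omega}$ as $\langle s_n:n\in\omega\rangle$ and choosing $d_n\in[s_n]$ so that $\{d_0,\dots,d_n\}$ stays independent is possible at each step because the finitely many ``bad'' points form a meager subset of $[s_n]$, and a finite union of meager sets cannot cover $[s_n]$ by the Baire category theorem. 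Since $\Aa_0\subseteq\Aa$ remains dense, the only real work is to make $\Aa$ completely Baire. By Theorem \ref{hurewicz} this means arranging that $\Aa$ contains no closed copy of $\QQQ$, and it suffices to ensure that \emph{no countable crowded $Q\subseteq\Aa$ is closed in $\Aa$}, i.e. that each such $Q$ has a limit point lying in $\Aa\setminus Q$. I would therefore fix an enumeration $\langle Q_\alpha:\alpha<\cccc\rangle$ of all countable crowded subsets of $2^\omega$ in which each such set appears cofinally often (possible since there are exactly $\cccc$ of them), and at stage $\alpha$ act only when $Q_\alpha\subseteq\Aa_\alpha$.

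The heart of the matter is the recursion step. Assume $Q=Q_\alpha$ is crowded and $Q\subseteq\Aa_\alpha$; I want $p\in\cl(Q)\setminus Q$ such that $\Aa_\alpha\cup\{p\}$ is still independent, and then I set $\Aa_{\alpha+1}=\Aa_\alpha\cup\{p\}$. Note that $\cl(Q)$ is a nonempty compact crowded zero-dimensional space, hence homeomorphic to $2^\omega$. For each finite $F\subseteq\Aa_\alpha$ and each $\sigma:F\to 2$ put $A_{F,\sigma}=\bigcap_{x\in F}x^{\sigma(x)}$, which is infinite by independence, and consider the two sets $\{p:p\cap A_{F,\sigma}\text{ is finite}\}$ and $\{p:A_{F,\sigma}\setminus p\text{ is finite}\}$. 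A point $p\in\cl(Q)$ avoiding all of these (and avoiding the countable set $Q$) is exactly a point for which $p\notin Q$ and $\Aa_\alpha\cup\{p\}$ is independent. The main obstacle is that these sets, though meager in $2^\omega$, must be shown meager \emph{within the particular perfect set} $\cl(Q)$ before $\MAc$ can be applied, and in general a meager subset of $2^\omega$ can swallow an entire perfect set. This is where I would use that $Q$ is itself an independent family (being a subset of $\Aa_\alpha$) and is crowded: given any basic clopen $[s]$ meeting $\cl(Q)$, the set $[s]\cap Q$ is nonempty and, by crowdedness, infinite, so it contains some $q\notin F$; by independence of $\Aa_\alpha$ both $q\cap A_{F,\sigma}$ and $A_{F,\sigma}\setminus q$ are infinite, and $q\in[s]\cap\cl(Q)$ witnesses that neither of the two sets above contains the relatively open piece $[s]\cap\cl(Q)$. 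Hence each of these sets is an increasing union of closed sets with empty interior, so it is meager in $\cl(Q)$. Since there are fewer than $\cccc$ of them, $\MAc$ (in the form that a perfect Polish space is not the union of fewer than $\cccc$ meager sets) yields the desired $p$; note that avoiding the instances with $F=\{x\}$, $\sigma(x)=0$ automatically forces $p\notin\Aa_\alpha$.

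Finally I would verify that $\Aa=\bigcup_{\alpha<\cccc}\Aa_\alpha$ works. It is independent and dense as noted. Suppose toward a contradiction that it contained a closed copy $Q$ of $\QQQ$; then $Q$ is a countable crowded subset of $\Aa$. Because $\operatorname{cf}(\cccc)>\omega$ by König's theorem and $Q$ is countable, there is $\beta<\cccc$ with $Q\subseteq\Aa_\beta$, and by the cofinal appearance there is $\alpha\geq\beta$ with $Q_\alpha=Q\subseteq\Aa_\beta\subseteq\Aa_\alpha$. At that stage the recursion added a point $p\in\cl(Q)\setminus Q$ to $\Aa_{\alpha+1}\subseteq\Aa$, so $p\in\Aa$ is a limit point of $Q$ outside $Q$, contradicting that $Q$ is closed in $\Aa$. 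Thus $\Aa$ contains no closed copy of $\QQQ$ and is $\CB$ by Theorem \ref{hurewicz}. I expect the meagerness-within-$\cl(Q)$ step to be the only genuinely delicate point; everything else reduces to bookkeeping and the finitary nature of independence.
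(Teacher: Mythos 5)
Your proposal is correct and is essentially the paper's own proof: the same transfinite recursion of length $\cccc$ with a cofinally repeating enumeration of the copies of $\QQQ$ (equivalently, countable crowded subsets) of $2^\omega$, the same key lemma (given an independent $\Aa$ with $|\Aa|<\cccc$ and a crowded $Q\subseteq\Aa$, find $z\in\cl(Q)\setminus Q$ with $\Aa\cup\{z\}$ independent, where crowdedness of $Q$ \emph{together with} $Q\subseteq\Aa$ drives the genericity argument), and the same closing verification via Theorem \ref{hurewicz}. The only difference is the formulation of $\MAc$: the paper forces with the countable poset of finite initial segments of members of $Q$ against the dense sets $D_{\sigma,\nu,\varepsilon,\ell}$ and $D_q$, while you use the equivalent covering form $\mathsf{cov}(\BB)=\cccc$ inside the perfect set $\cl(Q)$ --- but $\cl(Q)$ is exactly the branch space of that poset and your closed nowhere dense sets are the complements of the corresponding open dense sets, so the two arguments coincide.
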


\begin{proof}
Enumerate as $\{Q_\eta:\eta\in\cccc\}$ all copies of $\QQQ$ in $2^\omega$, making sure to list each one cofinally often. We will construct $\Aa_\xi$ for every $\xi\in\cccc$ by transfinite recursion. In the end, set
$\Aa=\bigcup_{\xi\in\cccc}\Aa_\xi$. By induction, we will make sure that the following requirements are satisfied.
\begin{enumerate}
\item $\Aa_\eta\subseteq\Aa_\xi$ whenever $\eta\leq\xi<\cccc$.
\item $\Aa_\xi$ is an independent family for every $\xi\in\cccc$.
\item $|\Aa_\xi|<\cccc$ for every $\xi\in\cccc$.
\item If $Q_\eta\subseteq\Aa_\eta$ then there exists $z\in\Aa_\xi$ such that $z\in\cl(Q_\eta)\setminus Q_\eta$.
\end{enumerate}

Start by letting $\Aa_0$ be a countable independent family that is dense in $2^\omega$. Take unions at limit stages. At a successor stage $\xi=\eta+1$, assume that $\Aa_\eta$ is given. First assume that $Q_\eta\nsubseteq \Aa_\eta$. In this case, simply set $\Aa_\xi=\Aa_\eta$. Now assume that $Q_\eta\subseteq\Aa_\eta$. Apply Lemma $\ref{keyncbdensecb}$ with $\Aa=\Aa_\eta$ and $Q=Q_\eta$ to get $z\in\cl(Q_\eta)\setminus Q_\eta$ such that $\Aa_\eta\cup\{z\}$ is an independent family. Finally, set $\Aa_\xi=\Aa_\eta\cup\{z\}$.
\end{proof}

\begin{lemma}\label{keyncbdensecb} Assume $\MAc$. Let $\Aa$ be an independent family such that $|\Aa|<\cccc$. Let $Q\subseteq\Aa$ be crowded. Then there exists $z\in\cl(Q)\setminus Q$ such that
$\Aa\cup\{z\}$ is an independent family.
\end{lemma}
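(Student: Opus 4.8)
The goal is to find a single point $z$ in the boundary $\cl(Q)\setminus Q$ of the crowded set $Q$ (which lives inside the independent family $\Aa$) so that adjoining $z$ keeps independence. The plan is to set up a $\sigma$-centered forcing (equivalently, a directed partial order to which we apply $\MAc$) whose generic object produces such a $z$. Recall that independence of $\Aa\cup\{z\}$ means: for every $\nu:(\Aa\cup\{z\})\to 2$ with finite support containing the coordinate of $z$, the corresponding Boolean combination $\bigcap\{a^{\nu(a)}:a\in F\}\cap z^{\nu(z)}$ is infinite, where $F$ ranges over finite subsets of $\Aa$. So I must build $z$ meeting, for each such finite Boolean combination $B=\bigcap_{a\in F}a^{\nu(a)}$ (which is infinite since $\Aa$ is independent), the requirement that both $z\cap B$ and $(\omega\setminus z)\cap B$ are infinite. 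Simultaneously $z$ must be forced into the boundary $\cl(Q)\setminus Q$.

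First I would handle the topological side. Since $Q\approx\QQQ$ is crowded and countable, $\cl(Q)\setminus Q$ is large; to pin $z$ into this set I would demand that $z$ is a limit in $2^\omega$ of points of $Q$ but is itself \emph{not} in $Q$. The natural forcing conditions are pairs $p=(s_p,F_p)$ where $s_p\in 2^{<\omega}$ is a finite approximation to the characteristic function of $z$ and $F_p\in[\Aa]^{<\omega}$ is a finite set of ``promises'' (elements of $\Aa$ whose behavior relative to $z$ we commit to controlling). The ordering extends $s_p$ and enlarges $F_p$. This order is $\sigma$-centered, partitioned by the stem $s_p$, which is exactly the hypothesis needed to apply $\MAc$ (equivalently $\MAsigma$). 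The point of including the promise set $F_p$ is that extensions of $s_p$ are required to respect independence over $F_p$: when we lengthen the stem we keep adding elements to \emph{both} $z$ and its complement inside each Boolean combination determined by $F_p$.

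Next I would write down the dense sets, fewer than $\cccc$ of them, as required by $\MAc$ (here $|\Aa|<\cccc$ is exactly what keeps the number of requirements small). For each finite Boolean combination $B$ from $\Aa$ and each $n$, the set $E^0_{B,n}$ of conditions forcing $|z\cap B|\geq n$ and $E^1_{B,n}$ forcing $|(\omega\setminus z)\cap B|\geq n$ are dense — density follows because each such $B$ is infinite by independence of $\Aa$, so there is always room to extend the stem to put a new element of $B$ into $z$ or into its complement. For the topology, for each $k$ I would take the dense set $T_k$ of conditions whose stem has length $\geq k$ and agrees on its stem with some $q\in Q$ having $q\neq z$ below that length, guaranteeing $z\in\cl(Q)$; and I must also ensure $z\notin Q$. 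Since $Q$ is countable, avoiding $Q$ can be arranged by a diagonalization folded into the stem-extension step (each $q\in Q$ can be explicitly differed from at some coordinate), or by noting $z\notin\Aa\supseteq Q$ automatically because $z$ is independent over all of $\Aa$ while members of $Q$ are not independent over themselves.

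The main obstacle, and the step I would spend the most care on, is \textbf{simultaneously} forcing $z\in\cl(Q)$ and preserving independence. These two demands pull in opposite directions: landing $z$ near points of $Q$ constrains the stem, while independence requires freedom to insert elements of $z$ and its complement into every Boolean combination $B$. The resolution is that $Q\subseteq\Aa$ is itself crowded in the independent family, so near any finite stem there are many nearby $q\in Q$, and one shows that the constraint ``agree with some $q\in Q$ on the stem'' is compatible with ``extend into $B$'': because $B$ is infinite and the $q$'s vary, one can pick a nearby $q$ whose value at a chosen fresh index of $B$ is whatever is needed. Verifying this compatibility — that the topological dense sets and the independence dense sets can be met by a common extension — is the crux; once it is checked, $\MAc$ yields a filter meeting all the dense sets, and the resulting $z=\{n:s(n)=1 \text{ for the generic stem}\}$ has all the required properties, completing the proof.
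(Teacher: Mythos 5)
Your overall strategy --- generically building $z$ via Martin's Axiom, with dense sets securing independence, membership in $\cl(Q)$, and avoidance of $Q$ --- is the same as the paper's, but your setup has a genuine gap: in your poset the topological dense sets $T_k$ are simply not dense. Your stems range over all of $2^{<\omega}$, and for a general crowded $Q\subseteq\Aa$ (which need not be dense in $2^\omega$; in the paper's application $Q$ ranges over arbitrary copies of $\QQQ$) there are stems $s$ with $[s]\cap Q=\varnothing$. No extension of such a condition agrees with any $q\in Q$, and nothing prevents the generic from passing through such a stem at the very first step, so $z\in\cl(Q)$ is not guaranteed. You correctly identify this tension as the crux, but you defer its resolution, and it cannot be resolved as a ``compatibility check'' between the two families of dense sets: density is a property of each set separately in a fixed poset, and in your poset the independence sets are dense while the closure sets are not. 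The paper's resolution is to hard-wire the closure requirement into the forcing itself: the conditions are exactly the finite initial segments of elements of $Q$, ordered by end-extension. Then $z\in\cl(Q)$ is automatic, and the entire burden shifts to the density of the independence sets $D_{\sigma,\nu,\varepsilon,\ell}$ --- which is precisely where the hypotheses that $Q$ is crowded and $Q\subseteq\Aa$ enter. Given a stem $q\upharpoonright n$, crowdedness yields infinitely many $q'\in Q$ extending it, so one may choose $q'\notin\sigma$; since $q'\in\Aa$, independence applied to $\sigma\cup\{q'\}$ gives infinitely many $i$ with $x(i)=\nu(x)$ for all $x\in\sigma$ and $q'(i)=\varepsilon$, and one extends the stem along $q'$ past such an $i$. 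The fresh index is found inside the Boolean combination \emph{enlarged by $q'$ itself}; this is the precise content of your vague remark that ``the $q$'s vary,'' and it is the one step your write-up never carries out.

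A second, independent error is the appeal to $\MAc$. Your poset carries side conditions $F_p\in[\Aa]^{<\omega}$, so it has size $|\Aa|$, which may be uncountable; $\sigma$-centeredness then licenses only $\MAsigma$, and your parenthetical claim that $\MAc$ is ``equivalently $\MAsigma$'' is false: $\MAc$ is equivalent to $\mathsf{cov}(\BB)=\cccc$ while $\MAsigma$ is equivalent to $\pppp=\cccc$, and the former is strictly weaker (adding $\omega_2$ Cohen reals over a model of $\CH$ gives $\mathsf{cov}(\BB)=\cccc$ with $\pppp=\omega_1$). Since the lemma is stated and applied under $\MAc$, this loss of strength is not cosmetic. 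Fortunately the promise sets $F_p$ do no work in any of your density arguments --- independence is secured by the dense sets alone, and no Mathias-style commitments are needed --- so they should be deleted outright; with the paper's poset the forcing is literally countable and $\MAc$ applies as stated. Your treatment of $z\notin Q$ is fine either way: one can use dense sets $D_q$ as the paper does, or your observation that the independence sets with $\sigma=\{x\}$ already force $z\neq x$ for every $x\in\Aa\supseteq Q$.
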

\begin{proof}
Consider the countable poset 
$$
\PPP=\{s\in 2^{<\omega}:\text{there exist }q\in Q\text{ and }n\in\omega\text{ such that }s=q\upharpoonright n\},
$$
with the natural order given by reverse inclusion.

For every $\sigma\in [\Aa]^{<\omega}$, $\nu:\sigma\longrightarrow 2$,
$\varepsilon\in 2$ and $\ell\in\omega$, define
$$
D_{\sigma,\nu,\varepsilon,\ell}=\{s\in\PPP:\text{there exists
}i\in\dom(s)\setminus\ell 
$$
$$
\text{ such that }s(i)=\varepsilon\text{ and }x(i)=\nu(x)\text{ for every
}x\in\sigma\}.
$$
Using the fact that $Q$ is crowded and $Q\subseteq\Aa$, one can show that each $D_{\sigma,\nu,\varepsilon,\ell}$ is dense in $\PPP$. For every $q\in Q$, define
$$
D_q=\{s\in\PPP:\text{there exists }i\in\dom(s)\text{ such that }s(i)\neq
q(i)\}.
$$
It is easy to see that each $D_q$ is dense in $\PPP$.

Since $|\Aa|<\cccc$ and $Q\subseteq\Aa$, the collection of dense sets
$$
\DD=\{D_{\sigma,\nu,\varepsilon,\ell}:\sigma\in
[\Aa]^{<\omega},\nu:\sigma\longrightarrow 2,\varepsilon\in
2,\ell\in\omega\}\cup\{D_q:q\in Q\}
$$
has also size less than $\cccc$. Therefore, by $\MAc$, there exists a
$\DD$-generic filter $G\subseteq\PPP$. Let $z=\bigcup G\in 2^\omega$. The dense sets of the form $D_{\sigma,\nu,\varepsilon,\ell}$ ensure that $\Aa\cup\{z\}$ is an independent family. The definition of $\PPP$ guarantees that $z\in\cl(Q)$. Finally, the dense sets of the form $D_q$ guarantee that $z\notin Q$.
\end{proof}

\begin{corollary}\label{CBdensenotCB}
Assume $\MAc$. Then there exists an ultrafilter that is not $\CB$ but has a dense $\CB$ subspace.
\end{corollary}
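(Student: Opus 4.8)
The plan is to assemble the two immediately preceding results. Theorem \ref{CBdenseindependent} produces, under $\MAc$, an independent family that is at once dense in $2^\omega$ and $\CB$, and Lemma \ref{nonP} lets us extend any independent family to an ultrafilter that is not a $\mathsf{P}$-point. The link between the combinatorial failure of the $\mathsf{P}$-point property and the topological failure of being $\CB$ is then provided by Theorem \ref{main}.

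First I would apply Theorem \ref{CBdenseindependent} to fix an independent family $\Aa$ that is dense in $2^\omega$ and $\CB$. Observe that $\Aa$ is automatically infinite, since the crowded space $2^\omega$ admits no finite dense subset. Since an independent family has the finite intersection property, I can then apply Lemma \ref{nonP} to extend $\Aa$ to an ultrafilter $\UU$ that is not a $\mathsf{P}$-point.

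It remains to verify the two desired properties of $\UU$. For the first, recall that every ultrafilter is non-meager; hence by Theorem \ref{main} the ultrafilter $\UU$ is $\CB$ if and only if it is a non-meager $\mathsf{P}$-filter, equivalently if and only if it is a $\mathsf{P}$-point. As $\UU$ is not a $\mathsf{P}$-point, it is not $\CB$. For the second, note that $\Aa\subseteq\UU$ since $\UU$ extends $\Aa$, and that the closure of $\Aa$ inside $\UU$ is $\cl(\Aa)\cap\UU=2^\omega\cap\UU=\UU$, where $\cl$ denotes closure in $2^\omega$ and we used that $\Aa$ is dense in $2^\omega$; thus $\Aa$ is dense in $\UU$. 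Because the subspace topology is transitive, $\Aa$ carries the same topology whether regarded as a subspace of $\UU$ or of $2^\omega$, so it remains $\CB$. Therefore $\Aa$ is a dense $\CB$ subspace of $\UU$, completing the argument.

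I do not expect any genuine obstacle, as all of the real work has been isolated into Theorem \ref{CBdenseindependent} and Lemma \ref{nonP}. The only step requiring a moment's attention is the use of Theorem \ref{main} to convert ``not a $\mathsf{P}$-point'' into ``not $\CB$'', together with the routine check that density and the $\CB$ property of $\Aa$ are preserved when we view it inside the smaller ambient space $\UU$ rather than $2^\omega$.
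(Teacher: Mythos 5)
Your proposal is correct and follows exactly the paper's own argument: combine Theorem \ref{CBdenseindependent} with Lemma \ref{nonP}, then invoke Theorem \ref{main} (via the fact that ultrafilters are non-meager) to conclude $\UU$ is not $\CB$. The extra verifications you include --- that $\Aa$ is infinite, that density of $\Aa$ in $2^\omega$ gives density in $\UU$, and that the $\CB$ property is intrinsic to the subspace topology --- are precisely the routine details the paper leaves implicit with ``it is clear.''
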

\begin{proof}
Let $\Aa$ be the independent family given by Theorem \ref{CBdenseindependent}. By Lemma \ref{nonP}, there exists an ultrafilter $\UU\supseteq\Aa$ that is not a $\mathsf{P}$-point. It is clear that $\Aa$ is a dense $\CB$ subspace of $\UU$. To see that $\UU$ is not $\CB$, use Theorem \ref{main}.
\end{proof}

\begin{corollary}\label{ctypesundermac}
Assume $\MAc$. Then there exists an ultrafilter with $\cccc$ types of countable dense subsets.
\end{corollary}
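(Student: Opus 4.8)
The plan is to combine the two preceding results, since Corollary~\ref{ctypesundermac} is essentially their conjunction. First I would invoke Corollary~\ref{CBdensenotCB}: assuming $\MAc$, it produces an ultrafilter $\UU$ that is not $\CB$ yet has a dense $\CB$ subspace. Concretely, this $\UU$ is obtained by taking the dense, $\CB$ independent family $\Aa$ furnished by Theorem~\ref{CBdenseindependent} and extending it—via Lemma~\ref{nonP}—to an ultrafilter that fails to be a $\mathsf{P}$-point. Then $\Aa$ witnesses that $\UU$ has a dense $\CB$ subspace, while $\UU$ itself is not $\CB$ because, by Theorem~\ref{main}, an ultrafilter is $\CB$ precisely when it is a $\mathsf{P}$-point.

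With such a $\UU$ in hand, the conclusion is immediate. Theorem~\ref{hrusakvanmillctypes} asserts precisely that any space which is not $\CB$ but contains a dense $\CB$ subspace has $\cccc$ types of countable dense subsets. Applying it with $X=\UU$ and $C=\Aa$ yields that $\UU$ has $\cccc$ types of countable dense subsets, as required. So the body of the proof is just the two sentences ``Let $\UU$ be the ultrafilter given by Corollary~\ref{CBdensenotCB}; apply Theorem~\ref{hrusakvanmillctypes}.''

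I expect no genuine obstacle at the level of this corollary: all the difficulty has already been absorbed into the earlier machinery—namely the $\MAc$ construction of a dense, $\CB$ independent family (Theorem~\ref{CBdenseindependent}, resting on the forcing argument of Lemma~\ref{keyncbdensecb}), the characterization that a non-$\mathsf{P}$-point ultrafilter is not $\CB$ (from Theorem~\ref{main}), and the Hru\v{s}\'ak--van Mill type-counting argument of Theorem~\ref{hrusakvanmillctypes}. The only point worth verifying is that the hypotheses of Theorem~\ref{hrusakvanmillctypes} hold \emph{verbatim} for $\UU$, which they do by the very statement of Corollary~\ref{CBdensenotCB}.
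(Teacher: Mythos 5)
Your proposal is correct and coincides with the paper's own proof: take the ultrafilter $\UU$ from Corollary~\ref{CBdensenotCB} and apply Theorem~\ref{hrusakvanmillctypes}. The unpacking of Corollary~\ref{CBdensenotCB} (via Theorem~\ref{CBdenseindependent}, Lemma~\ref{nonP}, and Theorem~\ref{main}) also matches the paper exactly.
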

\begin{proof}
Let $\UU$ be the ultrafilter given by Corollary \ref{CBdensenotCB}. To see that $\UU$ has $\cccc$ types of countable dense subsets, apply Theorem \ref{hrusakvanmillctypes}.
\end{proof}

Given Theorem \ref{CBdenseindependent}, the following question seems natural. Notice that if the answer to Question \ref{independentquestion} is ``yes'' then the answer to Question \ref{ctypesquestion} is also ``yes'' (see the proof of Corollary \ref{ctypesundermac}).
\begin{question}\label{independentquestion}
Is it possible to construct in $\ZFC$ an independent family that is dense in $2^\omega$ and $\CB$?
\end{question}

\section{An ultrafilter with no non-meager P-subfilters}

By Theorem \ref{Psubfilter}, the statement ``Every non-meager filter contains a non-meager $\mathsf{P}$-subfilter'' is consistent. In this section, we show that the negation of this statement is also consistent (see Corollary \ref{kunenstrange}).

\begin{theorem}
Assume $\Diamond$. Then there exists an ultrafilter $\UU$ such
that whenever $\XX\subseteq\UU$ either there exists a countable subset of $\XX$ with no pseudointersection in $\UU$ or $\XX$ has a pseudointersection.
\end{theorem}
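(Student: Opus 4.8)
The plan is to build $\UU$ by a transfinite recursion of length $\omega_1$ (recall that $\Diamond$ implies $\CH$, so $\cccc=\omega_1$), writing $\UU=\bigcup_{\alpha<\omega_1}\FF_\alpha$ as an increasing union of filters with countable bases, and to verify the displayed dichotomy through its contrapositive: \emph{if $\XX\subseteq\UU$ has no pseudointersection, then some countable $\YY\subseteq\XX$ has no pseudointersection in $\UU$}. I would fix an enumeration $\PP(\omega)=\{a_\alpha:\alpha<\omega_1\}$ to decide membership (making $\UU$ an ultrafilter) and a second enumeration $\{b_\xi:\xi<\omega_1\}$ of $\PP(\omega)$ to code candidate families $\XX$. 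Using $\Diamond$, fix a guessing sequence predicting subsets $S\subseteq\omega_1$; at stage $\alpha$ it produces $\XX_\alpha=\{b_\xi:\xi\in S_\alpha\}$, and a standard closing-off argument yields a club of $\alpha$ on which $\XX_\alpha$ is a countable subset of $\FF_\alpha$ and, for the $\XX$ coded by any fixed $S$, the set $\{\alpha:\XX_\alpha=\XX\cap\FF_\alpha\}$ is stationary. I will call these the \emph{good stages} for $\XX$.

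The combinatorial heart is the following \emph{unbounding lemma}: if $H$ is a filter generated by a countable base and $\YY\subseteq H$ is countable such that no element of $H$ is a pseudointersection of $\YY$, then $H\cup\{\omega\setminus z:z\text{ is a pseudointersection of }\YY\}$ has the finite intersection property. Indeed, a finite union $Z$ of pseudointersections of $\YY$ is again a pseudointersection of $\YY$ (it is infinite and $\subseteq^\ast y$ for every $y\in\YY$), and if some $h\in H$ satisfied $h\subseteq^\ast Z$ then $h\subseteq^\ast y$ for all $y\in\YY$, so $h$ would be a pseudointersection of $\YY$ lying in $H$, contrary to hypothesis; hence $h\setminus Z$ is infinite for all $h\in H$. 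Consequently $H$ extends to an ultrafilter containing every such $\omega\setminus z$, in which $\YY$ has no pseudointersection.

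With this in hand, the recursion step at a good stage $\alpha$ splits into two branches according to the guess $\XX_\alpha$. If no element of $\FF_\alpha$ is a pseudointersection of $\XX_\alpha$, I \emph{commit} to $\XX_\alpha$: from then on, whenever the bookkeeping decides a set $a$ that is a pseudointersection of $\XX_\alpha$, I put $\omega\setminus a$ into $\UU$ (legitimate by the unbounding lemma), so that in the end $\XX_\alpha$ has no pseudointersection in $\UU$. Otherwise I simply record a pseudointersection $g_\alpha\in\FF_\alpha$ of $\XX_\alpha$. Either way the stage finishes with an ordinary step placing $a_\alpha$ or its complement into the filter. Now fix any $\XX\subseteq\UU$ with no pseudointersection and look at its good stages $G$, a stationary set. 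If the commit branch ever occurs at some $\alpha\in G$, then the countable $\XX_\alpha=\XX\cap\FF_\alpha\subseteq\XX$ has no pseudointersection in $\UU$ and we are done. Otherwise the recording branch occurs at every $\alpha\in G$, producing $g_\alpha\in\FF_\alpha=\bigcup_{\beta<\alpha}\FF_\beta$ with $g_\alpha\subseteq^\ast x$ for all $x\in\XX_\alpha$. Since each $g_\alpha$ lies in the countable base accumulated before $\alpha$, it appears at some stage $k(\alpha)<\alpha$; thus $\alpha\mapsto k(\alpha)$ is regressive on $G$, and by Fodor's lemma it is constant on a stationary $G'\subseteq G$, so all the $g_\alpha$ ($\alpha\in G'$) lie in a single countable $\FF_\gamma$ and one value $g$ repeats on a stationary $G''\subseteq G'$. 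As $\alpha$ runs through the cofinal set $G''$ the sets $\XX_\alpha$ exhaust $\XX$, whence $g\subseteq^\ast x$ for every $x\in\XX$; so $g$ is a pseudointersection of $\XX$, contradicting our assumption. Hence the commit branch must occur, proving the dichotomy.

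I expect the main obstacle to be the joint consistency and \emph{persistence} of the commitments. Committing to $\XX_\alpha$ forces $\omega\setminus a\in\UU$ for every pseudointersection $a$ of $\XX_\alpha$, while a later commitment to some $\XX_{\alpha'}$ may try to force $a\in\UU$ for the same $a$ (precisely when $a$ is a pseudointersection of $\XX_\alpha$ and $\omega\setminus a$ is one of $\XX_{\alpha'}$); such a clash would destroy the earlier commitment. The plan is to impose a priority discipline under which a commitment made at stage $\alpha$ is respected at all later stages—so that no pseudointersection of an already committed $\XX_\alpha$ is ever inserted into $\UU$—and to verify, invoking the unbounding lemma at each step, that the running finite intersection property is preserved. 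A secondary point to check is the closing-off argument guaranteeing that, on a club, the good stages genuinely satisfy $\XX_\alpha=\XX\cap\FF_\alpha$ with $\XX_\alpha\subseteq\FF_\alpha$; this is routine but must be set up compatibly with the priority discipline. Once persistence is secured, the verification above goes through verbatim; the resulting ultrafilter then has no non-meager $\mathsf{P}$-subfilter, since any $\mathsf{P}$-subfilter $\GG\subseteq\UU$ falls into the negation of the first alternative and hence, by the theorem, has a pseudointersection, making it meager by Theorem \ref{talagrandcriterion}.
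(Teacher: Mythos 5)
You correctly identified where the difficulty lies, but the obstacle you defer (``joint consistency and persistence of the commitments'') is not a routine check to be discharged later --- it is the mathematical heart of the proof, and your unbounding lemma is too weak to close it. The lemma handles a \emph{single} committed family: with two committed families $\YY_1,\YY_2$, and $z_1$ a pseudointersection of $\YY_1$, $z_2$ a pseudointersection of $\YY_2$, preserving the finite intersection property requires that no $h$ in the current filter satisfy $h\subseteq^\ast z_1\cup z_2$; but $z_1\cup z_2$ is a pseudointersection of \emph{neither} family, so this does not follow from ``no member of $H$ is a pseudointersection of $\YY_1$ or of $\YY_2$,'' and indeed $h=z_1\cup z_2$ may already lie in the filter. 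Consequently no priority discipline can honor all commitments made by your rule: the clash is not between individual insertions but between the cumulative effect of several commitments. The paper's solution is to strengthen the invariant from ``avoid pseudointersections of committed families'' to ``avoid the ideal $\II_\xi$'' of all sets $z$ such that $z\subseteq^\ast w_0\cup\cdots\cup w_{k-1}$ for \emph{every} choice of $w_i\in\XX_{\eta_i}$ over finitely many committed stages $\eta_i\in P_\xi$ (condition $(4)$ of its construction); this ideal is closed under exactly the finite unions that defeat your lemma.

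Once the invariant is strengthened, a commitment can be \emph{blocked}: at a good stage $\alpha$ it may happen that $\FF_\alpha$ contains no pseudointersection of $\XX_\alpha$, and yet committing to $\XX_\alpha$ would violate the invariant because of earlier commitments. Your verification has no branch for this case: your dichotomy at a good stage is ``commit, or record $g_\alpha\in\FF_\alpha$ with $g_\alpha\subseteq^\ast x$ for all $x\in\XX_\alpha$,'' and at a blocked stage neither alternative is available, so the Fodor argument that follows has nothing to run on. The paper handles blocked stages by a finer extraction: for each guessing stage $\eta$ not placed in $P$, there are $z\in\UU_\eta$, $k\in\omega$ and $\sigma=\{\eta_0,\dots,\eta_{k-1}\}\in[P_\eta]^k$ with $z\subseteq^\ast w_0\cup\cdots\cup w_{k-1}\cup w$ for all $(w_0,\dots,w_{k-1},w)\in\XX_{\eta_0}\times\cdots\times\XX_{\eta_{k-1}}\times\XX_\eta$; pressing-down plus pigeonhole stabilizes $(z,k,\sigma)$ on an uncountable set, and the invariant $z\notin\II$ yields an infinite $x=z\setminus(w_0\cup\cdots\cup w_{k-1})$ with $x\subseteq^\ast w$ for every $w\in\XX$. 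Note that this $x$ need not belong to $\UU$ --- which is precisely why the theorem's second alternative asserts only that $\XX$ has a pseudointersection, not one in $\UU$; your argument, which looks for the pseudointersection inside $\bigcup_\alpha\FF_\alpha$, is implicitly aiming at a stronger conclusion that fails in the blocked case. In short: your skeleton (diamond guessing, commitments, stationary-set verification with pressing-down) matches the paper's, but the two missing ideas --- the ideal-avoidance invariant in place of the single-family unbounding lemma, and the analysis of blocked stages producing a pseudointersection possibly outside $\UU$ --- are the substance of the proof.
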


\begin{proof}
Let $\PP(\omega)=\{z_\xi:\xi\in\omega_1\}$ be an enumeration such that the
following conditions hold, where $\Lim=\{\xi\in\omega_1:\xi\text{ is a limit
ordinal}\}$.
\begin{itemize}
\item $\BB_\xi=\{z_\eta:\eta\in\xi\}$ is a Boolean subalgebra of $\PP(\omega)$ for every $\xi\in\Lim$.
\item $\BB_\omega=\FIN\cup\COF$.
\end{itemize}
By $\Diamond$, there exists a sequence $\langle\XX_\xi:\xi\in\Lim\rangle$ such
that $\XX_\xi\subseteq\BB_\xi$ for every $\xi\in\Lim$ and whenever
$\XX\subseteq\PP(\omega)$ the set $\{\xi\in\Lim:\XX_\xi=\XX\cap\BB_\xi\}$ is
stationary in $\omega_1$.

We will construct $\UU_\xi\subseteq\BB_\xi$, and $P_\xi\subseteq\Lim\cap\xi$ for
every $\xi\in\Lim$ by transfinite recursion. In the end, set
$\UU=\bigcup_{\xi\in\Lim}\UU_\xi$ and $P=\bigcup_{\xi\in\Lim}P_\xi$. Also define
the ideal
$$
\II_\xi=\{z\subseteq\omega:\text{there exists }k\in\omega\text{ and
}\{\eta_0,\ldots,\eta_{k-1}\}\in [P_\xi]^k\text{ such that }
$$
$$
z\subseteq^\ast w_0\cup\cdots\cup w_{k-1}\text{ whenever
}(w_0,\ldots,w_{k-1})\in\XX_{\eta_0}\times\cdots\times\XX_{\eta_{k-1}}\}
$$
for every $\xi\in\Lim$. By induction, we will make sure that the following
requirements are satisfied.
\begin{enumerate}
\item\label{ultraonA} $\UU_\xi$ is an ultrafilter on $\BB_\xi$ for every
$\xi\in\Lim$.
\item $\UU_\eta\subseteq\UU_\xi$ whenever $\eta,\xi\in\Lim$ and $\eta\leq\xi$.
\item $P_\eta\subseteq P_\xi$ whenever $\eta,\xi\in\Lim$ and $\eta\leq\xi$.
\item\label{promise} $\UU_\xi\cap \II_\xi=\varnothing$ for every $\xi\in\Lim$.
\end{enumerate}

Start by setting $\UU_\omega=\COF$ and $P_\omega=\varnothing$. Observe that $\II_\omega=\FIN$. Let
$\xi\in\Lim$ be such that $\xi>\omega$, and assume that $\UU_\eta$ and $P_\eta$
are given for every limit $\eta<\xi$. If $\xi$ is a limit of limit ordinals, let $\UU_\xi=\bigcup_{\eta\in\Lim\cap\xi}\UU_\eta$ and
$P_\xi=\bigcup_{\eta\in\Lim\cap\xi}P_\eta$. Now assume that $\xi=\eta+\omega$
for some $\eta\in\Lim$. First define $P_\xi=P_\eta\cup\{\eta\}$, if doing so
does not violate condition $(\ref{promise})$. Otherwise, let $P_\xi=P_\eta$.
Then extend $\UU_\eta$ to an ultrafilter $\UU_\xi$ on $\BB_\xi$, by deciding
whether $z_{\eta+n}$ belongs to $\UU_\xi$ for every $n\in\omega$ so that
condition $(\ref{promise})$ is preserved. This concludes the construction.

To verify that $\UU$ has the desired properties, let $\XX\subseteq\UU$. Then the
set
$$
S=\{\eta\in\Lim:\XX\cap\BB_\eta=\XX_\eta\}
$$
is stationary in $\omega_1$. If $S\cap P\neq\varnothing$, say $\eta\in S\cap P$,
then condition $(\ref{promise})$ guarantees that $\XX_\eta\subseteq\XX$ has no
pseudointersection in $\UU$.

So assume that $S\cap P=\varnothing$. Then, for each $\eta\in S$, there must be
$z\in\UU_\eta$, $k\in\omega$ and $\sigma=\{\eta_0,\ldots,\eta_{k-1}\}\in
[P_\eta]^k$ such that
$$
z\subseteq^\ast w_0\cup\cdots\cup w_{k-1}\cup w
$$ 
whenever
$(w_0,\ldots,w_{k-1},w)\in\XX_{\eta_0}\times\cdots\times\XX_{\eta_{k-1}}
\times\XX_\eta$. By the Pressing-Down Lemma and the Pigeonhole Principle, there exists an uncountable $S'\subseteq S$ such that the same $z$, $k$ and $\sigma$ work for every $\eta\in S'$. Fix such $S'$, $z$, $k$ and $\sigma$. By condition (\ref{promise}), there exists $(w_0,\ldots,w_{k-1})\in\XX_{\eta_0}\times\cdots\times\XX_{\eta_{k-1}}$ such that $x=z\setminus (w_0\cup\cdots\cup w_{k-1})$ is infinite. Using the fact that $S'\subseteq S$ is uncountable, it is easy to check that $x\subseteq^\ast w$ for every $w\in\XX$.
\end{proof}

\begin{corollary}\label{kunenstrange}
Assume $\Diamond$. Then there exists an ultrafilter $\UU$ such
that whenever $\FF$ is a subfilter of $\UU$, either $\FF$ is meager or $\FF$ is not a $\mathsf{P}$-filter.
\end{corollary}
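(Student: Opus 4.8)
The plan is to take the ultrafilter $\UU$ produced by the preceding theorem and show directly that it has \emph{no} non-meager $\mathsf{P}$-subfilter, which is exactly the assertion of the corollary. So I would suppose, toward a contradiction, that $\FF$ is a subfilter of $\UU$ that is simultaneously non-meager and a $\mathsf{P}$-filter. Since $\FF\subseteq\UU$, I can feed $\XX=\FF$ into the dichotomy supplied by the theorem, obtaining one of its two alternatives.

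First I would rule out the alternative that some countable $\XX_0\subseteq\FF$ has no pseudointersection in $\UU$. Indeed, because $\FF$ is a $\mathsf{P}$-filter, such an $\XX_0$ has a pseudointersection $z\in\FF$; and since $\FF\subseteq\UU$, this same $z$ is a pseudointersection of $\XX_0$ lying in $\UU$, contradicting the first alternative. Hence the theorem forces the second alternative: the whole filter $\FF$ has a pseudointersection, i.e.\ there is an infinite $z\subseteq\omega$ with $z\subseteq^\ast x$ for every $x\in\FF$.

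The remaining step, and the only one carrying any content, is to observe that a filter admitting a \emph{full} pseudointersection must be meager, contradicting the non-meagerness of $\FF$. I would prove this by noting that $\FF\subseteq\{x\subseteq\omega:z\subseteq^\ast x\}=\bigcup_{n\in\omega}(z\setminus n)\!\uparrow$, an $F_\sigma$ set, and that each $(z\setminus n)\!\uparrow$ is nowhere dense in $2^\omega$: given any $s\in 2^{<\omega}$, one can extend $s$ to some $s'$ that assigns the value $0$ to a coordinate in the infinite set $z\setminus n$, whence $[s']\cap (z\setminus n)\!\uparrow\,=\varnothing$. Thus $\FF$ is contained in a meager set and so is itself meager. (Alternatively, meagerness follows from Theorem \ref{talagrandcriterion} by partitioning $\omega$ into finite blocks each containing exactly one element of $z$, since every $x\in\FF$ then meets all but finitely many blocks.)

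I do not expect any real obstacle: the theorem was engineered precisely so that its dichotomy, combined with the two elementary facts above (a $\mathsf{P}$-filter retains its pseudointersections inside itself, and a filter with a full pseudointersection is meager), yields the corollary at once. The only point requiring a moment's care is ensuring that the pseudointersection furnished by the $\mathsf{P}$-filter property lives in $\UU$ rather than merely in $\FF$, but this is immediate from $\FF\subseteq\UU$.
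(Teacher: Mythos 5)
Your proposal is correct and is exactly the derivation the paper intends: the corollary is stated without proof as an immediate consequence of the preceding theorem, via precisely your dichotomy argument (the $\mathsf{P}$-filter property rules out the first alternative since $\FF\subseteq\UU$, and a full pseudointersection $z$ forces $\FF\subseteq\bigcup_{n\in\omega}(z\setminus n)\!\uparrow$, a meager set). Both of your justifications for the final meagerness step, the direct nowhere-density computation and the appeal to Theorem \ref{talagrandcriterion}, are sound.
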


Whenever one proves that a certain statement is a consequence of $\Diamond$, it is natural to wonder whether the same statement follows simply from $\CH$. The following question is a particular instance of this general principle.
\begin{question}
Can the assumption of $\Diamond$ be weakened to $\CH$ in Corollary \ref{kunenstrange}?
\end{question}

\section{Acknowledgements}

The authors would like to acknowledge an early contribution due to David Milovich. In fact, long before we obtained Theorem \ref{main}, he was able to show that an ultrafilter that is $\RCDH$ must be a $\mathsf{P}$-point (private communication to the second-listed author). They also thank the anonymous referee for several valuable suggestions.


\begin{thebibliography}{99}

\bibitem[AvM]{arkhangelskiivanmill}\textsc{A. V. Arkhangel$'$ski{\u\i}, J. van Mill.} Topological homogeneity. \emph{Recent Progress in General Topology III.} Atlantis Press, 2014. 1--68.

\bibitem[BZ]{banakhzdomskyy}\textsc{T. Banakh, L. Zdomskyy.} \emph{Coherence of Semifilters.} In preparation. Available at\\\verb"http://www.franko.lviv.ua/faculty/mechmat/Departments/Topology/booksite.html".

\bibitem[BJ]{bartoszynskijudah}\textsc{T. Bartoszy\'{n}ski, H. Judah (J. Ihoda).} \emph{Set theory. On the structure of the real line.} A K Peters, Ltd., Wellesley, MA, 1995.

\bibitem[Bl]{blass}\textsc{A. Blass.} Combinatorial cardinal
characteristics of the continuum. \emph{Handbook of Set Theory, vol. 1.} Springer, Dordrecht, 2010, 395--489.

\bibitem[BvMS]{brianvanmillsuabedissen}\textsc{W. Brian, J. van Mill, R. Suabedissen.} Homogeneity and generalizations of 2-point
sets. To appear in \emph{Houston J. Math.}

\bibitem[FZ]{fitzpatrickzhou}\textsc{B. Fitzpatrick Jr., H. X. Zhou.} Countable dense homogeneity and the Baire property. \emph{Topology Appl.} \textbf{43:1} (1992), 1--14.

\bibitem[HH]{hernandezgutierrezhrusak}\textsc{R. Hern\'andez-Guti\'errez, M. Hru\v{s}\'ak.} Non-meager $P$-filters are countable dense homogeneous. \emph{Colloq. Math.} \textbf{130:2} (2013), 281--289.

\bibitem[HHvM]{hernandezgutierrezhrusakvanmill}\textsc{R. Hern\'andez-Guti\'errez, M. Hru\v{s}\'ak, J. van Mill.} Countable dense homogeneity and $\lambda$-sets. \emph{Fund. Math.} \textbf{226:2} (2014), 157--172.

\bibitem[HvM]{hrusakvanmill}\textsc{M. Hru\v{s}\'ak, J. van Mill.} Nearly countable dense homogeneous spaces. \emph{Canad. J. Math.} \textbf{66:4} (2014), 743--758.

\bibitem[HZ]{hrusakzamoraaviles}\textsc{M. Hru\v{s}\'ak, B. Zamora Avil\'es.} Countable dense homogeneity of definable spaces. \emph{Proc. Amer. Math. Soc.} \textbf{133:11} (2005), 3429--3435.

\bibitem[JMPS]{justmathiasprikrysimon}\textsc{W. Just, A. R. D. Mathias, K. Prikry, P. Simon.} On the existence of large p-ideals. \emph{J. Symbolic Logic.} \textbf{55:2} (1990), 457--465.

\bibitem[Ku1]{kunenweak}\textsc{K. Kunen.} Weak $P$-points in ${\bf N}\sp{\ast}$. \emph{Topology, Vol. II (Proc. Fourth Colloq., Budapest, 1978).} Colloq. Math. Soc. J\'anos Bolyai, 23. North-Holland, Amsterdam-New York, 1980. 741--749.

\bibitem[Ku2]{kunen}\textsc{K. Kunen.} \emph{Set theory.} Studies in Logic (London), 34. College Publications, London, 2011.

\bibitem[Ma]{marciszewski}\textsc{W. Marciszewski.} $P$-filters and hereditary Baire function spaces. \emph{Topology Appl.} \textbf{89:3} (1998), 241--247.

\bibitem[MS]{mazurkiewicz}\textsc{S. Mazurkiewicz, W. Sierpi\'{n}ski.}
Contribution \`{a} la topologie des ensembles d\'{e}nombrables. \emph{Fund. Math.} \textbf{1:1} (1920), 17--27.

\bibitem[MvMZ]{medinivanmillzdomskyy}\textsc{A. Medini, J. van Mill, L. Zdomskyy.} A homogeneous space whose complement is rigid. Preprint (2014). Available at \verb"http://arxiv.org/abs/1410.0559".

\bibitem[MM]{medinimilovich}\textsc{A. Medini, D. Milovich.} The topology of ultrafilters as subspaces of $2^\omega$. \emph{Topology Appl.}  \textbf{159:5} (2012), 1318--1333.

\bibitem[MZ]{medinizdomskyy}\textsc{A. Medini, L. Zdomskyy.} Between Polish and completely Baire. Preprint (2014). Available at \verb"http://arxiv.org/abs/1405.7899".

\bibitem[vM1]{vanmills}\textsc{J. van Mill.} Strong local homogeneity does not imply countable dense homogeneity. \emph{Proc. Amer. Math. Soc.} \textbf{84:1} (1982), 143--148.

\bibitem[vM2]{vanmilli}\textsc{J. van Mill.} \emph{The infinite-dimensional topology of function spaces.} North-Holland Mathematical Library, 64. North-Holland Publishing Co., Amsterdam, 2001.

\bibitem[Mi1]{millerf}\textsc{A. W. Miller.} Rational perfect set forcing.
\emph{Axiomatic set theory (Boulder, Colo., 1983). Contemp. Math.} \textbf{31} (1984), 143--159. Amer. Math. Soc., Providence, RI.

\bibitem[Mi2]{millers}\textsc{A. W. Miller.} Ultrafilters with property (s).
\emph{Proc. Amer. Math. Soc.} \textbf{137:9} (2009), 3115--3121.

\bibitem[RZZ]{repovszdomskyyzhang}\textsc{D. Repov\v{s}, L. Zdomskyy, S. Zhang.} Countable dense homogeneous filters and the Menger covering property. \emph{Fund. Math.} \textbf{224:3} (2014), 233--240.

\bibitem[Ta]{talagrand}\textsc{M. Talagrand.} Compacts de fonctions mesurables et filtres non mesurables. \emph{Studia Math.} \textbf{67:1} (1980), 13--43.

\end{thebibliography}
\end{document}